\numberwithin{equation}{section}
\numberwithin{figure}{section}
\newtheorem{lemma}{Lemma}[section]
\newtheorem{theorem}{Theorem}[section]
\newtheorem{proposition}{Proposition}[section]
\newtheorem{corollary}[lemma]{Corollary}
\theoremstyle{definition}
\newtheorem{definition}{Definition}[section]
\newtheorem{remark}{Remark}[section]
\newcommand{\R}{\mathbb{R}}
\newcommand{\N}{\mathbb{N}}
\newcommand{\e}{\operatorname{e}}
\newcommand{\modu}{\operatorname{mod}}
\newcommand{\norm}[1]{\left\lVert#1\right\rVert}
\newcommand{\mycomment}[1]{}
\begin{document}
\title[Exponential sums over M\"{o}bius functions with partitions]{Exponential sums over M\"{o}bius convolutions with applications to partitions}
%%%%%%%%%%%%%%%
 \author[D.~Basak]{Debmalya Basak}
\address{Debmalya Basak: Department of Mathematics, University of Illinois, 1409 West Green Street, Urbana, IL 61801, USA}
\email{dbasak2@illinois.edu}
\author[N.~Robles]{Nicolas Robles}
\address{Nicolas Robles: RAND Corporation, Engineering and Applied Sciences, 1200 S Hayes, Arlington, VA 22202, USA}
\email{nrobles@rand.org}
\author[A.~Zaharescu]{Alexandru Zaharescu}
\address{Alexandru Zaharescu: Department of Mathematics, University of Illinois, 1409 West Green
Street, Urbana, IL 61801, USA; \textnormal{and} Institute of Mathematics of the Romanian Academy, P.O. BOX 1-764, Bucharest, Ro-70700, Romania}
\email{zaharesc@illinois.edu}
%%%%%%%%%%%%%%%
\subjclass[2020]{Primary: 11L07, 11P55. Secondary: 11L03, 11P82. \\ \indent \textit{Keywords and phrases}: exponential sums, M\"{o}bius convolutions, Hardy-Littlewood circle method, weighted partitions, Riemann zeta-function.}
\maketitle
%%%%%%%%%%%%%%%%%%%%%%%%%%%%%%%%%%%%%%%%%%%%%%%%%%%%%%%%%%%%%%%%%%%%%%%%%%%%%%%%%%%%%%%%%%%%%%%%%
\begin{abstract}
We consider partitions $p_{w}(n)$ of a positive integer $n$ arising from the generating functions
\[\sum_{n=1}^\infty p_{w}(n) z^n =  \prod_{m \in \mathbb{N}} (1-z^m)^{-w(m)},
\]
where the weights $w(m)$ are M\"{o}bius convolutions. We establish an upper bound for $p_w(n)$ and, as a consequence, we obtain an asymptotic formula involving the number of odd and even partitions emerging from the weights. In order to achieve the desired bounds on the minor arcs resulting from the Hardy-Littlewood circle method, we establish bounds on exponential sums twisted by M\"{o}bius convolutions. Lastly, we provide an explicit formula relating the contributions from the major arcs with a sum over the zeros of the Riemann zeta-function. 
\end{abstract}
%%%%%%%%%%%%%%%%%%%%%%%%%%%%%%%%%%%%%%%%%%%%%%%%%%%%%%%%%%%%%%%%%%%%%%%%%%%%%%%%%%%%%%%%%%%%%%%%%
\section{Introduction} \label{sec:introduction}
\subsection{Motivation}\label{Motivation}
A partition of a positive integer $n$ is a non-decreasing sequence of positive integers whose sum is equal to $n$. The partition function $p(n)$, which represents the number of partitions of $n$, was first studied by Hardy and Ramanujan \cite{hardyramanujan} in 1918 by applying the Hardy-Littlewood circle method. Their result states that
\begin{align*}
    p(n) \sim \frac{1}{4n\sqrt{3}} \exp \bigg(\pi\sqrt{\frac{2n}{3}}\bigg) \quad \textnormal{as} \quad n \to \infty.
\end{align*}
We consider a question about partitions of a positive integer that has a strong arithmetic flavor. To be precise, we look at the parts that appear in the partition and more specifically, we consider the prime factorization of the parts. Let $B$ (blue) and $R$ (red) denote the set of squarefree numbers that have an even, and respectively, an odd number of prime factors. For the convenience of the reader, we use colors to show parity of these partitions. Thus, we write
\begin{align*}
B &= \{ \textcolor{blue}{1, 6, 10, 14, 15, 21, 22, 26,\dots} \}, \nonumber \\
R &= \{ \textcolor{red}{2, 3, 5, 7, 11, 13, 17, 19, 23, 29, 30, \dots}  \}.
\end{align*}
We note in passing that the sets $B$ and $R$ are asymptotically just as numerous. Actually, it is expected that as $x$ tends to infinity, the number of elements of $B$ less than $x$ differs from the number of elements of $R$ less than $x$ by at most $O_{\epsilon} (x^{1/2 + \epsilon})$ for any $\epsilon>0$. As is well-known, this is equivalent to the Riemann hypothesis.

\begin{definition}
We call a partition of a positive integer $n$ \textit{admissible} if each part is either blue or red and each red part appears at most once. We say that an admissible partition is even (respectively odd) if the number of red parts used in the partitions is even (and respectively odd).
\end{definition}
 For example, there are exactly 15 admissible partitions in the case when $n=10$. The partition
\begin{align*}
    10 = \textcolor{blue}{6} + \textcolor{red}{2} + \textcolor{red}{2} 
\end{align*}
is not admissible because the red 2 appears twice. We see in Table \ref{Tab:Tcr} that there are $8$ even admissible partitions and $7$ odd admissible partitions in this situation. This naturally leads to the following question.

\begin{table}[ht]
\begin{tabular}{@{}c@{}|l|c|r|}
\toprule
& Case for $n=10 \textrm{ }$ 
& Admissible partitions
\\ \midrule
\savecellbox{\rule{0pt}{100pt}}
& \savecellbox{Odd admissible partitions}
& \savecellbox{$\begin{aligned}[t] % <-- note the 't' ("top") placement specifier
10 &= \textcolor{red}{7} + \textcolor{blue}{1} + \textcolor{blue}{1} + \textcolor{blue}{1}\\
10 &= \textcolor{blue}{6} + \textcolor{red}{3} + \textcolor{blue}{1}    \\ 
10 &= \textcolor{blue}{6} + \textcolor{red}{2} + \textcolor{blue}{1} + \textcolor{blue}{1}   \\
10 &= \textcolor{red}{5} + \textcolor{red}{3} + \textcolor{red}{2}  \\
10 &= \textcolor{red}{5} + \textcolor{blue}{1} + \textcolor{blue}{1} + \textcolor{blue}{1} + \textcolor{blue}{1} + \textcolor{blue}{1}  \\
10 &= \textcolor{red}{3} + \textcolor{blue}{1} + \textcolor{blue}{1} + \textcolor{blue}{1} + \textcolor{blue}{1} + \textcolor{blue}{1} + \textcolor{blue}{1} + \textcolor{blue}{1}  \\
10 &= \textcolor{red}{2} + \textcolor{blue}{1} + \textcolor{blue}{1} + \textcolor{blue}{1} + \textcolor{blue}{1} + \textcolor{blue}{1} + \textcolor{blue}{1} + \textcolor{blue}{1} + \textcolor{blue}{1}
\end{aligned} $}
\\[-\rowheight]
\printcellmiddle
& \printcellmiddle
& \printcellmiddle
%& \printcellmiddle
%\\ \bottomrule
\\ \midrule
\savecellbox{\rule{0pt}{115pt}}
& \savecellbox{Even admissible partitions}
& \savecellbox{$\begin{aligned}[t] % <-- note the 't' ("top") placement specifier
\quad \; \; 10 &= \textcolor{blue}{1} + \textcolor{blue}{1} + \textcolor{blue}{1} + \textcolor{blue}{1} + \textcolor{blue}{1} + \textcolor{blue}{1} + \textcolor{blue}{1} + \textcolor{blue}{1} + \textcolor{blue}{1} + \textcolor{blue}{1}  \\ 
10 &= \textcolor{red}{3} + \textcolor{red}{2} + \textcolor{blue}{1} + \textcolor{blue}{1} + \textcolor{blue}{1} + \textcolor{blue}{1} + \textcolor{blue}{1}   \\ 
10 &= \textcolor{red}{5} + \textcolor{red}{2} + \textcolor{blue}{1} + \textcolor{blue}{1} + \textcolor{blue}{1}  \\
10 &= \textcolor{red}{5} + \textcolor{red}{3} + \textcolor{blue}{1} + \textcolor{blue}{1} \\
10 &= \textcolor{blue}{6} + \textcolor{blue}{1} + \textcolor{blue}{1} + \textcolor{blue}{1} + \textcolor{blue}{1} \\
10 &= \textcolor{red}{7} + \textcolor{red}{2} + \textcolor{blue}{1} \\
10 &= \textcolor{red}{7} + \textcolor{red}{3} \\
10 &= \textcolor{blue}{10} \\
\end{aligned} $}
\\[-\rowheight]
\printcellmiddle
& \printcellmiddle
& \printcellmiddle
\\ \bottomrule
\end{tabular}
\label{Tab:Tcr}
\end{table}
%%%%%%%%%%

\noindent \textbf{Question:} \textit{Is the number of even admissible partitions of $n$ asymptotic to the number of odd admissible partitions of $n$ as $n$ tends to infinity?}\\

In this paper, our primary objective is to answer this question. In order to do so, we will bring into play the M\"{o}bius function. Actually, we will establish results for a more general version of this problem, where the M\"{o}bius function is replaced by M\"{o}bius convolutions. In the process, we are naturally led to prove results on exponential sums involving M\"{o}bius convolutions and related arithmetic functions. This is the other main goal of this article and it is of independent interest.

\subsection{Survey of previous results} 
In \cite{vaughanprimes}, Vaughan established an asymptotic formula for the number of partitions
of a number $n$ into primes when $n$ is large, by applying the Hardy-Littlewood circle method. In \cite{vaughansquares}, Vaughan then extended this approach to study partition asymptotics regarding squares and other additive questions.

Since the publication of \cite{vaughanprimes, vaughansquares}, there has been considerable interest in the study of restricted or weighted partitions over arithmetically interesting sets. In \cite{gafnipowers}, Gafni showed an asymptotic formula for the number of partitions of $n$ into $k$-th powers, and in \cite{gafniprimepowers}, she proved results for partitions into prime powers. Berndt, Malik and the third author \cite{bmz} studied partitions into $k$-th powers in arithmetic progressions. Dunn and the second author \cite{dunnrobles} then considered the more general case where the parts are values of an arbitrary polynomial $f$ of degree $d \ge 2$. More recently, Das, Berndt, Zeindler, and the second and third authors \cite{brzz, drzz, RoblesZeindler} studied partitions weighted by generalized divisor functions, partitions into semiprimes and almost primes. General strategies for dealing with asymptotic weighted partitions have been established by Meinardus  \cite{meinardus} and more recently by Debryune and Tenenbaum \cite{debruynetenenbaum}. Notice that in the case of the above results, the corresponding partitions have non-negative weights, unlike the question discussed in Section \ref{Motivation}. For such questions, where the weights take positive and negative values (see Figure \ref{fig:PsiMuMu}), one should be prepared in principle to have a main result which is an upper bound rather than an asymptotic formula. This indeed happens in our case, see Theorem \ref{Partition} below.

To proceed, we start by considering the partition function $p_{w}(n)$ associated to a generating function of the form,
\begin{align} \label{eq:introAux01}
\Psi_{w}(z) = \sum_{n=1}^\infty p_{w}(n) z^n = \prod_{n \in \N} (1-z^n)^{-w(n)}, \quad \lvert z \rvert <1,
\end{align}
where $w(n) \in \mathbb{Z}$ is the weight placed on each element $n \in \N$. We refer to such partitions as \textit{weighted} partitions. Consider the M\"{o}bius function denoted by $\mu$, which is a multiplicative function defined in the following way: for each prime $p$,
\[\mu(p^k) := \begin{cases}
  -1,  & k=1 \\
  0, & k \geqslant 2.
\end{cases}\]
By definition, $\mu^{2}(n)=1$ if $n$ is squarefree and $\mu^{2}(n)=0$ otherwise. Our cases of interest will have weights given by the M\"{o}bius function, i.e., $w(n)=\mu(n)$, or more generally by convolutions of the M\"{o}bius function, i.e., $w(n) = (\mu * \mu * \cdots * \mu)(n)$, where the Dirichlet convolution is performed $k$ times. 

It is easy to see that in the particular case when $w(n)= \mu(n)$, the partition function $p_{\mu}(n)$ is exactly the difference between the number of even admissible partitions and odd admissible partitions of $n$ as defined in Section \ref{Motivation}. More precisely, suppose we denote by $\mathcal{A}(n)$ the number of admissible partitions of $n$. Additionally, let $\mathcal{E}(n)$ and $\mathcal{O}(n)$ be the number of even and odd admissible partitions of $n$ respectively. Then
\begin{align}\label{difference}
p_{\mu}(n) &= \mathcal{E}(n)-\mathcal{O}(n),\\
\textrm{and} \quad \mathcal{A}(n) &= \mathcal{E}(n)+\mathcal{O}(n).
\end{align}
Therefore, in order to show that the number of even admissible partitions is asymptotic to the number of odd admissible partitions as $n$ tends to infinity, our goal in what follows is to provide an upper bound for their difference, i.e., for $p_{\mu}(n)$. 

Very recently, in \cite{daniels1, daniels2}, Daniels investigated partitions whose generating functions are given by
\begin{align} \label{eq:introAux02}
    \Psi(z,f) = 1 + \sum_{n=1} p(n,f) z^n = \prod_{n=1}^\infty (1- f(n)z^n)^{-1}, \quad \lvert z \rvert <1. 
\end{align}
Daniels refers to such partitions as \textit{signed} partitions. In particular, he studied the case when $f(n) = \mu(n)$. Note the difference between placing $\mu(n)$ in the exponent of the factor $(1-z^n)^{-\mu(n)}$ in \eqref{eq:introAux01} as opposed to the presence of $\mu(n)$ inside the factor $(1-\mu(n)z^n)^{-1}$ in \eqref{eq:introAux02}. This is a subtle difference, which surprisingly leads to the existence of main terms in Daniel's works and the non-existence of main terms in our context. For comparison, the reader is referred to Theorem 1.1 in \cite{daniels1} and Theorem \ref{Partition} below. 
We remark that Theorem \ref{Partition} below does lead to an asymptotic formula connecting the number of odd and even admissible partitions from Section \ref{Motivation}.  
\subsection{Main Results} We start by answering the question raised in Section \ref{Motivation}.

\begin{theorem}\label{asymptotic}
The number of odd admissible partitions of $n$ is asymptotic to the number of even admissible partitions of $n$, as $n$ tends to infinity.
\end{theorem}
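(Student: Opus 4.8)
The plan is to deduce Theorem~\ref{asymptotic} from the upper bound on $p_\mu(n)$ given by Theorem~\ref{Partition}, combined with an easy lower bound for the number $\mathcal{A}(n)$ of admissible partitions. Starting from the identities $\mathcal{E}(n)-\mathcal{O}(n)=p_\mu(n)$ and $\mathcal{E}(n)+\mathcal{O}(n)=\mathcal{A}(n)$ of \eqref{difference}, one gets $\mathcal{O}(n)=\tfrac12(\mathcal{A}(n)-p_\mu(n))$ and
\[
\frac{\mathcal{E}(n)}{\mathcal{O}(n)}=\frac{1+p_\mu(n)/\mathcal{A}(n)}{1-p_\mu(n)/\mathcal{A}(n)},
\]
so Theorem~\ref{asymptotic} is equivalent to the single estimate $p_\mu(n)=o(\mathcal{A}(n))$ as $n\to\infty$; this also forces $\mathcal{O}(n)>0$ for all large $n$, so that the ratio is meaningful.

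For the lower bound on $\mathcal{A}(n)$ I would simply discard the red parts. Since
\[
\sum_{n\ge 0}\mathcal{A}(n)z^n=\prod_{b\in B}\frac{1}{1-z^b}\prod_{r\in R}(1+z^r)
\]
and the second product has nonnegative Taylor coefficients and constant term $1$, one has $\mathcal{A}(n)\ge p_B(n)$ coefficientwise, where $p_B(n)$ is the number of partitions of $n$ into blue parts. The set $B=\{n\in\N:\mu(n)=1\}$ has positive density, because $\#\{b\in B: b\le x\}=\tfrac12\big(\sum_{n\le x}\mu^2(n)+\sum_{n\le x}\mu(n)\big)\sim\tfrac{3}{\pi^2}x$ by the prime number theorem. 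Hence, for large $n$, the set $S:=B\cap[1,\lfloor\sqrt n\rfloor]$ has $\#S\ge\tfrac{1}{\pi^2}\sqrt n$ and $\sum_{s\in S}s\le n$; since $1\in B$, each subset $T\subseteq S\setminus\{1\}$ yields a distinct partition of $n$ into blue parts (the elements of $T$, each once, together with $n-\sum_{t\in T}t$ ones), so $\mathcal{A}(n)\ge p_B(n)\ge 2^{\#S-1}\ge\exp(c\sqrt n)$ for an absolute $c>0$ and all large $n$. (If a sharper constant were needed, a Meinardus-type asymptotic gives $\log p_B(n)\sim\sqrt{2n}$.)

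It then remains to check that the bound of Theorem~\ref{Partition} for $w=\mu$ is of the form $\exp(o(\sqrt n))$; granting this, it is dominated by the lower bound for $\mathcal{A}(n)$, whence $p_\mu(n)=o(\mathcal{A}(n))$ and Theorem~\ref{asymptotic} follows. The substantive work lies entirely in Theorem~\ref{Partition}: in the minor-arc bounds for exponential sums twisted by $\mu$, and in the major-arc analysis, where the usual main term collapses owing to the cancellation in $\sum_{n}\mu(n)e(n\alpha)$ near $\alpha=0$ and the surviving contribution is governed by a sum over the zeros of $\zeta$, as recorded in the abstract; this is exactly why the exponent of the bound is $o(\sqrt n)$ rather than comparable to $\sqrt n$. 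Once Theorem~\ref{Partition} is available, the argument above is routine book-keeping.
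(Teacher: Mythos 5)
Your proposal is correct and takes essentially the same route as the paper, which likewise deduces Theorem \ref{asymptotic} directly from the identity \eqref{difference} together with the $k=1$ bound \eqref{eq 1.7} accompanying Theorem \ref{Partition}. The only difference is that you make explicit the elementary lower bound $\mathcal{A}(n)\geqslant\exp\left(c\sqrt{n}\right)$ (via distinct blue parts up to $\sqrt{n}$ padded with ones), a step the paper leaves implicit, and your treatment of it is sound.
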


More generally, for any positive integer $k$, we define the Popovici-M\"{o}bius function (see \cite[$\mathsection$2.2]{handbook2} as well as \cite{{CZ1991}} and \cite{KuhnRobles, RR2021} for related applications) with parameter $k$ by
\begin{align}\label{convolution defn}
    \mu_k(n) := (\mu * \mu * \cdots * \mu)(n),
\end{align}
where the Dirichlet convolution is performed $k$ times. The Dirichlet series for $\mu_k(n)$ is given by
\begin{equation*}
    \sum_{n=1}^\infty \frac{\mu_k(n)}{n^s} = \frac{1}{\zeta(s)^k},
\end{equation*}
for $\operatorname{Re}(s) > 1$. Let $p_{\mu_k}(n)$ be the number of partitions of $n$ weighted by $\mu_k(n)$, that is, 
\begin{align*}
    \sum_{n=1}^\infty p_{\mu_k}(n) z^n = \prod_{n=1}^\infty (1-z^n)^{-\mu_k(n)}.
\end{align*}
In the present paper, we will establish results on exponential sums twisted by M\"{o}bius convolutions for any positive integer $k$. Concerning applications to partitions, for simplicity, we will focus only on the case when $k=2$. This is more involved than the case when $k=1$, and already demonstrates all the ingredients necessary for a general $k$. For $k=2$, we prove the following result. 
\begin{theorem}\label{Partition}
For any fixed $B>0$ and for all $n \geqslant 2$, we have
\begin{align}\label{eq 1.6}
\log p_{\mu*\mu}(n) = O_B\left ( \frac{\sqrt{n}}{(\log n)^B} \right).
\end{align}
\end{theorem}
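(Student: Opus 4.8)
The plan is to bound $|p_{\mu*\mu}(n)|$ directly by the Hardy--Littlewood circle method, feeding in the exponential‑sum estimates for M\"obius convolutions established earlier in the paper. By Cauchy's integral formula on the circle $|z|=e^{-\rho}$,
\[
p_{\mu*\mu}(n)=\frac{e^{n\rho}}{2\pi}\int_{-1/2}^{1/2}\Psi_{\mu*\mu}\!\left(e^{-\rho+2\pi i\theta}\right)e^{-2\pi i n\theta}\,d\theta ,
\]
so $|p_{\mu*\mu}(n)|\le e^{n\rho}\max_{\theta}\bigl|\Psi_{\mu*\mu}(e^{-\rho+2\pi i\theta})\bigr|$; since the interval of integration has length $1$, a bound that is uniform in $\theta$ suffices and no dissection of the $\theta$‑circle is actually needed at this stage. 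I would take $\rho=(\log n)^{-B}n^{-1/2}$, so that the prefactor is exactly $e^{n\rho}=\exp\!\bigl(\sqrt n/(\log n)^{B}\bigr)$, and everything is reduced to proving $\log\bigl|\Psi_{\mu*\mu}(e^{-\rho+2\pi i\theta})\bigr|\ll_{B}\sqrt n/(\log n)^{B}$ uniformly in $\theta$.

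Passing to logarithms via $-\log(1-z^{n})=\sum_{m\ge1}z^{nm}/m$ gives
\[
\log\Psi_{\mu*\mu}(z)=\sum_{m\ge1}\frac1m\,F(z^{m}),\qquad F(z):=\sum_{n\ge1}\mu_{2}(n)\,z^{n},
\]
so the task is to estimate the twisted power series $F(e^{-\delta}e(\alpha))$ (with $e(x)=e^{2\pi i x}$) for $0<\delta\le\tfrac12$, uniformly in $\alpha\in\mathbb{R}$. Writing $S(X,\alpha)=\sum_{n\le X}\mu_{2}(n)e(n\alpha)$ and applying Abel summation, $F(e^{-\delta}e(\alpha))=\delta\int_{0}^{\infty}S(X,\alpha)e^{-\delta X}\,dX$. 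The exponential‑sum bound for the convolution $\mu*\mu$ proved earlier --- of the shape $S(X,\alpha)\ll_{A}X(\log 2X)^{-A}$ for every fixed $A>0$, uniformly in $\alpha$ (obtained either in Davenport's style directly, or via a Farey dissection of the $\alpha$‑circle using Vaughan's identity on the minor arcs and the zero‑free region on the major arcs) --- then yields, after splitting the $X$‑integral at $X\asymp\delta^{-1/2}$ to retain the saving,
\[
\bigl|F(e^{-\delta}e(\alpha))\bigr|\ll_{A}\frac{1}{\delta\,(\log(2/\delta))^{A}}+1\qquad(0<\delta\le\tfrac12),
\]
uniformly in $\alpha$; for $\delta>\tfrac12$ the crude estimate $|F(e^{-\delta}e(\alpha))|\le\sum_{n}|\mu_{2}(n)|e^{-n\delta}=O(1)$ is enough.

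Now I would sum over $m$. For $1\le m\le(2\rho)^{-1}$ apply the displayed estimate with $\delta=m\rho$; splitting this range at $m=\rho^{-1/2}$ --- where one still has $\log(1/m\rho)\ge\tfrac12\log(1/\rho)$ --- and bounding the remaining terms crudely contributes $\ll_{A}\rho^{-1}(\log(1/\rho))^{-A}+\rho^{-1/2}+\log(1/\rho)$. For $m>(2\rho)^{-1}$ one has $m\rho>\tfrac12$, so $|F(e^{-m\rho}e(m\theta))|\le\sum_{n}|\mu_{2}(n)|e^{-nm\rho}\ll e^{-m\rho}$ and $\sum_{m>(2\rho)^{-1}}\tfrac1m e^{-m\rho}\ll\rho\sum_{m\ge1}e^{-m\rho}\ll1$. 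Hence, uniformly in $\theta$,
\[
\log\bigl|\Psi_{\mu*\mu}(e^{-\rho+2\pi i\theta})\bigr|\ll_{A}\frac{1}{\rho\,(\log(1/\rho))^{A}}+\rho^{-1/2}+\log(1/\rho).
\]
With $\rho=(\log n)^{-B}n^{-1/2}$ one has $\log(1/\rho)\asymp\log n$; choosing $A=2B$ makes the first term $\asymp\sqrt n/(\log n)^{B}$, while $\rho^{-1/2}\asymp n^{1/4}(\log n)^{B/2}=o\!\bigl(\sqrt n/(\log n)^{B}\bigr)$ and $\log(1/\rho)=o\!\bigl(\sqrt n/(\log n)^{B}\bigr)$. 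Combining with $e^{n\rho}=\exp(\sqrt n/(\log n)^{B})$ gives $|p_{\mu*\mu}(n)|\le\exp\!\bigl(O_{B}(\sqrt n/(\log n)^{B})\bigr)$, which is Theorem \ref{Partition} (the finitely many small $n$ being absorbed into the implied constant).

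Taking the exponential‑sum estimate for $\mu*\mu$ as a black box (it is the technical heart proved earlier), the main obstacle is the bookkeeping in the last two steps: one must carry the logarithmic saving faithfully through the Abel‑summation integral and, above all, through the splitting of the $m$‑sum, so that neither the intermediate range $\rho^{-1/2}\le m\le(2\rho)^{-1}$ nor the tail $m\gtrsim\rho^{-1}$ degrades it below $(\log n)^{-2B}$. The conceptual point underneath is that $F(z)=\sum_{n}\mu_{2}(n)z^{n}$ exhibits M\"obius‑type cancellation near the unit circle --- in contrast to $\sum_{n}z^{n}=z/(1-z)$ in the classical partition problem --- which is precisely why one obtains here a (small) upper bound rather than a main term of size $e^{c\sqrt n}$, and why, for this particular estimate, no genuine major‑arc main term has to be extracted.
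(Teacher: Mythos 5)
Your proposal is correct, and it uses the same underlying machinery as the paper (Cauchy's integral on a circle of radius $e^{-\rho}$ with $\rho\asymp n^{-1/2}(\log n)^{-B}$, plus exponential-sum estimates for $\mu*\mu$ that combine Type I/II cancellation for large denominators with Siegel--Walfisz-type input for small ones), but it packages the argument differently. The paper dissects the $\theta$-circle into major and minor arcs and bounds $\Phi_{\mu*\mu}(\rho\e(\theta))$ separately on each: on the major arcs via Lemmas \ref{Davenport Lemma for Convolutions}--\ref{Partial Summation Lemma} (a hyperbola-method extension of Davenport's bound to $\mu*\mu$ in progressions), and on the minor arcs via Corollary \ref{theorem 4}; in both cases it must track, for each $j$ in the outer sum, how the rational approximation of $j\theta$ relates to that of $\theta$, which is a genuinely delicate point in Lemma \ref{lem:minor}. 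You instead prove (or rather invoke) a single bound uniform in the angle, $S_{\mu_2}(X,\alpha)\ll_A X(\log 2X)^{-A}$, and then run Abel summation in $X$ and a clean splitting of the $m$-sum; no dissection of the Cauchy integral is needed, and the $m\theta$ issue disappears because uniformity in $\alpha$ covers all dilated angles at once. This is a legitimate and arguably tidier reorganization for the purpose of a pure upper bound; your bookkeeping (split of the $X$-integral at $\delta^{-1/2}$, split of the $m$-sum at $\rho^{-1/2}$ and $(2\rho)^{-1}$, choice $A=2B$) checks out, up to harmless slips (the stray $1/(2\pi)$ in Cauchy's formula, and the short-range contribution to $F$ is $O(\log(2/\delta))$ rather than $O(1)$, which is still negligible). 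The one caveat is that the uniform estimate you take as a black box is not literally stated in the paper: Theorem \ref{Mobius-k convolutions} is trivial for bounded $q$, so the small-$q$ range genuinely requires the Siegel--Walfisz-type treatment of $\mu*\mu$ in arithmetic progressions (the paper's Lemmas \ref{Davenport Lemma for Convolutions} and \ref{Partial Summation Lemma}), not just Vaughan-style decompositions; your parenthetical acknowledges this, but in a complete write-up that combination (via Dirichlet's approximation theorem with denominators up to $X(\log X)^{-C}$) would have to be carried out explicitly, and it is precisely the content of the paper's Sections \ref{sec:secondexpsumproof}--\ref{sec:minorarcs}.
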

\begin{remark}
As we shall see in the proof of Theorem \ref{Partition}, the method can be generalized to any $k \geqslant 1$, with the same upper bound as in \eqref{eq 1.6}. In particular, when $k=1$, we have
\begin{align}\label{eq 1.7}
\log p_{\mu}(n) = O_B\left ( \frac{\sqrt{n}}{(\log n)^B} \right),
\end{align}
for any fixed $B>0$ and all $n \geqslant 2$. Theorem \ref{asymptotic} is an immediate consequence of \eqref{difference} and \eqref{eq 1.7}. See Figure \ref{fig:PsiMuMu1} for a plot of the logarithms of $\mathcal{A}(n)$ and $ p_{\mu}(n)$.
\end{remark}

\begin{remark}
    Results similar to Theorems \ref{asymptotic} and \ref{Partition} hold true if the M\"{o}bius function $\mu(n)$ is replaced by the Liouville function $\lambda(n)$, where
\[
\lambda(n) = \prod_{i=1}^{k} (-1)^{a_i}, \quad \textrm{where} \quad n = \prod_{i=1}^{k} p_i^{a_i}.
\]
\end{remark}

\begin{figure}[h]
    \centering
\includegraphics[scale=0.43]{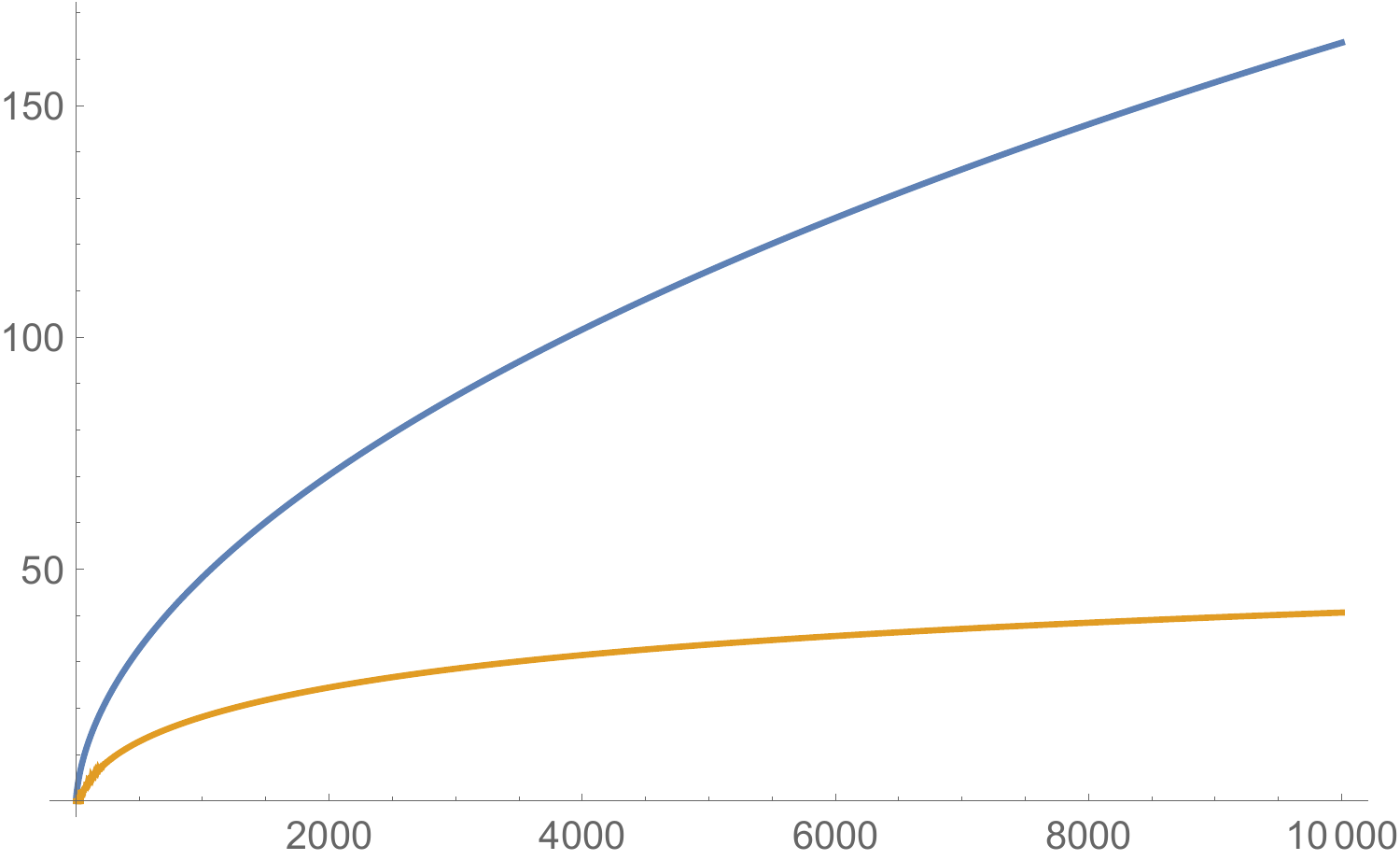}
\caption{Plot of $\log \mathcal{A}(n)$ in blue and $\log {p}_{\mu}(n)$ in orange for $1 \leqslant n \leqslant 10000$.}
    \label{fig:PsiMuMu1}
\end{figure}

\begin{figure}[h]
    \centering
\includegraphics[scale=0.32]{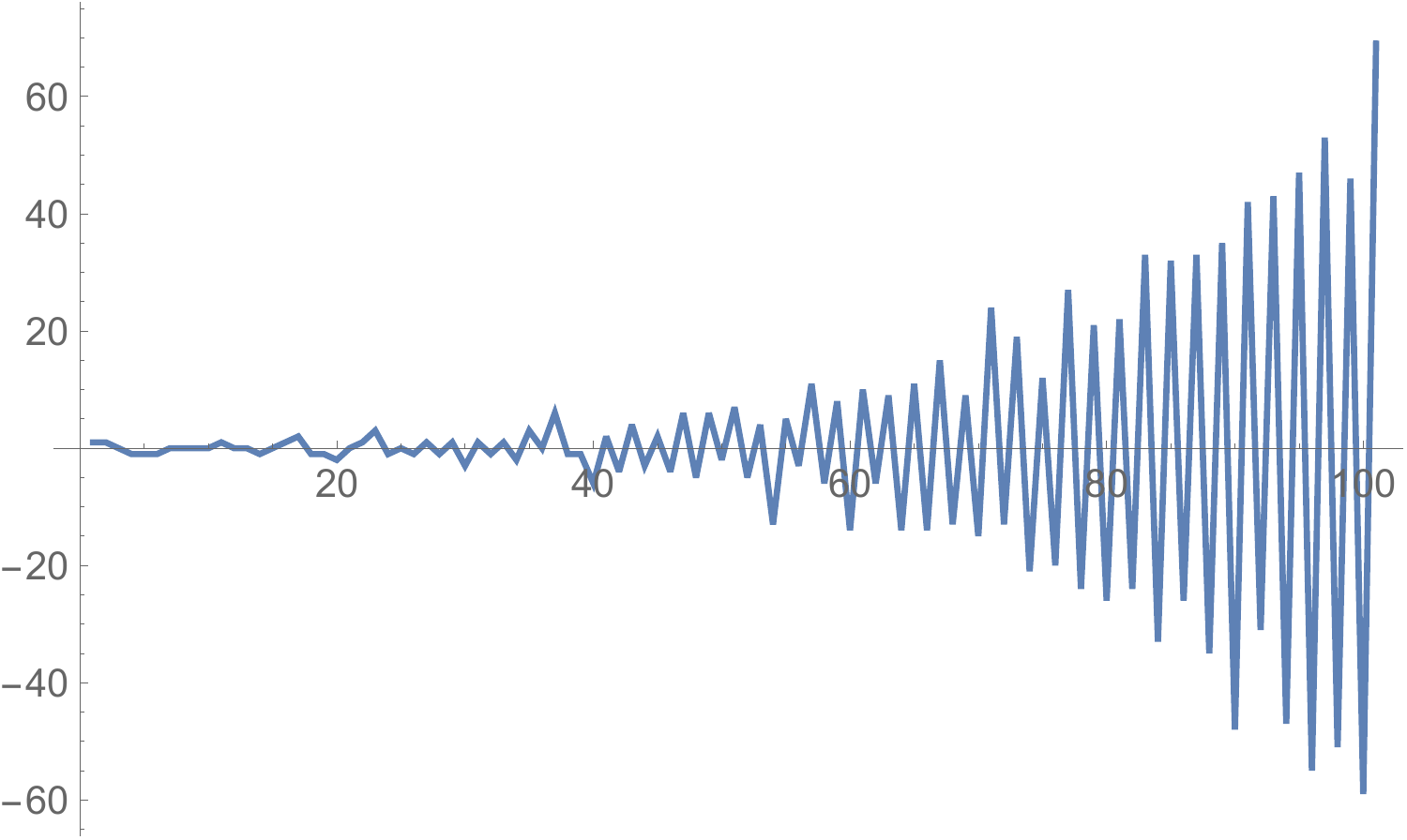}
    \includegraphics[scale=0.32]{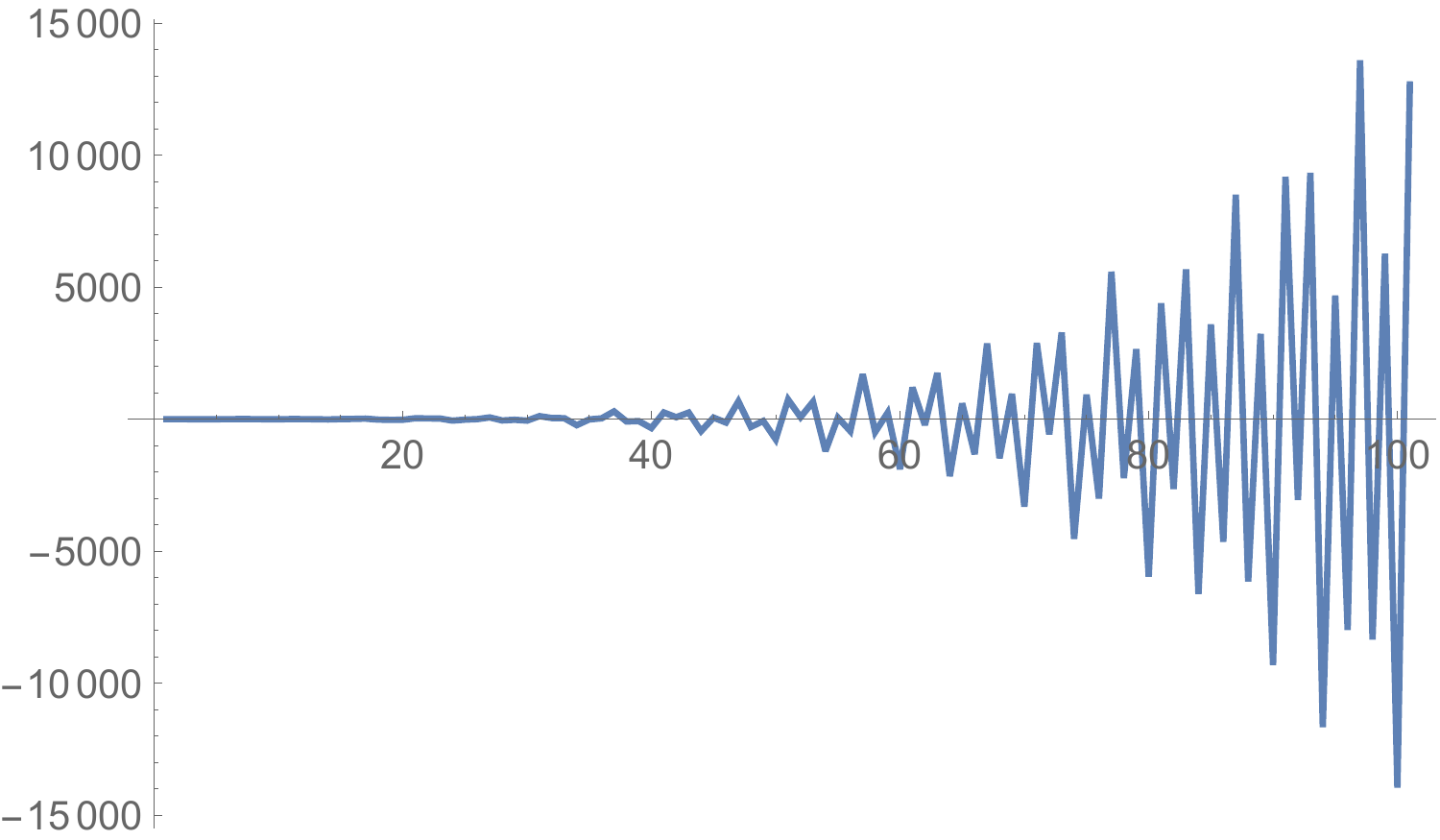}
\caption{\underline{Left}: Plot of partition function $p_w(n)$ with $w(n) = \mu(n)$, \underline{Right}: Plot of partition function $p_w(n)$ with $w(n) = (\mu*\mu)(n)$.}
    \label{fig:PsiMuMu}
\end{figure}

In order to prove Theorem \ref{Partition}, we will employ the Hardy-Littlewood method. As we shall see in Section \ref{sec:principalarc}, the contribution from the major arcs is closely related to the zeros of the Riemann zeta-function $\zeta(s)$. To be precise, following \eqref{eq:introAux01}, let us write 
\begin{align}\label{2 Convolution Mobius}
    \Psi_{\mu*\mu}(z) = \sum_{n=1}^\infty p_{\mu*\mu}(n) z^n = \prod_{n=1}^\infty (1-z^n)^{-(\mu*\mu)(n)}, \quad \lvert z \rvert <1.
\end{align}
Set
\begin{align}\label{eq 1.10}
    \Phi_{\mu*\mu}(z) := \sum_{j=1}^\infty \sum_{n=1}^\infty \frac{(\mu*\mu)(n)}{j} z^{jn}, \quad \lvert z \rvert <1.
\end{align}
Then $\Psi_{\mu*\mu}(z) = \exp(\Phi_{\mu*\mu}(z))$. Let $X \geqslant 1$ and $\rho = \exp(-1/X)$. Choose $z = \rho \e(\theta)$ in \eqref{eq 1.10} and let 
\[ \Delta := \frac{1}{(1+4\pi^2X^2\theta^2)^{1/2}}.
\]
Here, and in what follows, we write $\e(x)=\exp(2\pi i x)$. Then we have the following theorem.

\begin{theorem} \label{lem:principalmajor}
Suppose that $\theta \in \R$ and $X \ge 1$. If $X \Delta^3 \ge 1$, then there exists as sequence $T_{\nu}$ with $\nu \leqslant T_\nu \leqslant \nu+1$ such that
\begin{align} \label{eq:eqmainlemma}
    \Phi_{\mu * \mu}(\rho \e(\theta)) &= 4 \log \frac{X}{1-2\pi i X\theta} - 8 \log(2 \pi) + 72 \frac{1-2\pi i X\theta}{X}\nonumber \\
    & \quad + \lim_{\nu \to \infty} \sum_{|\operatorname{Im}(\rho)| < T_\nu}  f(X,\theta,\rho)  +\sum_{n=1}^{\infty} g(X,\theta,n),
\end{align}
where the functions $f$ and $g$ in \eqref{eq:eqmainlemma} are given by
\begin{align}
    f(X, \theta, \rho) &:= \bigg(\frac{X}{1-2\pi i X\theta}\bigg)^{\rho}\frac{\zeta (1 + \rho )\Gamma (\rho )}{{\zeta '{{(\rho )}^2}}} \bigg(\log \frac{X}{1-2\pi i X\theta} + \psi (\rho ) - \frac{{\zeta ''}}{{\zeta '}}(\rho ) + \frac{{\zeta '}}{\zeta }(1 + \rho ) \bigg), \nonumber \\
    g(X,\theta,n) &:= \bigg(\frac{X}{1-2\pi i X\theta}\bigg)^{-2 n} \bigg[\mathfrak{c}_1(n) \bigg(\log \frac{X}{1-2\pi i X\theta}\bigg)^2 + \mathfrak{c}_2(n) \log \frac{X}{1-2\pi i X\theta}+ \mathfrak{c}_3(n) \bigg]. \nonumber 
\end{align}
The coefficients $\mathfrak{c}_1(n), \mathfrak{c}_2(n)$ and $\mathfrak{c}_3(n)$ are defined in \eqref{eq:defc1}, \eqref{eq:defc2} and \eqref{eq:defc3} respectively. The sum is taken over the non-trivial zeros $\rho$ of $\zeta(s)$ under the assumption, for notational ease, of simplicity.
\end{theorem}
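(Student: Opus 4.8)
### Proof Proposal

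The plan is to evaluate $\Phi_{\mu*\mu}(\rho\e(\theta))$ by first rewriting it as a Mellin–Perron type integral and then shifting the contour, collecting residues at the pole of the integrand and at the zeros of $\zeta$. Starting from \eqref{eq 1.10}, I would interchange the order of summation and recognize the inner double sum as a Dirichlet convolution weighted by $1/j$: since $\sum_{j\ge 1}\sum_{n\ge 1}\frac{(\mu*\mu)(n)}{j}z^{jn} = \sum_{m\ge 1}\Big(\sum_{d\mid m}\frac{(\mu*\mu)(d)}{m/d}\Big)z^m$, the coefficient of $z^m$ is $((\mu*\mu)*\mathbf{1}\cdot\tfrac1\cdot)(m)$, whose Dirichlet series is $\frac{\zeta(s+1)}{\zeta(s)^2}\cdot\zeta(s)$... more cleanly, the generating Dirichlet series attached to $\Phi_{\mu*\mu}$ is $\frac{\zeta(s+1)}{\zeta(s)^{2}}$ after accounting for the $1/j$ factor as a shift. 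Writing $z=\rho\e(\theta)=\exp(-1/X + 2\pi i\theta) = \exp(-(1-2\pi i X\theta)/X)$, I would set $Y := X/(1-2\pi i X\theta)$ so that $z^m = e^{-m/Y}$, and apply the Mellin transform $e^{-u} = \frac{1}{2\pi i}\int_{(c)}\Gamma(s)u^{-s}\,ds$ with $c>0$ large. This yields
\begin{align*}
\Phi_{\mu*\mu}(\rho\e(\theta)) = \frac{1}{2\pi i}\int_{(c)} Y^{s}\,\Gamma(s)\,\frac{\zeta(s+1)}{\zeta(s)^{2}}\,ds,
\end{align*}
valid for $c>1$, using $\sum_m a_\mu(m) m^{-s} = \zeta(s+1)/\zeta(s)^2$ where $a_\mu$ is the coefficient sequence above; the condition $X\Delta^3\ge 1$ should be exactly what is needed to justify absolute convergence and the interchange, since $|Y| = X\Delta$ and one needs $|Y|$ large enough relative to error terms in the tail.

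Next I would shift the contour from $\real(s)=c$ leftward. The integrand $Y^s\Gamma(s)\zeta(s+1)/\zeta(s)^2$ has: a pole at $s=1$ coming from $\zeta(s+1)$, where $\zeta(s)^{-2}$ has a double zero (since $\zeta$ has a simple pole at $s=1$) — wait, $1/\zeta(s)^2$ has a double \emph{zero} at $s=1$, so actually the pole of $\zeta(s+1)$ at $s=0$ is what matters; let me instead track it as: $\zeta(s+1)$ contributes a simple pole at $s=0$, and at $s=0$ the factor $1/\zeta(0)^2 = 1/(1/4) = 4$ and $\Gamma(s)$ also has a simple pole at $s=0$, producing a double pole at $s=0$ whose residue generates the leading terms $4\log Y - 8\log(2\pi)$ (the $-8\log 2\pi$ arising from $\zeta'(0)/\zeta(0) = \log 2\pi$ and the Laurent expansion of $\Gamma$ and $\zeta(s+1)$ near $0$). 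The constant $72/Y$ term should come from the next correction — more precisely from a pole at $s=-1$: there $\Gamma(s)$ has a simple pole, $\zeta(-1) = -1/12$ so $1/\zeta(-1)^2 = 144$, $\zeta(0) = -1/2$, giving residue contributing $Y^{-1}$ times a rational constant, and $144\cdot(\text{stuff}) $ should collapse to $72$. The trivial zeros of $\zeta$ at $s=-2,-4,-6,\dots$ each give double poles of $1/\zeta(s)^2$, and these produce the sum $\sum_{n\ge 1} g(X,\theta,n)$ with the $(\log Y)^2$, $\log Y$, constant structure visible in $g$ (the square of a double pole residue naturally produces a quadratic in $\log Y$). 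The non-trivial zeros $\rho$ of $\zeta$ (assumed simple) give double poles as well, whose residues are precisely $f(X,\theta,\rho)$: differentiating $Y^s\Gamma(s)\zeta(s+1)/(\zeta(s)-\zeta'(\rho)(s-\rho)+\dots)^2$ at $s=\rho$ produces the combination $\log Y + \psi(\rho) - \zeta''/\zeta'(\rho) + \zeta'/\zeta(1+\rho)$ multiplied by $Y^\rho\Gamma(\rho)\zeta(1+\rho)/\zeta'(\rho)^2$.

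To make the contour shift rigorous, I would shift to lines $\real(s) = -T_\nu'$ (horizontal truncation handled by a sequence $T_\nu \in [\nu,\nu+1]$ avoiding ordinates of zeros, using standard bounds $1/\zeta(\sigma+it) \ll t^{A}$ on such lines and the exponential decay of $\Gamma$ on vertical lines to kill the horizontal segments) and send $\nu\to\infty$; the vertical integral at $\real(s) = -\infty$ vanishes because $\Gamma(s)$ decays faster than $Y^{-s} = |Y|^{T}$ grows, again using $|Y| = X\Delta$ controlled. Summing all residues in the order: double pole at $s=0$ (giving $4\log Y - 8\log 2\pi$), simple/double pole at $s=-1$ (giving $72/Y$), non-trivial zeros (giving $\lim_\nu \sum_{|\imag(\rho)|<T_\nu} f$), trivial zeros at $-2n$ (giving $\sum_n g$), yields exactly \eqref{eq:eqmainlemma}. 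The main obstacle I anticipate is the bookkeeping of the residue at the trivial zeros: each $s=-2n$ is a double pole of $1/\zeta(s)^2$ but $\Gamma(s)$ also has a simple pole there, so it is genuinely a triple pole, and carefully extracting the three coefficients $\mathfrak{c}_1(n),\mathfrak{c}_2(n),\mathfrak{c}_3(n)$ in terms of $\zeta'(-2n)$, $\zeta''(-2n)$, $\psi$-values and the residue/finite-part of $\Gamma$ at $-2n$ requires a careful Laurent expansion — this is the computational heart of the argument. A secondary technical point is verifying that the interchange of limit and sum over non-trivial zeros is legitimate, which is why the result is phrased with $\lim_{\nu\to\infty}$ over the truncated sum rather than as an absolutely convergent series.
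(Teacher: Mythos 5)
Your proposal is correct and follows essentially the same route as the paper: the Cahen--Mellin representation with kernel $\Gamma(s)\zeta(s+1)\zeta(s)^{-2}\,Y^{s}$ where $Y=X/(1-2\pi i X\theta)$, a leftward contour shift collecting the double pole at $s=0$ (giving $4\log Y-8\log 2\pi$), the simple pole at $s=-1$ (giving $72/Y$), double poles at the simple non-trivial zeros (giving $f$), and triple poles at the trivial zeros $s=-2n$ (giving $g$ with $\mathfrak{c}_1,\mathfrak{c}_2,\mathfrak{c}_3$), with the horizontal segments taken at well-chosen heights $T_\nu$ and the far-left line pushed to $-\infty$. The one imprecision is your appeal to a ``standard'' bound $1/\zeta(\sigma+it)\ll t^{A}$ on such lines: inside the critical strip this is not classical (one only gets $\exp\bigl(O(\log t\,\log\log t)\bigr)$ on lines chosen away from ordinates of zeros), and the paper instead invokes the Ramachandra--Sankaranarayanan/Inoue result (Lemma \ref{lem:RamachandraSankaranarayanan}) to select $T_\nu$ with $1/\zeta(\sigma+iT_\nu)\ll T_\nu^{\varepsilon}$ --- though either form of the bound is subexponential and hence suffices against the decay $e^{-T_\nu\Delta}$, so this does not affect the validity of your outline.
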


\begin{remark}
As Titchmarsh puts it in \cite[$\mathsection$14.27]{Titchmarsh}: `obvious modifications are required if' the zeros are not simple. Indeed, this assumption does not represent a technical difficulty and it can be relaxed at the expense of cluttering the expression $f(X,\theta,\rho)$ for the non-trivial zeros.
\end{remark}

\begin{remark}
Interestingly, \eqref{eq:eqmainlemma} is an instance of an explicit formula in the theory of partitions. It is reminiscent of the classical explicit formula
\begin{align*}
    \psi_0(x) = \frac{1}{2} \lim_{\delta \to 0} (\psi(x+\delta)-\psi(x-\delta)) = x - \lim_{T \to \infty}\sum_{\lvert \operatorname{Im}(\rho) \rvert <T} \frac{x^\rho}{\rho} - \log(2\pi) + \sum_{n=1}^{\infty} \frac{x^{-2n}}{2n}.
\end{align*}
Here $\rho$ runs over the non-trivial zeros of $\zeta(s)$ and
\begin{align}\label{Psi defn}
    \psi(x) = \sum_{p^k \leqslant x} \log p,
\end{align}
where the sum in \eqref{Psi defn} is over prime powers.
\end{remark}
\begin{remark}
Consider the truncated arithmetic sum
\begin{align*} %\label{eq:PhiArithemticTruncated}
    \Phi_1 := \Phi_{1}(X,\theta, J, N) = \sum_{j=1}^J \sum_{n=1}^N \frac{(\mu * \mu)(n)}{j} \exp\bigg( -jn \bigg(\frac{1}{X}-2\pi i \theta\bigg)\bigg),
\end{align*}
and its analytic truncated counterpart
\begin{align*} 
\Phi_2 := \Phi_{2}(X,\theta,T,N) &= 4 \log \frac{X}{1-2\pi i X\theta} - 8 \log(2 \pi) + 72 \frac{1-2\pi i X\theta}{X} \\
&\quad+ \sum_{\lvert \operatorname{Im}(\rho) \rvert < T}  f(X,\theta,\rho)  +\sum_{n=1}^{N} g(X,\theta,n) .
\end{align*}
We set $\theta=\theta(X)$ as
\begin{align*}
    \theta(X) := \frac{1}{2\pi} \sqrt{\frac{1}{X^{4/3}}-\frac{1}{X^2}},
\end{align*}
such that $X \Delta^3=1$. In Figures \ref{fig:Renumericallemmaprincipal} and \ref{fig:Imnumericallemmaprincipal}, we numerically illustrate our result from Theorem \ref{lem:principalmajor} in terms of the functions $\Phi_1$ and $\Phi_2$.
\end{remark}

\begin{figure}[h!]
    \centering
\includegraphics[scale=0.645]{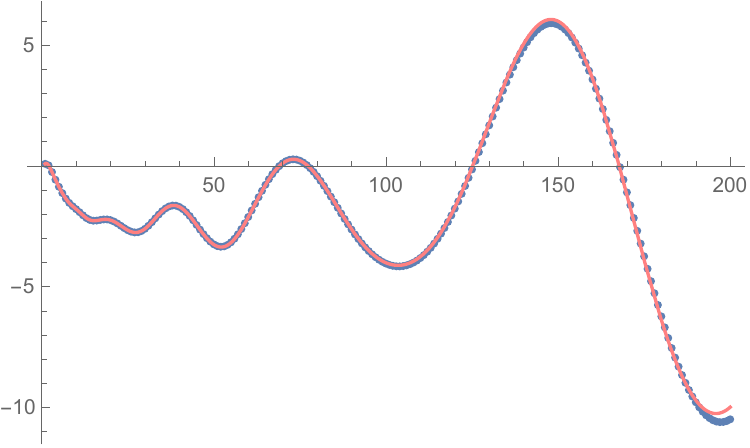}
\includegraphics[scale=0.645]{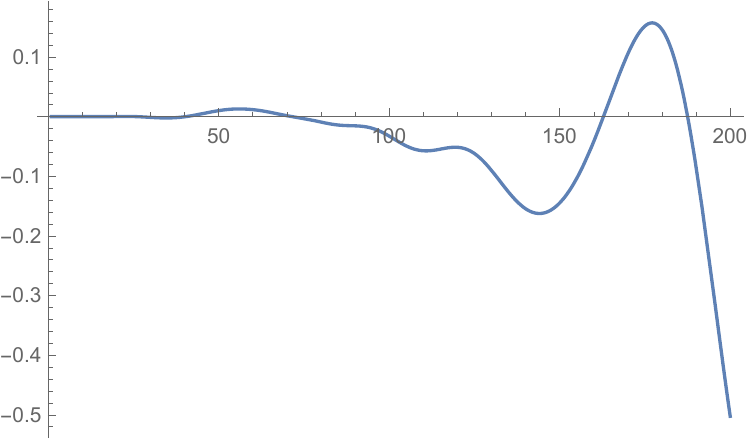}
\caption{\underline{Left}: Plot of $\operatorname{Re}(\Phi_1(X,\theta,120,800))$ in blue and $\operatorname{Re}(\Phi_2(X,\theta,20,10))$ in pink. \underline{Right}: Plot of $\operatorname{Re}(\Phi_1(X,\theta,120,800))-\operatorname{Re}(\Phi_2(X,\theta,20,10))$.}

\label{fig:Renumericallemmaprincipal}
%\end{figure}
%
%\begin{figure}[h!]
\centering
\includegraphics[scale=0.645]{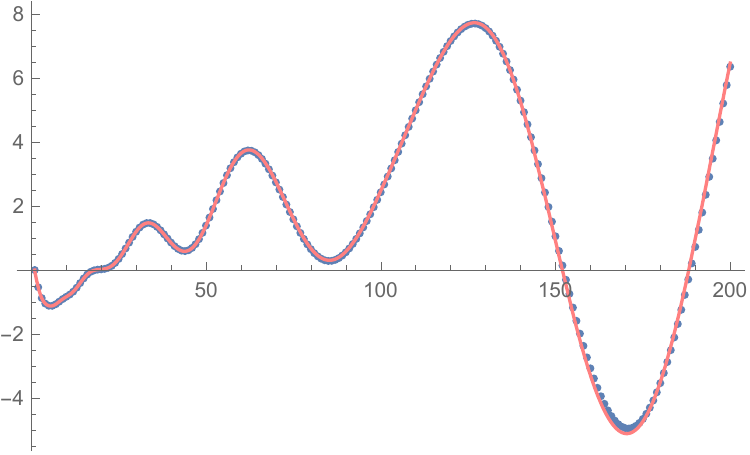}
\includegraphics[scale=0.645]{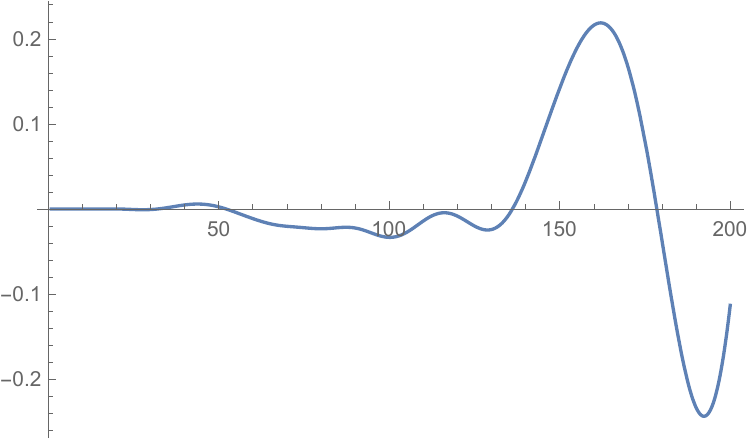}
\caption{\underline{Left}: Plot of $\operatorname{Im}(\Phi_1(X,\theta,120,800))$ in blue and $\operatorname{Im}(\Phi_2(X,\theta,20,10))$ in pink. \underline{Right}: Plot of $\operatorname{Im}(\Phi_1(X,\theta,120,800))-\operatorname{Im}(\Phi_2(X,\theta,20,10))$.}
\label{fig:Imnumericallemmaprincipal}
%\end{figure}
%
%\begin{figure}[h!]
\label{fig:numericallemmaprincipal}
\end{figure}

We will also need to estimate the minor arc contributions arising from the Hardy-Littlewood method. This leads us to the second major goal of the present paper, which is to study exponential sums twisted by general M\"{o}bius convolutions and related questions. With this in mind, we start by considering the M\"{o}bius function itself. Let us write
\begin{align*}
    S_{\mu}(X, \alpha) =\sum_{n\leqslant X}\mu(n)\e(n\alpha).
\end{align*}
Then we have the following result.
\begin{theorem}\label{theorem 1}
Let $\alpha\in \mathbb{R}$ and consider a reduced fraction $a/q$ such that $|\alpha-a/q|\leqslant1/q^2$. Then for $X \geqslant 2$ and for any fixed $\varepsilon>0$, we have
\begin{align*}
S_{\mu}(X, \alpha) \ll_{\varepsilon} X^{4/5+\varepsilon}+ \dfrac{X(\log X)^3}{q^{1/2}} + X^{1/2}q^{1/2}(\log X)^3.
\end{align*}
\end{theorem}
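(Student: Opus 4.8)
The plan is to bound $S_\mu(X,\alpha)$ using Vaughan's identity together with the standard treatment of the resulting bilinear (Type I and Type II) sums. This is the classical route for estimating exponential sums twisted by $\mu$, and the three-term shape of the bound — an error term $X^{4/5+\varepsilon}$ that is independent of $q$, plus the two ``approximation'' terms $X(\log X)^3 q^{-1/2}$ and $X^{1/2}q^{1/2}(\log X)^3$ — is exactly what comes out of optimizing the Vaughan decomposition parameters against the rational approximation $|\alpha - a/q| \le 1/q^2$.

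**First I would** recall Vaughan's identity: for a parameter $U \ge 1$ (to be chosen as a small power of $X$, e.g. $U = X^{2/5}$), writing $\mu = \mu_{\le U} + (\text{terms involving } \mu \text{ and } \Lambda \text{ or } 1 \text{ on } (U,X])$, one decomposes
\[
S_\mu(X,\alpha) = S_1 + S_2 + S_3,
\]
where $S_1$ is a ``trivial'' short sum of length $\le U$ contributing $O(U)$, $S_2$ is a Type I sum $\sum_{d \le U} \sum_{m \le X/d} a_d\, \e(dm\alpha)$ with $|a_d| \le \tau(d)$ or so, and $S_3$ is a Type II (bilinear) sum $\sum_{U < m \le X/U}\sum_{U < k \le X/m} b_m c_k\, \e(mk\alpha)$ with coefficients bounded by divisor functions. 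For the Type I sum $S_2$, I would use the geometric-series bound $\sum_{m \le Y} \e(m\beta) \ll \min(Y, \|\beta\|^{-1})$ and then sum $\min(X/d, \|d\alpha\|^{-1})$ over $d \le U$; the standard divisor-sum-over-arithmetic-progressions estimate (e.g. via $|\alpha - a/q| \le 1/q^2$, splitting $d$ into residue classes mod $q$) yields a contribution of the order $(X/q + U + q)(\log X)^{O(1)}$, which is absorbed into the claimed terms once $U$ is a suitable power of $X$.

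**The main work** is the Type II sum $S_3$. Here I would apply Cauchy–Schwarz in the variable $m$ (over the range $U < m \le X/U$), bounding $|S_3|^2$ by a length factor times $\sum_m \big| \sum_k c_k \e(mk\alpha) \big|^2$, then expand the square and swap the order of summation to get $\sum_{k_1,k_2} c_{k_1}\overline{c_{k_2}} \sum_m \e(m(k_1-k_2)\alpha)$. The inner $m$-sum is again $\ll \min(X/m\text{-range}, \|(k_1-k_2)\alpha\|^{-1})$, and after separating the diagonal $k_1 = k_2$ (contributing $O(X^{1+\varepsilon}/U)$ type terms, using $\sum |c_k|^2 \ll X^{1+\varepsilon}$) from the off-diagonal, one is left with a sum of $\min(\cdot, \|h\alpha\|^{-1})$ over $h = k_1 - k_2$ with multiplicity, which the rational approximation controls as $\ll (X/q + X\varepsilon\text{-power} + q)(\log X)^{O(1)}$. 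Combining, $|S_3| \ll (X/U^{1/2} + X^{1/2}U^{1/2})(X/q + \cdots)^{1/2}(\log X)^{O(1)}$, and the choice $U \asymp X^{2/5}$ balances the $q$-free part of the Type II bound against $X^{4/5+\varepsilon}$ while the $q$-dependent pieces reproduce $X(\log X)^3/q^{1/2}$ and $X^{1/2}q^{1/2}(\log X)^3$.

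**The hard part will be** the bookkeeping in the Type II estimate: tracking the divisor-function weights through Cauchy–Schwarz (so that the relevant $\ell^2$ and $\ell^1$ norms of $b_m$ and $c_k$ only cost factors of $(\log X)^{O(1)}$, not $X^\varepsilon$ unless one is willing to absorb it), and carefully handling the $\min(\cdot,\cdot)$ sums over arithmetic-progression-like sets so that the final exponents of $\log X$ come out as exactly $3$ rather than some larger constant. Everything else — Vaughan's identity, the geometric series bounds, the diagonal/off-diagonal split — is routine; the only genuine choice is the single parameter $U$, optimized as above. I expect the write-up to reduce, after stating Vaughan's identity, to two lemmas (a Type I bound and a Type II bound) whose proofs are short applications of the $\min$-sum estimate, followed by the optimization of $U$.
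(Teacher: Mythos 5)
Your overall route --- a Vaughan-type decomposition of $\mu$ into Type I and Type II pieces, standard bilinear estimates, and optimization of the truncation parameter near $X^{2/5}$ --- is the same as the paper's, which writes $\mu = \mu_{\leqslant U} + \mu_{\leqslant V} - \mu_{\leqslant U}*\mu_{\leqslant V}*1 + \mu_{>U}*\mu_{>V}*1$ and then invokes Lemma~\ref{lem1} (Type I) and Lemma~\ref{lem2} (Type II, applied after a dyadic decomposition) rather than re-deriving the bilinear bound by Cauchy--Schwarz as you propose. So the method is right; the genuine gap is in the parameter choice and in exactly the bookkeeping you defer.

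The theorem is only useful because the $q$-dependent terms carry $(\log X)^3$ and not $X^{\varepsilon}$: in the minor-arc application $q$ can be as small as a power of $\log X$, so any $X^{\varepsilon}$ attached to $X/q$ or $X/q^{1/2}$ destroys the saving. With your fixed choice $U \asymp X^{2/5}$, the Type I piece has divisor-bounded coefficients supported up to $UV \asymp X^{4/5}$, and if (as you allow) those weights are absorbed into $X^{\varepsilon}$, its contribution is of size $X^{1+\varepsilon}/q$; for $q=(\log X)^{10}$ this exceeds both $X^{4/5+\varepsilon}$ and $X(\log X)^3/q^{1/2} \asymp X/(\log X)^2$, so the stated bound does not follow. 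Keeping log powers via a $\tau$-weighted min-sum estimate still leaves $X(\log X)^{C}/q$ with $C>3$ unless you separately note the bound is trivial for $q \leqslant (\log X)^6$ and control $C$ carefully. The paper sidesteps all of this by making the truncation $q$-dependent: $U=V=\min\{X^{2/5}, q, X/q\}$. Then the sup-norm factor $(UV)^{\varepsilon}$ is harmless in every range (it is only a power of $\log X$ when $q$ or $X/q$ is small), the Type I term $(UV + X/q + q)(\log X)(UV)^{\varepsilon}$ is acceptable throughout, and the Type II term $X/U^{1/2}$ automatically degenerates to $X/q^{1/2}$ for small $q$ and to $X^{1/2}q^{1/2}$ for large $q$, which is precisely how the three terms and the exponent $3$ on the logarithm arise. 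Incidentally, your displayed Type II bound $(X/U^{1/2}+X^{1/2}U^{1/2})(\cdots)^{1/2}$ is not the correct shape; after Cauchy--Schwarz and dyadic splitting it should read $\ll (q + X/U + X/V + X/q)^{1/2} X^{1/2}(\log X)^{O(1)}$, as in \eqref{mu-3 sum}.
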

More generally, let 
\begin{align*}
    S_{\mu_k}(X, \alpha) =\sum_{n\leqslant X }\mu_k(n)\e(n\alpha),
\end{align*}
where $\mu_k(n)$ is as defined in \ref{convolution defn}. We prove the following theorem.
\begin{theorem}\label{Mobius-k convolutions}
Let $\alpha\in \mathbb{R}$ and consider a reduced fraction $a/q$ such that $|\alpha-a/q|\leqslant1/q^2$. Consider the sequences $\{a_m\}, \{b_m\}$ and $ \{c_m\}$ given by
\begin{align*}
    a_1 &= \frac{4}{5} \quad \textrm{and} \quad \hspace{0.35cm}a_{m+1} = \frac{4-a_m}{5-2a_m} \quad \textrm{for} \quad m=1,2,\ldots, \\
    b_1 &= \frac{1}{2} \quad \textrm{and} \quad \quad b_{m+1} = \frac{b_m}{3} \quad \quad \quad \hspace{0.05cm} \textrm{for} \quad m=1,2,\ldots, \\
    \textrm{as well as} \quad c_1 &= \frac{1}{2} \quad \textrm{and} \quad \quad c_{m+1} = \frac{4-c_m}{5-2c_m} \quad \textrm{for} \quad m=1,2,\ldots.
\end{align*}
Then for $X \geqslant 2$, $k \geqslant 2$ and any fixed $\varepsilon>0$, we have
\begin{align*}
    S_{\mu_k}(X, \alpha) \ll_{\varepsilon} X^{a_k+\varepsilon}+ \dfrac{X(\log X)^{k^2}}{q^{b_k}} + X^{c_k}q^{1-c_k}(\log X)^{k^2}.
\end{align*}
\end{theorem}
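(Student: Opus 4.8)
\medskip\noindent\textbf{Proof strategy.} I would prove Theorem~\ref{Mobius-k convolutions} by induction on $k$. The inductive step transports a bound of the stated shape from level $k-1$ to level $k$; at $k=2$ it takes Theorem~\ref{theorem 1} as input. The starting point is the factorization $\mu_k=\mu_{k-1}*\mu$, together with the elementary bound $|\mu_j(n)|\le d_j(n)\ll_{j,\varepsilon}n^{\varepsilon}$, valid since $\mu_j$ is the $j$-fold Dirichlet convolution of $|\mu|\le\mathbf 1$; thus every coefficient sequence occurring below is divisor-bounded, which is the source of the logarithmic losses $(\log X)^{k^2}$.

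For the inductive step, write $S_{\mu_k}(X,\alpha)=\sum_{de\le X}\mu_{k-1}(d)\mu(e)\e(de\alpha)$ and split the hyperbola $de\le X$ at a cutoff $Z=X^{\eta}$ with $\eta=\eta(k)$ to be chosen:
\[
S_{\mu_k}(X,\alpha)=\sum_{e\le Z}\mu(e)\,S_{\mu_{k-1}}\!\Big(\tfrac{X}{e},e\alpha\Big)+\sum_{d\le X/Z}\mu_{k-1}(d)\Big(S_{\mu}\!\Big(\tfrac{X}{d},d\alpha\Big)-S_{\mu}\big(Z,d\alpha\big)\Big)=:\Sigma_1+\Sigma_2 .
\]
In $\Sigma_2$ the inner sums are pure M\"obius sums, to be estimated by Theorem~\ref{theorem 1}; in $\Sigma_1$ they are level-$(k-1)$ sums, to be estimated by the inductive hypothesis. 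Both require, for each integer $t$ in the outer range $[1,T]$ ($T=Z$ or $T=X/Z$), a Dirichlet approximation $|t\alpha-a_t/q_t|\le 1/(q_tQ)$ with $(a_t,q_t)=1$ and $1\le q_t\le Q$, for a parameter $Q\ge1$ to be chosen; since $q_t\le Q$ this forces $|t\alpha-a_t/q_t|\le q_t^{-2}$, so the hypotheses of Theorem~\ref{theorem 1} and of the induction hold with modulus $q_t$. Inserting these, $\Sigma_1$ is dominated by $\sum_{e\le Z}|\mu(e)|$ times the level-$(k-1)$ bound evaluated at $(X/e,q_e)$, and $\Sigma_2$ by the analogous quantity with $(a_{k-1},b_{k-1},c_{k-1})$ replaced by $(\tfrac45,\tfrac12,\tfrac12)$ and the log-exponent by $3$. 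When $d$ and $e$ are of comparable size these two sums lose too much, and one must in addition isolate a genuinely bilinear block $\sum_{d\sim D}\sum_{e\sim E}\mu_{k-1}(d)\mu(e)\e(de\alpha)$ with $DE\asymp X$ — most cleanly via a Vaughan-type identity for $1/\zeta^{k}=1/\zeta^{k-1}\cdot1/\zeta$ — and bound it by Cauchy--Schwarz in the $\mu_{k-1}$-variable, using $\sum_{d\sim D}|\mu_{k-1}(d)|^{2}\ll D(\log X)^{O(k^{2})}$, followed by counting solutions of $\|(e_1-e_2)\alpha\|$ small; this is the piece responsible for the $q^{1/2}$-type terms.

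The $t$-independent parts of these bounds sum routinely ($\sum_{e\le Z}e^{-a_{k-1}}\asymp Z^{1-a_{k-1}}$, $\sum_{d\le X/Z}d^{-4/5}\asymp(X/Z)^{1/5}$, and the $q^{1-c}$-terms are handled by replacing $q_t$ with $Q$). The genuinely delicate step — which I expect to be the main obstacle — is summing the middle, $q_t$-dependent terms $\sum_{t\le T}|\mu_{k-1}(t)|\,(X/t)\,q_t^{-b}$: this demands control of the distribution of the denominators $q_t$ of the Dirichlet approximations to $t\alpha$. Because $|\alpha-a/q|\le q^{-2}$, the $t\le T$ for which $q_t$ is abnormally small are not arbitrary — $q_t\le r$ forces $\|t\alpha\|$ to be small, which confines such $t$ to $O(Tr^{2}/q+r^{2})$ values, roughly the integers near the multiples of $q/r$. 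Partitioning the outer range by the dyadic size of $q_t$ and inserting this count lets one bound the middle term, but only at the cost of replacing $q_t^{-b}$ by a strictly smaller effective power of $q$; carried through the induction, this degradation is what turns $b_k=b_{k-1}$ into $b_k=b_{k-1}/3$ and likewise worsens $a_k$ and $c_k$. Keeping the resulting bound of \emph{exactly} the stated shape — so that the induction closes — is the part that requires the most care.

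Finally one chooses $Z=X^{\eta(k)}$ and the Dirichlet parameters $Q$ so that the contributions of $\Sigma_1$, $\Sigma_2$ and the bilinear block balance; tracking this optimization term by term produces the recursions for $a_k,b_k,c_k$. It is worth recording that the substitution $w=(a-1)/(a-2)$ conjugates $a\mapsto(4-a)/(5-2a)$ to $w\mapsto w/3$, so the $a_k$- and $c_k$-recursions are, in these coordinates, the same geometric contraction as $b_k=b_{k-1}/3$; this explains the common factor $3$ and the fact that all three sequences march monotonically toward their (trivial) limits $1,0,1$. The accumulated powers of $\log X$ — from the divisor bounds $|\mu_j|\le d_j$, from the $(\log X)^{3}$ in Theorem~\ref{theorem 1}, and from the Cauchy--Schwarz step — are absorbed into the crude exponent $k^{2}$, completing the induction and the proof of Theorem~\ref{Mobius-k convolutions}.
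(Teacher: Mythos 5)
Your overall skeleton matches the paper's: induct on $k$ via $\mu_k=\mu_{k-1}*\mu$, treat the unbalanced ranges by feeding the dilated frequencies $t\alpha$ into Theorem \ref{theorem 1} and the inductive hypothesis, treat the balanced range bilinearly (your Cauchy--Schwarz plus counting of $\|(e_1-e_2)\alpha\|$ is essentially a re-proof of Lemma \ref{lem2}, which the paper simply cites), and then optimize the cutoffs. But the step you yourself flag as ``the main obstacle'' is a genuine gap, and your proposed route around it is not the one that works. You take, for each $t$, a fresh Dirichlet approximation $a_t/q_t$ with $q_t\leqslant Q$, so the inner-sum hypotheses are formally satisfied with moduli $q_t$ that have no a priori relation to $q$; you then need to convert $\sum_t (X/t)q_t^{-b}$ (and the $q_t^{1-c}$-terms) back into bounds involving the \emph{original} $q$ with the exact exponents $b_k,c_k$. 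Your counting claim (``$q_t\leqslant r$ confines $t$ to $O(Tr^2/q+r^2)$ values'') is asserted rather than proved, the dyadic bookkeeping that is supposed to produce precisely $b_k=b_{k-1}/3$ and the M\"obius-type maps for $a_k,c_k$ is not carried out, and in fact that is not where those recursions come from.

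The paper's resolution is different and is the missing idea: keep the original $a/q$. Writing $t\alpha=\frac{ta/(t,q)}{q/(t,q)}+t\beta$, one sees $t\alpha$ lies within $\gamma/q_t^{2}$ of a reduced fraction with denominator $q_t=q/(t,q)$, where $\gamma=t/(t,q)\leqslant t$. One therefore needs versions of the Type I/II estimates and of Theorem \ref{theorem 1} (and of the inductive statement itself) that are uniform in approximations of quality $\gamma/q^2$ and lose only a factor $\gamma$; this is exactly Proposition \ref{Prop7}, Lemmas \ref{lem5}--\ref{lem6}, Lemma \ref{Gamma-Mobius} and \eqref{With gamma}, none of which appear in your outline. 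With that in hand, the loss in the unbalanced sums is a clean factor $\leqslant t$ summed over $t\leqslant M$ (giving terms like $X^{a_\ell}M^{2-a_\ell}$, $XM/q^{b_\ell}$, $X^{c_\ell}q^{1-c_\ell}M^{2-c_\ell}$, plus $X^{1/2}q^{1/2}M^{3/2}$ from the $\mu$-side handled with $d_\ell$ and Lemma \ref{Divisor Sum Powers}), and the recursions fall out purely from choosing $M=N$ optimally against the bilinear term $X/M^{1/2}$: balancing $XM/q^{b_\ell}$ with $X/M^{1/2}$ gives $b_{\ell+1}=b_\ell/3$, and the analogous balances give $a_{\ell+1}=(4-a_\ell)/(5-2a_\ell)$ and $c_{\ell+1}=(4-c_\ell)/(5-2c_\ell)$ --- so your attribution of the factor $3$ to a distributional degradation in the $q_t$ is a misdiagnosis, even though your observation that the substitution $w=(a-1)/(a-2)$ conjugates all three recursions to $w\mapsto w/3$ is correct and pleasant. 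Until you either prove your $q_t$-distribution count and push it through to the stated exponents, or replace it by $\gamma$-flexible estimates of the above kind, the induction does not close.
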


We also establish results for exponential sums involving Dirichlet convolutions of the M\"{o}bius function with the characteristic function of squarefull and squarefree integers respectively. To this end, we define the arithmetic function $\hat{\mu}$ by
\begin{align*}
    \hat{\mu}(n) := (\mu*f)(n),
\end{align*}
where $f$ denotes the indicator function for the squarefull numbers. Writing 
\begin{align*}
    S_{\hat{\mu}}(X, \alpha) =\sum_{n\leqslant X }\hat{\mu}(n)\e(n\alpha), 
\end{align*}
we are able to prove the following estimate.
\begin{theorem}\label{theorem 2}
Let $\alpha\in \mathbb{R}$ and consider a reduced fraction $a/q$ such that $|\alpha-a/q|\leqslant1/q^2$. Then for $X \geqslant 2$, we have
\begin{align*}
    S_{\hat{\mu}}(X, \alpha) \ll \bigg(X^{5/6} + q^{1/2}X^{1/2} + \dfrac{X}{q^{1/2}}\bigg)(\log X)^{5/2}.
\end{align*}
\end{theorem}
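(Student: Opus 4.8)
The plan is to write $\hat\mu = \mu * f$ with $f$ the indicator of squarefull numbers, and to exploit the fact that squarefull numbers are sparse. First I would split the exponential sum according to the convolution:
\[
S_{\hat\mu}(X,\alpha) = \sum_{n \le X} \hat\mu(n)\e(n\alpha) = \sum_{d^2 e^3 \le X} f(d^2 e^3)\, \e\!\big(\cdot\big) \sum_{m \le X/(d^2e^3)} \mu(m)\,\e(m d^2 e^3 \alpha),
\]
or more cleanly $S_{\hat\mu}(X,\alpha) = \sum_{s \le X} f(s)\, S_\mu(X/s,\, s\alpha)$, where the sum on $s$ runs over squarefull integers only. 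Every squarefull $s$ can be written uniquely as $s = a^2 b^3$ with $b$ squarefree, so $\#\{s \le Y : s \text{ squarefull}\} \ll Y^{1/2}$, and more precisely the number of squarefull $s$ in a dyadic block near $Y$ is $\ll Y^{1/2}$.

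Next I would insert the bound from Theorem \ref{theorem 1} for each inner sum $S_\mu(X/s, s\alpha)$. Here the Diophantine data must be tracked: given $|\alpha - a/q| \le 1/q^2$, I need a rational approximation to $s\alpha$. Writing $s\alpha = sa/q + s(\alpha - a/q)$ and reducing $sa/q$ to lowest terms $a'/q'$ with $q' = q/\gcd(s,q)$, one gets $|s\alpha - a'/q'| \le s/q^2 \le (X/s)/q'^{\,2}$ provided $s^2 \le q'^2 \cdot \text{(something)}$ — so for $s$ not too large this is a legitimate input to Theorem \ref{theorem 1}, and for $s$ large one uses the trivial bound $|S_\mu(X/s, s\alpha)| \le X/s$ together with sparsity. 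Plugging Theorem \ref{theorem 1} in gives a contribution of roughly
\[
\sum_{s \text{ squarefull}} \left( (X/s)^{4/5+\varepsilon} + \frac{(X/s)(\log X)^3}{(q')^{1/2}} + (X/s)^{1/2}(q')^{1/2}(\log X)^3 \right),
\]
and I would sum each term over squarefull $s \le X$. The first term: $\sum_{s \text{ sqfull}} (X/s)^{4/5} \ll X^{4/5}\sum_{s} s^{-4/5}$, and since squarefull $s$ have density exponent $1/2$, $\sum_{s \le X, \text{ sqfull}} s^{-4/5}$ converges (the exponent $4/5 > 1/2$), giving $\ll X^{4/5+\varepsilon}$, which is absorbed into $X^{5/6}(\log X)^{5/2}$. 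The second and third terms need partial summation against the squarefull counting function; the second yields $\ll \frac{X}{q^{1/2}}(\log X)^{5/2}$ (the $\gcd$ factors only help), and the third, being increasing in the range, is dominated by its value near $s \asymp 1$, giving $\ll q^{1/2}X^{1/2}(\log X)^{5/2}$ — but one must be careful: the third term grows like $(X/s)^{1/2}(q/\gcd(s,q))^{1/2} \le (X/s)^{1/2}q^{1/2}$, and $\sum_{s \text{ sqfull}} s^{-1/2} \sim \sum s^{-1/2}$ over density-$1/2$ sets, which \emph{diverges} logarithmically up to the cutoff, contributing the $(\log X)$-type factors; this is exactly the source of the $(\log X)^{5/2}$ rather than $(\log X)^3$, and the bookkeeping of where the $5/2$ power comes from is the delicate part.

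The main obstacle I expect is precisely this last point: carefully choosing the threshold in $s$ at which one switches from the Theorem \ref{theorem 1} bound to the trivial bound $X/s$, and then verifying that summing the three error terms over squarefull $s$ below that threshold — with the correct handling of $\gcd(s,q)$ in the modulus $q'$, and the correct logarithmic losses from $\sum s^{-1/2}$ over squarefull integers — produces exactly $(X^{5/6} + q^{1/2}X^{1/2} + X/q^{1/2})(\log X)^{5/2}$ and not something larger. A secondary subtlety is that Theorem \ref{theorem 1} requires its approximation to satisfy $|s\alpha - a'/q'| \le 1/q'^{\,2}$, which may fail when $s$ is comparable to $q$; in that regime one instead applies Theorem \ref{theorem 1} with a fresh continued-fraction approximant of $s\alpha$ having denominator in $[1, X/s]$ guaranteed by Dirichlet, which is harmless since the bound there is of the same shape. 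Everything else — the uniqueness of $s = a^2b^3$, the convergence of $\sum_{\text{sqfull}} s^{-4/5}$, the partial summations — is routine.
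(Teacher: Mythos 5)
There is a genuine gap, and it is exactly at the point you flagged as "delicate": the Diophantine bookkeeping for $s\alpha$ does not work out. Theorem \ref{theorem 1} needs an approximation of quality $1/q'^{2}$, but after dilation by $s$ you only have $\lvert s\alpha - a'/q'\rvert \leqslant s/q^{2} = \big(s/(s,q)^{2}\big)/q'^{2}$ with $q'=q/(s,q)$, so the hypothesis fails for essentially every $s\geqslant 2$ with $(s,q)$ small, not merely for $s\asymp q$. You then have two options, and neither yields the stated bound. (i) Use a $\gamma$-flexible version of the $\mu$-bound (this is what the paper's Lemma \ref{Gamma-Mobius} provides), paying a factor $\gamma \asymp s/(s,q)$ per term: then the middle term contributes $\sum_{s\leqslant S_{0},\,\mathrm{sqfull}} \frac{s}{(s,q)}\cdot\frac{X/s}{(q/(s,q))^{1/2}} \gg \frac{X}{q^{1/2}}\,S_{0}^{1/2}$ (already when $(s,q)=1$), while the trivial tail over squarefull $s>S_{0}$ is $\asymp X/S_{0}^{1/2}$; balancing gives at best $X/q^{1/4}$, not $X/q^{1/2}$, and the first and third terms similarly pick up positive powers of $S_{0}$ ($X^{4/5}S_0^{7/10}$, $X^{1/2}q^{1/2}S_0^{3/4}$) that overwhelm $X^{5/6}$ and $X^{1/2}q^{1/2}$. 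The density-$1/2$ sparseness of squarefull numbers is not enough to absorb a loss that grows linearly in $s$. (ii) Take a fresh Dirichlet approximant of $s\alpha$: this is not "harmless", because the new denominator is unrelated to $q$ and can equal $1$, in which case Theorem \ref{theorem 1} gives only $\asymp (X/s)(\log X)^{3}$, i.e.\ nothing beyond trivial, and summing over squarefull $s$ yields $\asymp X\log X$ with no $q$-saving at all. This loss of the dilated modulus is precisely why the paper restricts such "open one factor and apply the $\mu$-bound" steps to \emph{small} dilates ($m\leqslant M$ with $M$ a small power of $X$) in the proof of Theorem \ref{Mobius-k convolutions}, and why even there the resulting exponents are weaker than in Theorem \ref{theorem 1}.

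The paper's actual proof of Theorem \ref{theorem 2} avoids this entirely: it runs a Vaughan-type decomposition $\hat\mu=\hat\mu_{\leqslant U}+(\hat\mu_{>U}*\mu_{>V}*1)+(f*\mu_{\leqslant V})-(\hat\mu_{\leqslant U}*\mu_{\leqslant V}*1)$ and applies the bilinear Type I/Type II estimates (Lemmas \ref{lem1} and \ref{lem2}, after dyadic decomposition), with $U=V=\min\{X^{1/3},q,X/q\}$. The sparseness of squarefull numbers enters only through the $L^{2}$ bound $\lVert f\cdot 1_{n\leqslant X}\rVert_{2}\ll X^{1/4}$ in the Type II treatment of $f*\mu_{\leqslant V}$, which is what produces the exponent $5/6$; the $(\log X)^{5/2}$ comes from the $\lVert\cdot\rVert_{2}$ norms and the $\sqrt{\log 2q}$ in the Type II lemma, not from $\sum_{\mathrm{sqfull}} s^{-1/2}$. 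If you want to salvage your plan, you would need a bilinear (Type II) input for the range where $s$ is large, rather than the trivial bound — at which point you have essentially reconstructed the paper's argument.
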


Considering the convolution of the M\"{o}bius function with the characteristic function of squarefree integers, we define $\tilde{\mu}_k(n)$ in terms of coefficients of the following Dirichlet series. Let $k \geqslant 1$ and
\begin{align*}
    \sum_{n=1}^\infty\dfrac{\tilde{\mu}_k(n)}{n^s}=\dfrac{1}{\zeta(ks)},
\end{align*}
for $\operatorname{Re}(s) > 1/k$. As an example, for $k=2$, one has $\tilde{\mu}_2(n) = (\mu*\mu^2)(n)$, where $\mu^2$ is the indicator function for the squarefrees. Our next result provides an upper bound for the exponential sum
\begin{align*}
 S_{\tilde{\mu}_2}(X, \alpha) = \sum_{n\leqslant X }\tilde{\mu}_2(n)\e(n\alpha).
\end{align*}
\begin{theorem}\label{theorem 3}
Let $\alpha\in \mathbb{R}$ and consider a reduced fraction $a/q$ such that $|\alpha-a/q|\leqslant1/q^2$. Then for $X \geqslant 2$, we have
\begin{align*}
S_{\tilde{\mu}_2}(X, \alpha)\ll \bigg(X^{23/28} + \dfrac{X}{q^{1/4}} + X^{3/4}q^{1/4}\bigg)(\log X)^{10}.
\end{align*}
\end{theorem}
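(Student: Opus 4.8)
The plan is to follow the now-standard Vaughan-identity approach to sums $\sum_{n\le X} \tilde\mu_2(n) \e(n\alpha)$, exploiting the Dirichlet-series identity $\sum \tilde\mu_2(n) n^{-s} = \zeta(2s)^{-1}$. First I would record the convolution structure: since $1/\zeta(2s) = \sum_n \mu(n)/n^{2s}$, we have $\tilde\mu_2 = \mu \ast \mathbf{1}_{\square}$ where $\mathbf 1_\square$ is the indicator of perfect squares; equivalently $\tilde\mu_2(n)$ is supported on $n$ of the form $m\cdot d^2$ with $\mu$-weight, and in particular $\tilde\mu_2(n) = \sum_{d^2 \mid n} \mu(d) \cdot [\text{something}]$ — I would make the precise identity $\tilde\mu_2(n) = \sum_{a^2 b = n} \mu(a) \lambda_?(b)$ explicit by factoring $\zeta(2s)^{-1}$ against a convenient Euler product, or more cleanly just write $\tilde\mu_2 = \mu_\square \ast \delta$ style and split. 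Then a Vaughan-type decomposition of $1/\zeta(2s)$ with parameters $U,V$ (to be optimized at the end) expresses $S_{\tilde\mu_2}(X,\alpha)$ as a bounded number of (i) a ``small'' part with $n \le UV$ handled trivially, (ii) ``Type I'' bilinear sums $\sum_{d} a_d \sum_{m} \e(dm\alpha)$ with $a_d$ a short divisor-type coefficient and the inner sum a geometric progression bounded by $\min(X/d, \|d\alpha\|^{-1})$, and (iii) ``Type II'' bilinear sums $\sum_{d\sim D}\sum_{m \sim M} a_d b_m \e(dm\alpha)$ with both variables in dyadic ranges bounded away from the extremes.

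Next I would estimate each piece against the rational approximation $a/q$, $|\alpha - a/q| \le 1/q^2$. For the Type I sums, summing $\min(X/d, \|d\alpha\|^{-1})$ over $d \le D$ is the classical lemma (as in Vaughan's book) giving $\ll (X/q + D + q)\log(2qX)$, which after multiplying by the divisor-bounded coefficients and a crude bound on the number of dyadic ranges costs only log powers — here the $(\log X)^{10}$ is the slack. For the Type II sums I would apply Cauchy–Schwarz in the $m$-variable to reduce to $\sum_{d,d'} \sum_m \e((d-d')m\alpha)$, again bound the inner geometric sum by $\min(M, \|(d-d')\alpha\|^{-1})$, and invoke the same divisor-sum lemma to obtain, for a dyadic block with $d \sim D$, $m \sim M$, $DM \asymp X$, a bound of the shape $X^{1/2}\bigl(X^{1/2}q^{-1/2} + D^{1/2} + M^{1/2} + X^{1/2}q^{-1/2}\bigr)$ times logs; because the square root inside $\tilde\mu_2$ forces $D$ (the $\mu(a)$-variable squared) to effectively contribute $a \le X^{1/2}$, the ranges of the genuinely bilinear variables are constrained, and the worst case dyadic choice of $D$ (subject to $UV \le D \le X/V$ and the squarefree-$d^2$ truncation) produces the exponent $23/28$. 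The coefficients $1/q^{1/4}$ and $q^{1/4}$ then emerge as the $q$-dependence of the divisor-sum lemma after the Cauchy–Schwarz step, exactly as the powers $b_k, c_k$ do in Theorem \ref{Mobius-k convolutions}; one finally optimizes $U,V$ (powers of $X$) to balance the $X^{23/28}$ coming from the trivial/Type-I pieces against the Type-II exponent.

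The main obstacle is the bookkeeping around the ``squared'' variable $d^2$ that distinguishes $\tilde\mu_2$ from $\mu$ itself: one must carry through the Vaughan identity for $\zeta(2s)^{-1}$ rather than $\zeta(s)^{-1}$, which means every occurrence of a variable $d$ in the analysis really ranges over $d^2$, so the effective ``length'' of the multiplicative-function factor is $X^{1/2}$, not $X$. Getting the ranges of summation exactly right — in particular ensuring that after Cauchy–Schwarz the diagonal $d=d'$ contributes $\ll MD \cdot (\log)$ and the off-diagonal is governed by $\sum_{h \le D} \min(M X/h, \|h\alpha\|^{-1})$ with the correct $D$ — is where the $23/28$ (rather than, say, $4/5$ or $5/6$) is pinned down, and it requires careful splitting of the Type II range at $D = X^{?}$. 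I expect the rest — the Type I estimate, the trivial bound on the short initial segment, and collecting the $(\log X)^{10}$ — to be routine given the tools already developed for Theorems \ref{theorem 1}, \ref{Mobius-k convolutions}, and \ref{theorem 2}.
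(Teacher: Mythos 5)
Your overall framework (a Vaughan-type splitting with parameters $U,V$, Type~I ranges handled by $\sum_{d}\min(X/d,\norm{d\alpha}^{-1})$, Type~II ranges by Cauchy--Schwarz) matches the skeleton of the paper's argument for the easy pieces, but the write-up has one error and one genuine gap. The error: $\mu*\mathbf{1}_{\square}$ has Dirichlet series $\zeta(2s)/\zeta(s)$ and is the Liouville function, not $\tilde{\mu}_2$. The identity the paper actually uses is $\tilde{\mu}_2=\mu*\mu^2$ with $\mu^2$ the squarefree indicator (equivalently $\tilde{\mu}_2(n)=\mu(m)$ when $n=m^2$ and $0$ otherwise), and its decomposition is
\begin{align*}
\tilde{\mu}_2 = \tilde{\mu}_2\cdot 1_{\leqslant U} + (\tilde{\mu}_2\cdot 1_{> U}*\mu_{>V}*1) + (\mu^2*\mu_{\leqslant V}) - (\tilde{\mu}_2\cdot 1_{\leqslant U}*\mu_{\leqslant V}*1),
\end{align*}
whose first, second and fourth terms are estimated exactly as in the proof of Theorem \ref{theorem 1}. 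Note also that if you run Vaughan purely for $\zeta(2s)^{-1}$, as your plan suggests, then the ``$*1$'' factor is supported on squares as well, so the inner sums are incomplete quadratic Gauss-type sums rather than geometric progressions, and the classical $\min(X/d,\norm{d\alpha}^{-1})$ lemma does not apply to them as stated.

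The genuine gap is in the decisive piece $(\mu^2*\mu_{\leqslant V})$: writing $\mu^2(m)=\sum_{ab^2=m}\mu(b)$ reduces it to
\begin{align*}
\sum_{b\leqslant X^{1/2}}\sum_{m\leqslant V}\min\bigg\{\frac{X}{b^2m},\frac{1}{\norm{b^2m\alpha}}\bigg\},
\end{align*}
where the modulus variable is genuinely quadratic and $b$ runs up to $X^{1/2}$. Your tools (Cauchy--Schwarz in the linear variable plus the classical divisor-sum lemma) handle only the small-$b$ range, by absorbing $b^2m$ into a single variable of size $B^2V$ with $\tau_3$ weights; near $b\asymp X^{1/2}$ this is trivial, and differencing $d-d'$ linearly after Cauchy--Schwarz never exploits the square structure. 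The paper closes the range $B<b\leqslant X^{1/2}$ with Mikawa's estimate (Lemma \ref{lem: Mik}) for $\sum_{m\sim M}\sum_{j\sim J}\tau_3(j)\min(X/(m^2j),\norm{\alpha m^2 j}^{-1})$, applied dyadically; the exponents $23/28$ and $q^{\pm1/4}$ in the statement are inherited from that lemma after choosing $B=(X/V)^{2/9}$ and $U=V=\min\{X^{5/14},q,X/q\}$ --- they do not ``emerge'' from standard Type~I/Type~II bookkeeping, and your proposal offers no substitute (e.g.\ Weyl/van der Corput differencing in the square variable). Separately, had you pushed the support observation to its conclusion, you would get $S_{\tilde{\mu}_2}(X,\alpha)=\sum_{m\leqslant\sqrt{X}}\mu(m)\e(m^2\alpha)\ll X^{1/2}$, which already implies the stated bound by a much shorter route than the paper's; but as written your argument commits neither to this nor to the convolution identity the paper uses, so it does not yet constitute a proof.
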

\begin{remark}
One can apply the method from the proof of Theorem \ref{Mobius-k convolutions} to generalize the result in Theorem \ref{theorem 3} to any $k \geqslant 1$. 
\end{remark}
\subsection{Structure of the paper} The paper is organized as follows. In Section \ref{sec:preliminaries}, we introduce some preliminary lemmas, such as Type I and Type II estimates on exponential sums involving Dirichlet convolutions, a result on the distribution of the M\"{o}bius function in arithmetic progressions, a divisor sum estimate and an unconditional upper bound on $1/\zeta(s)$ in the critical strip along specific vertical line segments. Section \ref{sec:proofsexpsums} is devoted to the proofs of Theorems \ref{theorem 1}, \ref{theorem 2} and \ref{theorem 3}. The proof of Theorem \ref{Mobius-k convolutions} is singled out in Section \ref{sec:secondexpsumproof}. In Section \ref{sec:partitions}, we move on to the theory of partitions and set up the proof of Theorem \ref{Partition}. Section \ref{sec:nonprincipal} and Section \ref{sec:minorarcs} will contain the details of the major and minor arc estimates respectively. Finally, in \ref{sec:principalarc}, we present the proof of Theorem \ref{lem:principalmajor}.

\section{Preliminary Lemmas} \label{sec:preliminaries}
In this section, we start by establishing some standard notations. Then we collect some necessary results from the literature and proceed to prove some preliminary lemmas. 
\subsection{Notations} We employ some standard notation that will be used throughout the paper.

\begin{itemize}
    \item Throughout the paper, the expressions $f(X)=O(g(X))$, $f(X) \ll g(X)$, and $g(X) \gg f(X)$ are equivalent to the statement that $|f(X)| \leq C|g(X)|$ for all sufficiently large $X$, where $C>0$ is an absolute constant. A subscript of the form $\ll_{\alpha}$ means the implied constant may depend on the parameter $\alpha$. Dependence on several parameters is indicated in an analogous manner, as in $\ll_{\alpha, \lambda}$.
    \item  The notation $f = o(g)$ as $x\to a$  means
that $\lim_{x\to a} f(x)/g(x) = 0$ and $f\sim g$ as $x\to a$ denotes $\lim_{x\to a} f(x)/g(x)= 1$.
    \item We denote the set of real numbers by $\mathbb{R}$, the set of rational numbers by $\mathbb{Q}$, the set of integers by $\mathbb{Z}$, the set of natural numbers by $\mathbb{N}$ and the set of complex numbers by $\mathbb{C}$.
\item For any set $\mathcal{A}$, $\operatorname{card}(\mathcal{A})$ denotes the cardinality of the set $\mathcal{A}$.
\item The notation $\e(x)$ stands for $\exp(2\pi i x)$.
\item Dyadic sums are represented by $\sum_{n \sim N} f(n) = \sum_{N < n \leqslant 2N} f(n)$.
\item The divisor function is denoted by $d(n)$. The three-fold divisor function $\tau_3(n)$ is defined by the coefficients of the Dirichlet series 
\[\zeta^3(s) = \sum_{n=1}^{\infty}\frac{\tau_3(n)}{n^s}, \quad \textrm{for  } \operatorname{Re}(s) > 1.
\]
\end{itemize}
\subsection{Exponential sums and Dirichlet convolutions}
We first state the following two lemmas due to Koukoulopoulos, that provide bounds on exponential sums involving Dirichlet convolutions.
\begin{lemma}[Type I estimate]\label{lem1}
Let $f\colon \mathbb{N}\to \mathbb{C}$ be supported on $[1, y]$, $v\geqslant 2$, $X\geqslant 2$, $\alpha\in \mathbb{R}$ and $a/q$ be a reduced fraction such that $|\alpha-a/q|\leqslant 1/q^2$. Then
\begin{align*}
\sum_{n\leqslant X}(f*\log^v)(n)\e(n\alpha)\ll \bigg(y + \dfrac{X}{q}+q\bigg)(\log X)^{v+1}\|f\|_{\infty}.
\end{align*}
\end{lemma}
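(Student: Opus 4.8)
The plan is to reduce the bound to a classical Diophantine sum via the hyperbola-splitting trick. Writing out the Dirichlet convolution and using that $f$ is supported on $[1,y]$,
\[
\sum_{n\leqslant X}(f*\log^v)(n)\,\e(n\alpha)=\sum_{d\leqslant y}f(d)\sum_{m\leqslant X/d}(\log m)^v\,\e(dm\alpha),
\]
so it is enough to estimate the inner sum $\sum_{m\leqslant X/d}(\log m)^v\,\e(dm\alpha)$ uniformly in $d$, sum over $d\leqslant y$, and pull out $\|f\|_\infty$.

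First I would dispatch the inner sum by partial summation against the weight $m\mapsto(\log m)^v$, which is nonnegative and nondecreasing on $[1,\infty)$, vanishes at $m=1$, and is bounded by $(\log X)^v$ throughout $[1,X/d]$. Combined with the elementary bound $\bigl|\sum_{m\leqslant t}\e(m\beta)\bigr|\ll\min(t,\|\beta\|^{-1})$, where $\|\cdot\|$ denotes the distance to the nearest integer, applied with $\beta=d\alpha$, Abel summation yields
\[
\Bigl|\sum_{m\leqslant X/d}(\log m)^v\,\e(dm\alpha)\Bigr|\ll(\log X)^v\min\!\Bigl(\tfrac{X}{d},\tfrac{1}{\|d\alpha\|}\Bigr),
\]
and therefore
\[
\Bigl|\sum_{n\leqslant X}(f*\log^v)(n)\,\e(n\alpha)\Bigr|\ll(\log X)^v\,\|f\|_\infty\sum_{d\leqslant y}\min\!\Bigl(\tfrac{X}{d},\tfrac{1}{\|d\alpha\|}\Bigr).
\]

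The remaining ingredient is the classical estimate for this Diophantine sum under the rational approximation hypothesis: since $a/q$ is reduced and $|\alpha-a/q|\leqslant q^{-2}$,
\[
\sum_{d\leqslant y}\min\!\Bigl(\tfrac{X}{d},\tfrac{1}{\|d\alpha\|}\Bigr)\ll\Bigl(\tfrac{X}{q}+y+q\Bigr)\log(2qX)
\]
(see, for instance, Vaughan's \emph{The Hardy--Littlewood Method} or Montgomery's \emph{Ten Lectures}). For $q\leqslant X$ one has $\log(2qX)\ll\log X$, and substituting into the previous display gives exactly $\ll(y+X/q+q)(\log X)^{v+1}\|f\|_\infty$; for $q>X$ the estimate is immediate from the trivial bound $\sum_{n\leqslant X}|(f*\log^v)(n)|\ll\|f\|_\infty\,X(\log X)^{v+1}\leqslant\|f\|_\infty\,q(\log X)^{v+1}$.

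The one step that is not mere bookkeeping is the Diophantine sum estimate in the last display, which I would quote from the literature rather than reprove; its proof is the familiar one — partition $1\leqslant d\leqslant y$ into $O(1+y/q)$ consecutive blocks of length $q$, observe that on each block $d\mapsto ad\bmod q$ is a bijection onto $\Z/q\Z$ while $d\alpha$ drifts by less than $1/q$ (because $|\alpha-a/q|\leqslant q^{-2}$), so that within a block the quantities $\|d\alpha\|$ match the sizes of $\|j/q\|$ for $0\leqslant j<q$ up to bounded multiplicity, and then estimate $\min(X/d,\|d\alpha\|^{-1})$ block by block using $\sum_{1\leqslant j\leqslant q/2}q/j\ll q\log q$. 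The hyperbola-splitting step and the partial summation against $(\log m)^v$ are routine; in particular the hypothesis $v\geqslant 2$ plays no essential role and only fixes the exponent of $\log X$ in the final bound.
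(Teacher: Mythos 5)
Your argument is correct and is essentially the standard proof of the result the paper simply cites ([K2019, Theorem 23.5]): expand the convolution, remove the $(\log m)^v$ weight by partial summation against the geometric-series bound, and reduce to the classical estimate for $\sum_{d\leqslant y}\min\bigl(X/d,\|d\alpha\|^{-1}\bigr)$ — exactly the reduction the paper itself invokes when it adapts this lemma in the proof of Lemma \ref{lem5}. The bookkeeping (handling $q>X$ trivially, and $\log(2qX)\ll\log X$ when $q\leqslant X$) is also in order, so there is nothing to correct.
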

\begin{proof}
See \cite[Theorem 23.5]{K2019}.
\end{proof}

\begin{lemma}[Type II estimate]\label{lem2}
Let $f, g\colon \mathbb{N}\to \mathbb{C}$ be two arithmetic functions supported on $[1, y]$ and $[1, z]$, respectively. Let $X\geqslant 2$, $\alpha\in \mathbb{R}$ and consider a reduced fraction $a/q$ such that $|\alpha-a/q|\leqslant 1/q^2$. Then
\begin{align*}
\sum_{n\leqslant X}(f*g)(n)\e(n\alpha)\ll \bigg(q + y +z + \dfrac{yz}{q}\bigg)^{1/2}\sqrt{\log 2q}\cdot \|f\|_2\|g\|_2.
\end{align*}
\end{lemma}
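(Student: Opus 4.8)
The plan is to run the standard Cauchy--Schwarz (bilinear) argument. First I would write the Dirichlet convolution out explicitly and peel off the outer variable: since $f$ is supported on $[1,y]$ and $g$ on $[1,z]$,
\begin{align*}
\sum_{n\le X}(f*g)(n)\e(n\alpha)=\sum_{a\le y}f(a)\sum_{\substack{b\le z\\ ab\le X}}g(b)\e(ab\alpha),
\end{align*}
and then apply Cauchy--Schwarz in $a$ to obtain $\bigl|\sum_{n\le X}(f*g)(n)\e(n\alpha)\bigr|^2\le \|f\|_2^2\sum_{a\le y}\bigl|\sum_{b\le z,\,ab\le X}g(b)\e(ab\alpha)\bigr|^2$.

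Next I would open the square and interchange the order of summation, so the inner quantity becomes $\sum_{b_1,b_2\le z}g(b_1)\overline{g(b_2)}\sum_a\e\!\bigl(a(b_1-b_2)\alpha\bigr)$, where $a$ runs over $1\le a\le\min(y,X/b_1,X/b_2)$. The point is that this range is still an \emph{interval} of integers (the truncation $ab\le X$ only shortens it), so the inner exponential sum is a geometric series bounded by $\min\!\bigl(y,\tfrac12\lVert(b_1-b_2)\alpha\rVert^{-1}\bigr)$, with $\lVert\cdot\rVert$ denoting distance to the nearest integer. Separating the diagonal $b_1=b_2$ (contribution $\le y\|g\|_2^2$), applying $2|g(b_1)g(b_2)|\le|g(b_1)|^2+|g(b_2)|^2$, and substituting $h=b_1-b_2$, the whole thing collapses to $\ll\|g\|_2^2\bigl(y+\sum_{1\le h<z}\min(y,\lVert h\alpha\rVert^{-1})\bigr)$.

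The only genuine input beyond bookkeeping is the classical estimate for the sawtooth sum: for a reduced fraction $a/q$ with $|\alpha-a/q|\le q^{-2}$ one has $\sum_{1\le h\le H}\min(U,\lVert h\alpha\rVert^{-1})\ll(1+H/q)(U+q\log 2q)$, which I would obtain by partitioning $[1,H]$ into $O(1+H/q)$ runs of $q$ consecutive integers, on each of which the points $h\alpha$ are $\asymp 1/q$-separated modulo $1$. Taking $H=z$ and $U=y$ and simplifying gives $y+\sum_{1\le h<z}\min(y,\lVert h\alpha\rVert^{-1})\ll(q+y+z+yz/q)\log 2q$; combining this with the Cauchy--Schwarz step and taking square roots yields exactly the stated bound. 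The main (mild) obstacle is simply checking that the $X$-truncation does not destroy the interval structure needed for the geometric-sum bound — it does not — so the argument is routine; alternatively, the lemma is available verbatim from Koukoulopoulos \cite{K2019}.
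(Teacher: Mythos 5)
Your argument is correct: the Cauchy--Schwarz step, the geometric-series bound $\min(y,\tfrac12\lVert(b_1-b_2)\alpha\rVert^{-1})$ (the $X$-truncation indeed only shortens the interval of $a$'s), the diagonal/off-diagonal split, and the sawtooth estimate $\sum_{h\le H}\min(U,\lVert h\alpha\rVert^{-1})\ll(1+H/q)(U+q\log 2q)$ combine to give exactly the stated bound. The paper offers no proof of its own here --- it simply cites Koukoulopoulos \cite{K2019}, Theorem 23.6 --- and your write-up is essentially the standard proof of that result.
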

\begin{proof}
    See \cite[Theorem 23.6]{K2019}.
\end{proof}

We would like to establish more general versions of Lemmas \ref{lem1} and \ref{lem2}, in the case when $\alpha\in \mathbb{R}$ and there exists some reduced fraction $a/q$ such that $|\alpha-a/q|\leqslant\gamma/q^2$, for some $\gamma \geqslant 1$. In this regard, we first prove the following proposition.

\begin{proposition}\label{Prop7}
Let $\alpha\in \mathbb{R}$ and consider a reduced fraction $a/q$ such that $|\alpha-a/q|\leqslant\gamma/q^2$ for some $\gamma \geqslant 1$. Then for each $m_1 \in \{1,2,\dots,q\}$, there exists at most $14 \gamma$ choices of $m_2 \in \{1,2,\dots,q \}$ such that
\begin{align} \label{eq:auxiliaryexpsum1}
\bigg \lvert \norm{m_1\alpha}- \norm{m_2\alpha} \bigg \rvert < \frac{1}{q},
\end{align}
where $\norm{x}$ denotes the smallest distance of $x$ to an integer.
\end{proposition}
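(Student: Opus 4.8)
The plan is to exploit the standard fact that if $\alpha$ is well-approximated by a reduced fraction $a/q$, then the multiples $m\alpha$ for $m \in \{1,\dots,q\}$ are spread out modulo $1$. First I would write $\alpha = a/q + \beta$ with $|\beta| \leqslant \gamma/q^2$. For an integer $m$ with $1 \leqslant m \leqslant q$, we have $m\alpha = ma/q + m\beta$, where $|m\beta| \leqslant \gamma/q$. Since $\gcd(a,q)=1$, as $m$ ranges over $\{1,\dots,q\}$, the residues $ma \imod{q}$ range over a complete residue system modulo $q$; hence the fractional parts $\{ma/q\}$ are exactly the points $\{0, 1/q, 2/q, \dots, (q-1)/q\}$ in some order. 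The perturbation by $m\beta$ moves each such point by at most $\gamma/q$ in $\R/\Z$. Thus each $\|m\alpha\|$ lies within $\gamma/q$ of one of the grid values $j/q$ (or $1 - j/q$, after folding by the distance-to-nearest-integer), and distinct $m$'s correspond to distinct grid points $j/q$.

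Next I would translate the condition \eqref{eq:auxiliaryexpsum1} into a statement about the grid indices. Suppose $m_1\alpha$ corresponds to grid value $j_1/q$ (meaning $\{m_1 a/q\} = j_1/q$) and $m_2 \alpha$ to $j_2/q$. Then $\|m_1\alpha\|$ is within $\gamma/q$ of $\min(j_1, q-j_1)/q$ and similarly for $m_2$. If $|\|m_1\alpha\| - \|m_2\alpha\|| < 1/q$, then by the triangle inequality
\[
\left| \frac{\min(j_1,q-j_1)}{q} - \frac{\min(j_2,q-j_2)}{q} \right| < \frac{1}{q} + \frac{2\gamma}{q} \leqslant \frac{3\gamma}{q} + \frac{1}{q},
\]
so $|\min(j_1,q-j_1) - \min(j_2,q-j_2)| < 3\gamma + 1 \leqslant 4\gamma$ (using $\gamma \geqslant 1$). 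For a fixed value of $\min(j,q-j)$, there are at most $2$ choices of $j \in \{0,\dots,q-1\}$ (namely $j$ and $q-j$). Hence the number of admissible $j_2$ is at most $2 \cdot (2\lfloor 4\gamma \rfloor + 1)$, and since $m \mapsto j$ is a bijection on $\{1,\dots,q\}$ (up to the index shift from $\{ma/q\}$), the number of admissible $m_2$ is bounded by the same quantity. A crude bound $2(2\lfloor 4\gamma\rfloor + 1) \leqslant 2(8\gamma + 1) \leqslant 14\gamma$ for $\gamma \geqslant 1$ closes the argument; I would adjust the constants in the intermediate inequalities slightly to land exactly at $14\gamma$, or simply carry a clean constant and note it suffices.

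The main obstacle, such as it is, is bookkeeping around the folding operation $x \mapsto \|x\|$: a point near $j/q$ and a point near $(q-j)/q$ have nearly equal distance-to-nearest-integer, so the map from $\|m\alpha\|$ back to the grid index is roughly two-to-one, and one must be careful not to lose (or double-count) a factor of $2$. I would handle this by working on the circle $\R/\Z$ with the grid $\{0,1/q,\dots,(q-1)/q\}$ throughout, reducing \eqref{eq:auxiliaryexpsum1} to: the images of $m_1\alpha$ and $m_2\alpha$ on the circle are either within $O(\gamma/q)$ of each other or within $O(\gamma/q)$ of being reflections through $0$ (or through $1/2$). Both cases pin the grid index $j_2$ down to an interval of length $O(\gamma)$, and the bijectivity of $m \mapsto ma \imod q$ then gives the count. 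A secondary minor point is the edge case $q$ small relative to $\gamma$, where the bound $14\gamma$ could exceed $q$ and the statement is trivially true since there are only $q$ choices of $m_2$ to begin with.
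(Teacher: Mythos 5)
Your argument is essentially the paper's own proof: write $\alpha=a/q+\beta$ with $|\beta|\leqslant\gamma/q^2$, observe that $\norm{m\alpha}$ differs from $\norm{ma/q}$ by at most $\gamma/q$, reduce \eqref{eq:auxiliaryexpsum1} to counting the integers $\ell_2=q\norm{m_2a/q}\in\{0,1,\dots,\lfloor q/2\rfloor\}$ close to $\ell_1$, and use that each value of $\ell_2$ arises from at most two residues $m_2$ because $(a,q)=1$. Your constants as written give $2(8\gamma+1)>14\gamma$, but the tightening you anticipate is exactly what the paper does: bound the difference of the two distances by $(2\gamma+1)/q\leqslant 3\gamma/q$ (using $\gamma\geqslant 1$), so there are at most $6\gamma+1\leqslant 7\gamma$ admissible $\ell_2$ and hence at most $14\gamma$ admissible $m_2$.
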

\begin{proof}
Fix $m_1 \in \{1,2,\dots, q\}$ and suppose $m_2 \in \{1,2\dots,q\}$ such that \eqref{eq:auxiliaryexpsum1} is satisfied.
%\begin{align}\label{Eq 1}
%\bigg \lvert \norm{m_1\alpha}- \norm{m_2\alpha} \bigg \rvert < \frac{1}{q}.
%\end{align}
Writing $\alpha=a/q+\beta$ with $\lvert \beta \rvert  \leqslant\gamma/q^2$, we have
\[m_1 \alpha =  \frac{m_1a}{q}+m_1\beta \quad \textrm{ and} \quad m_2 \alpha =  \frac{m_2a}{q}+m_2\beta. \]
Therefore, we obtain
\begin{align}\label{Eq 2}
\bigg \lvert \norm{m_1\alpha}- \norm{\frac{m_1 a}{q}} \bigg \rvert \leqslant\lvert m_1\beta \rvert \leqslant\frac{\gamma}{q} 
\quad \textnormal{and} \quad 
\bigg \lvert \norm{m_2\alpha}- \norm{\frac{m_2 a}{q}} \bigg \rvert \leqslant\lvert m_2\beta \rvert \leqslant\frac{\gamma}{q}.
\end{align}
%and
%\begin{align}\label{Eq 3}
%\bigg \lvert \norm{m_2\alpha}- \norm{\frac{m_2 a}{q}} \bigg \rvert \leqslant\lvert m_2\beta \rvert \leqslant\frac{\gamma}{q}.
%\end{align}
From \eqref{eq:auxiliaryexpsum1} and \eqref{Eq 2}, 
%and \eqref{Eq 3}
it follows that
\begin{align}\label{Eq 4}
\bigg \lvert \norm{\frac{m_1 a}{q}}- \norm{\frac{m_2 a}{q}} \bigg \rvert \leqslant \frac{2\gamma+1}{q} \leqslant\frac{3\gamma}{q}.
\end{align}
Next, we write 
\[
\norm{\frac{m_1 a}{q}} = \frac{\ell_1}{q} \quad \textnormal{and} \quad \norm{\frac{m_2 a}{q}} = \frac{\ell_2}{q},
\]
where $\ell_1, \ell_2 \in \{0,1,\dots,\lfloor q/2\rfloor \}$. Note that when $m_1$ is fixed, $\ell_1$ is fixed. Also, for each choice of $\ell_2$, there's at most two different choices of $m_2$. Hence, in order to satisfy \eqref{Eq 4}, there's at most $7\gamma$ choices of $\ell_2$. This implies there are at most $14 \gamma$ choices of $m_2$ which satisfy \eqref{eq:auxiliaryexpsum1}.
\end{proof}

With Proposition \ref{Prop7} in hand, we have the following two lemmas.

\begin{lemma}\label{lem5}
Let $f\colon \mathbb{N}\to \mathbb{C}$ be supported on $[1, y]$, $v\geqslant 2$, $X\geqslant 2$, $\alpha\in \mathbb{R}$ and $a/q$ be a reduced fraction such that $|\alpha-a/q|\leqslant\gamma/q^2$ for some $\gamma \geqslant 1$. Then
\begin{align}\label{Type 1 with Gamma}
    \sum_{n\leqslant X}(f*\log^v)(n)\e(n\alpha)\ll \gamma \bigg(y + \dfrac{X}{q}+q\bigg)(\log x)^{v+1}\|f\|_{\infty},
\end{align}
where the implied constant in \eqref{Type 1 with Gamma} is absolute.
\end{lemma}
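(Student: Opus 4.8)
The plan is to reduce the case of a weak rational approximation (denominator essentially $q$ but with an extra factor $\gamma$ in the error term) to the clean Type I estimate of Lemma \ref{lem1}, which applies only when $|\alpha - a/q| \le 1/q^2$. The basic idea is that a fraction $a/q$ with $|\alpha - a/q| \le \gamma/q^2$ can be refined, via the theory of continued fractions (or Dirichlet's approximation theorem applied at scale $q$), to a reduced fraction $a'/q'$ with $|\alpha - a'/q'| \le 1/(q'Q)$ for a suitable parameter $Q$; choosing $Q \asymp q$ one obtains $q' \le q$ and $q' \gg q/\gamma$ (or more precisely, $q'$ cannot be too small, otherwise the two approximations $a/q$ and $a'/q'$ would force $q \le \gamma q'$). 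First I would make this refinement explicit: given $a/q$ with $|\alpha - a/q| \le \gamma/q^2$, produce $a'/q'$ reduced with $1 \le q' \le q$ and $|\alpha - a'/q'| \le 1/q'^2$, and with $q' \ge q/(C\gamma)$ for an absolute constant $C$ unless $q'$ is already large, handling the degenerate cases where $\alpha$ is very close to an integer separately.

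Next I would simply apply Lemma \ref{lem1} with the refined fraction $a'/q'$ in place of $a/q$, obtaining
\begin{align*}
\sum_{n\leqslant X}(f*\log^v)(n)\e(n\alpha)\ll \bigg(y + \dfrac{X}{q'}+q'\bigg)(\log X)^{v+1}\|f\|_{\infty}.
\end{align*}
Then I would bound the right-hand side in terms of $q$: since $q' \le q$ we have $q' \le q$, and since $q' \gg q/\gamma$ we have $X/q' \ll \gamma X/q$; thus $y + X/q' + q' \ll \gamma(y + X/q + q)$, which gives exactly \eqref{Type 1 with Gamma}. The remark that the implied constant is absolute follows because Lemma \ref{lem1}'s constant is absolute and the refinement step introduces only an absolute constant $C$ in front of $\gamma$.

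The main obstacle — and the place where Proposition \ref{Prop7} is presumably meant to be used, rather than the continued-fraction refinement sketched above — is making the passage from $q$ to $q'$ robust: one must ensure that the refined denominator $q'$ is genuinely comparable to $q/\gamma$ and not pathologically small, and one must check the edge cases ($q=1$, or $\alpha$ within $\gamma/q^2$ of an integer, or $q' $ small). An alternative route, closer in spirit to Proposition \ref{Prop7}, is to not refine the fraction at all but instead revisit the proof of Lemma \ref{lem1} and track where the hypothesis $|\alpha - a/q|\le 1/q^2$ is used: it enters through a counting step bounding the number of $m \le q$ for which $\|m\alpha\|$ lies in a short interval, and Proposition \ref{Prop7} shows this count only inflates by a factor $O(\gamma)$ when the approximation is weakened to $\gamma/q^2$. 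Either way, the analytic content is already contained in Lemma \ref{lem1}; the work is purely the bookkeeping that converts the weaker Diophantine hypothesis into an extra factor of $\gamma$, and I would present whichever of the two reductions is shorter, most likely the Proposition \ref{Prop7} route since that proposition has just been established for exactly this purpose.
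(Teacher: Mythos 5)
Your preferred route (the second one) is exactly the paper's proof: the authors re-run the argument of Koukoulopoulos' Theorem 23.5, observing that the Diophantine hypothesis enters only through the sum $\sum_{k\leqslant y}\min\{X/k,1/\Vert k\alpha\Vert\}$, which they bound blockwise over intervals of length $q$, using Proposition \ref{Prop7} to show that each short interval of values of $\Vert k\alpha\Vert$ captures at most $O(\gamma)$ values of $k$ per block; this inflates each block bound, and hence the final estimate, by the factor $\gamma$. So on that route your plan is correct and essentially identical to the paper's. Your first route (refining $a/q$ to a reduced $a'/q'$ with $|\alpha-a'/q'|\leqslant 1/q'^2$ and then quoting Lemma \ref{lem1} directly) is a genuinely different reduction and can be made to work, but as sketched it has a gap: if you take Dirichlet's theorem at scale $Q=q$, so that $|\alpha-a'/q'|\leqslant 1/(q'q)$, the triangle inequality gives $1/(qq')\leqslant \gamma/q^2+1/(qq')$ when $a'/q'\neq a/q$, which yields no lower bound on $q'$ at all; the fix is to approximate at a slightly larger scale, e.g.\ $Q=2q$, so that $1/(qq')\leqslant \gamma/q^2+1/(2qq')$ forces $q'\geqslant q/(2\gamma)$ (unless $a'/q'=a/q$), after which $y+X/q'+q'\ll\gamma(y+X/q+q)$ follows. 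The paper does not use this refinement; its Proposition \ref{Prop7} argument has the advantage of applying verbatim to the Type II estimate (Lemma \ref{lem6}) as well, whereas the refinement trick would also work there but only after the same careful choice of scale.
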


\begin{lemma}\label{lem6}
Let $f, g\colon \mathbb{N}\to \mathbb{C}$ be two arithmetic functions supported on $[1, y]$ and $[1, z]$, respectively. Let $X\geqslant 2$, $\alpha\in \mathbb{R}$ and consider a reduced fraction $a/q$ such that $|\alpha-a/q|\leqslant\gamma/q^2$ for some $\gamma \geqslant 1$. Then
\begin{align}\label{Type 2 with Gamma}
    \sum_{n\leqslant X}(f*g)(n)\e(n\alpha)\ll \gamma \bigg(q + y +z + \dfrac{yz}{q}\bigg)^{1/2}\sqrt{\log 2q}\cdot \|f\|_2\|g\|_2,
\end{align}
where the implied constant in \eqref{Type 2 with Gamma} is absolute.
\end{lemma}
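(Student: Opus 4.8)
The plan is to deduce Lemma \ref{lem6} from the Type II estimate of Lemma \ref{lem2} by a dyadic-type splitting of the frequency $\alpha$ driven by Proposition \ref{Prop7}. The obstruction is that when $|\alpha - a/q| \le \gamma/q^2$ with $\gamma \ge 1$, the fraction $a/q$ need not be a good rational approximation in the sense required by Lemma \ref{lem2}; we only know the weaker bound $|\alpha - a/q| \le \gamma/q^2$. The standard remedy is to extract from $\alpha$ a genuine convergent of its continued fraction expansion, i.e.\ a reduced fraction $a'/q'$ with $q \le q' \le \gamma q$ (roughly) and $|\alpha - a'/q'| \le 1/q'^2$; however, the cleaner route taken here, given that Proposition \ref{Prop7} has been isolated, is to keep $a/q$ and instead observe that the numbers $\{m\alpha\}$ for $m = 1,\dots,q$ are spread out in a controlled way, so that the proof of Lemma \ref{lem2} loses only a factor of $\gamma$.

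Concretely, first I would recall how Lemma \ref{lem2} is proved: after Cauchy--Schwarz and opening the square, one is led to bound a sum of the shape $\sum_{n,n' \le z} \bigl(\min(X, \|(n-n')\alpha\|^{-1})\bigr)$ (or a weighted variant), and the key input is that for a good approximation $a/q$, the quantity $\|m\alpha\|$ is bounded below by something like $1/(2q)$ for most $m$ in any interval of length $q$, so that the $\min$-sum over an interval of length $q$ is $O(q + X/q)\log(2q)$ (up to the relevant normalizations). Proposition \ref{Prop7} supplies exactly the substitute fact we need when $a/q$ is only a $\gamma$-approximation: for each fixed residue $m_1 \bmod q$, there are at most $14\gamma$ values of $m_2 \in \{1,\dots,q\}$ with $\bigl|\,\|m_1\alpha\| - \|m_2\alpha\|\,\bigr| < 1/q$. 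This means that in each window of length $q$, the values $\|m\alpha\|$ cluster into groups of size $\le 14\gamma$ around a $(1/q)$-separated set, and therefore the sum $\sum_{m \sim q} \min(X, \|m\alpha\|^{-1})$ is at most $14\gamma$ times the corresponding sum for a genuinely $(1/q)$-separated family, which is $O\bigl((q + X/q)\log(2q)\bigr)$.

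The steps, in order: (1) State and prove (or cite from the proof of Lemma \ref{lem2} in \cite{K2019}) the intermediate claim that, modulo Cauchy--Schwarz and dyadic decomposition, Lemma \ref{lem6} reduces to an estimate for $\sum_{1 \le m \le M} \min\!\bigl(X,\ \|m\alpha\|^{-1}\bigr)$ with $M \asymp q + y + z + yz/q$. (2) Partition $\{1,\dots,M\}$ into $O(M/q + 1)$ blocks of length $q$; on each block, use Proposition \ref{Prop7} to group the $\|m\alpha\|$ into at most $14\gamma$ subfamilies each of which is $(1/q)$-separated in $[0,1/2]$, hence contributing $O\bigl(q + X/q\bigr)$ each after summing the harmonic-type series, for a total of $O\bigl(\gamma(q + X/q)\bigr)$ per block. (3) Sum over the $O(M/q + 1)$ blocks to get $O\bigl(\gamma(M + X)\bigr)$ times logarithmic factors, and feed this back through the Cauchy--Schwarz step; the $\gamma$ comes out linearly, and the $\sqrt{\log 2q}$ and $\|f\|_2 \|g\|_2$ factors are carried along unchanged, yielding exactly \eqref{Type 2 with Gamma}. (4) Check the implied constant is absolute: all inputs (Proposition \ref{Prop7}'s constant $14$, the harmonic sum, Cauchy--Schwarz) have absolute constants, so this is immediate.

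I expect the main obstacle to be purely bookkeeping: one must be careful that the quantity actually appearing in the proof of Lemma \ref{lem2} is a sum over \emph{differences} $m = n - n'$ of residues, so the relevant $M$ and the relevant separation statement must be matched up correctly with the hypotheses of Proposition \ref{Prop7} (which is phrased for $m_1, m_2 \in \{1,\dots,q\}$ and the difference of \emph{norms}, not the norm of the difference). A clean way to finesse this is to note $\|m_1\alpha - m_2\alpha\| \le \|m_1\alpha\| + \|m_2\alpha\|$ and, more usefully, that small $\|(m_1 - m_2)\alpha\|$ forces $\bigl|\,\|m_1\alpha\| - \|m_2\alpha\|\,\bigr|$ to be small, so Proposition \ref{Prop7} applies after a reduction $m \mapsto m \bmod q$. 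The analogous reduction of Lemma \ref{lem5} to Lemma \ref{lem1} is strictly easier, since the Type I estimate rests on the same kind of $\min$-sum over an interval of length $q$, and there the factor $\gamma$ enters identically; I would present that one first (or in parallel) as a warm-up, since it involves no Cauchy--Schwarz. Both proofs together should be short once Proposition \ref{Prop7} is in hand.
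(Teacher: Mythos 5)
Your proposal is correct and takes essentially the same route as the paper: the paper proves Lemma \ref{lem5} by precisely this block-of-length-$q$ decomposition of the $\min$-sum together with Proposition \ref{Prop7} to show each $(1/q)$-interval captures only $O(\gamma)$ of the values $\|m\alpha\|$, and then notes that Lemma \ref{lem6} follows by running the same modification through the proof of Lemma \ref{lem2}, which is exactly what you describe. The only remaining work is the bookkeeping you already flag (matching the difference variable $\ell-\ell'$, reduced modulo $q$, with the hypotheses of Proposition \ref{Prop7}).
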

We only prove Lemma \ref{lem5}. The proof of Lemma \ref{lem6} is similar.

\begin{proof}[Proof of Lemma \textnormal{\ref{lem5}}]
Following the proof of Theorem 23.5 in \cite{K2019}, it suffices to bound the sum
\[\sum_{k \leqslant y} \min \bigg \{\frac{X}{k},\frac{1}{\norm{k\alpha}} \bigg \}.
\]
To do so, we divide the interval $[1,y]$ into subintervals of length $q$ and write
$$
S_m:=\sum_{m q<k \leqslant(m+1) q} \min \bigg \{\frac{X}{k},\frac{1}{\norm{k\alpha}} \bigg \}.
$$
Note that 
\[\sum_{k \leqslant y} \min \bigg \{\frac{X}{k},\frac{1}{\norm{k\alpha}} \bigg \} \leqslant\sum_{m=0}^{\lfloor y/q \rfloor} S_m.
\]
We write $\alpha=a/q+\beta$ with $\lvert \beta \rvert  \leqslant \gamma/q^2$ and consider two cases.\\

\noindent \textbf{Case 1: $m \geqslant 1$.} Writing $k=mq+r$ where $1 \leqslant r \leqslant q$, we have $
\norm{k\alpha} = \norm{r\alpha+mq\beta}$. Define the intervals 
\[I _j = \bigg [\frac{j-1}{2q}, \frac{j}{2q}\bigg ), \quad j \in \{1,2,\dots, \lceil q \rceil \}.
\]
Fix some $k_1=mq+r_1$, such that $ \norm{k_1\alpha} \in I _j$ for some $j$. Since $I _j$ is an interval of length $1/2q$, it follows that $\norm{r_1\alpha}$ is contained in the union of at most two intervals, say $T_1$ and $T_2$, each of length at most $1/2q$. Suppose for some  $k_2=mq+r_2$, $ \norm{k_2\alpha} \in I _j$ as well. Then $\norm{r_2\alpha}$ is also contained in $T_1 \cup T_2$. But using Proposition \ref{Prop7}, each of $T_1$ or $T_2$ can contain at most $14 \gamma$ choices of $r_2$. Hence, there can be at most $28 \gamma$ different $k_2$ such that both $\norm{k_1 \alpha}$ and $\norm{k_2 \alpha}$ lies in $I _\ell$. Therefore, the contribution from those $k$'s for which $\norm{k \alpha} \in I _j$ to the sum $S_m$ is
\[\leqslant\frac{60 \gamma q}{j-1},
\]
when $j >1$. For the specific case when $j=1$, we trivially bound the contribution by $30\gamma/mq$. Therefore, for $m \geqslant 1$, we obtain

$$
S_m \leqslant 30 \gamma \bigg (\frac{ X}{m q}+\sum_{2 \leqslant j \leqslant \lceil q \rceil } \frac{2 q}{j-1} \bigg) \ll \gamma \left (\frac{ X}{m q}+q \log q \right ) .
$$

\noindent \textbf{Case 2: $m=0$.} For all $r \geqslant q/2\gamma$, an argument similar to the previous case shows that the contribution from such terms is $\ll \gamma (X/q+q \log q)$. Finally, we are left with the cases when $r<q/2\gamma$. Then $\lvert r \beta \rvert \leqslant1/2q$, which implies $\norm{r \alpha} \geqslant 1/2q$. Therefore, the total contribution from the cases when $r < q/2\gamma$ is 
\[
\ll \gamma(2q+q \log q) \ll \gamma q \log q.
\]
The rest of the proof follows along the lines of the proof of \cite[Theorem 23.5]{K2019}, and noting that there are $\ll y/q$ choices of $m$.  
\end{proof}
\subsection{Exponential sums and the divisor function}
We record the following result achieved by Mikawa that provide bounds on exponential sums involving the three-fold divisor function.
\begin{lemma}\label{lem: Mik}
Let $\alpha\in \mathbb{R}$ and consider a reduced fraction $a/q$ such that $|\alpha-a/q|\leqslant1/q^2$. Then, for $0<M,$ $J\leqslant X $, we have
\[\sum_{m\sim M}\sum_{j\sim J}\tau_{3}(j)\min\bigg(\frac{X}{m^{2}j},\frac{1}{\Vert\alpha m^{2}j\Vert}\bigg) \ll MJ(\log X)^{3}+\dfrac{X^{3/4}}{M}\bigg(Xq^{-1}+XM^{-1}+q\bigg)^{1/4}(\log X)^{8},\]
where $\tau_3$ denotes the three-fold divisor function.
\end{lemma}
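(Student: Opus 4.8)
The goal is to bound the sum
\[
\mathcal{S} := \sum_{m\sim M}\sum_{j\sim J}\tau_{3}(j)\min\bigg(\frac{X}{m^{2}j},\frac{1}{\Vert\alpha m^{2}j\Vert}\bigg),
\]
and the claimed bound has two pieces: a trivial-looking term $MJ(\log X)^3$ and a nontrivial term coming from the interplay between the cube of the divisor function and the Diophantine properties of $\alpha m^2 j$. This is a classical ``minor arc'' divisor sum of the type appearing in the circle-method treatment of the divisor function problem, and the natural approach is a combination of large-sieve / spacing arguments for the points $\alpha m^2 j$ together with the standard estimate $\sum_{j\le Y}\tau_3(j)\ll Y(\log Y)^2$.

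\textbf{Plan.} First I would dispose of the ``diagonal'' contribution: for each pair $(m,j)$ for which $\Vert\alpha m^2 j\Vert$ is small — say $\Vert \alpha m^2 j\Vert \le (m^2 j)/X$, equivalently where the minimum is realized by $X/(m^2 j)$ — one gets a contribution of size $X/(m^2 j)$, and summing $\tau_3(j) X/(m^2 j)$ over $m\sim M$, $j\sim J$ gives $\ll (X/M) (\log X)^3$, which is absorbed into the second term of the claimed bound once one checks $X/M \ll X^{3/4}M^{-1}(Xq^{-1}+XM^{-1}+q)^{1/4}$; indeed the bracket is always $\gg X^{1/2}$ (since either $Xq^{-1}\ge X^{1/2}$ or $q\ge X^{1/2}$), so the second term dominates $X^{3/4}M^{-1}\cdot X^{1/8}=X^{7/8}M^{-1}\gg X M^{-1}$. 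For the remaining pairs, where $\Vert\alpha m^2 j\Vert > m^2 j /X$, I would dyadically decompose according to the size of $\Vert\alpha m^2 j\Vert$: write $\Vert \alpha m^2 j\Vert \asymp 2^{-t}$ for $t$ ranging over $O(\log X)$ values, so that the contribution of each dyadic block is $\ll 2^t \sum \tau_3(j)$ over the relevant $(m,j)$. Here one sums $\tau_3(j)$ first over $j$ (trivially $\ll J(\log X)^2$ per residue class, or more carefully using the divisor bound) and is left to count how many $(m,j)$ with $m\sim M$, $j\sim J$ satisfy $\Vert \alpha m^2 j\Vert < 2^{-t+1}$.

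\textbf{The counting step.} This is the heart of the matter. The integers $n=m^2 j$ with $m\sim M$, $j\sim J$ lie in $[M^2 J, 4M^2 J]$, and I want to count those with $\Vert \alpha n\Vert$ small, weighted by the number of representations $n = m^2 j$, which is $O(\tau_3(j) \cdot (\text{number of square divisors}))$, hence on average $O(X^\varepsilon)$ but we must be more careful to avoid $\varepsilon$'s since the target bound has clean logarithmic powers. The standard tool: for a reduced fraction $a/q$ with $|\alpha - a/q|\le 1/q^2$, the number of integers $n$ in an interval of length $L$ with $\Vert \alpha n\Vert < \delta$ is $\ll (1 + L/q)(1 + \delta q)$ (a classical three-distance / large-sieve estimate). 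Applying this with $L = M^2 J$, $\delta = 2^{-t}$, and then redistributing the weight $\tau_3(j)$ by splitting $m$ into dyadic-in-$q$ ranges and applying Cauchy–Schwarz or the hyperbola method, one should obtain, after summing over $t$, a bound of the shape $MJ(\log X)^3 + X^{3/4}M^{-1}(Xq^{-1}+XM^{-1}+q)^{1/4}(\log X)^8$. The exponent $3/4$ on $X$ and the $1/4$ on the bracket strongly suggest a fourth-moment / Weyl-differencing step applied to the quadratic $m^2$ (squaring twice, or using the standard bound for $\sum_{m\sim M}\min(X/(m^2 j), \Vert\alpha m^2 j\Vert^{-1})$ via Weyl's inequality for the square), combined with Cauchy–Schwarz to pull the $\tau_3(j)$ out; the $(\log X)^8$ presumably accounts for $(\log X)^2$ from $\sum\tau_3$, plus the logarithmic losses in the dyadic decomposition and in the Cauchy–Schwarz applied to $\|\tau_3\|_2^2 \ll Y(\log Y)^{8}$ type bounds (note $\sum_{j\le Y}\tau_3(j)^2 \ll Y(\log Y)^8$).

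\textbf{Main obstacle.} The delicate point is handling the weight $\tau_3(j)$ simultaneously with the quadratic modulus $m^2$: one cannot simply pull $\tau_3$ outside as an $L^\infty$ bound (that would cost $X^\varepsilon$ and destroy the clean log-power), so one needs to interleave the divisor bound $\sum_{j\le Y}\tau_3(j)^2 \ll Y(\log Y)^8$ with the spacing estimate for $\{\alpha m^2 j\}$ at exactly the right stage — most likely fixing $m$ (or a short range of $m$), summing the resulting one-dimensional sum $\sum_{j\sim J}\tau_3(j)\min(X/(m^2j), \Vert \alpha m^2 j\Vert^{-1})$ by a Type-I-style argument (this is where Lemma \ref{lem1} or a direct divisor-sum version enters, since $\tau_3 = \mathbf{1}*\mathbf{1}*\mathbf{1}$ is a convolution with $\log^0$-type factors), then summing over $m$ using that $m^2$ runs over a well-spaced set and that $\alpha$ has a good rational approximation. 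Getting the exponents $a_k$-free, i.e. exactly matching $X^{3/4}M^{-1}(Xq^{-1}+XM^{-1}+q)^{1/4}$, will require care in how the three ranges $Xq^{-1}$, $XM^{-1}$, $q$ in the bracket arise — respectively from the ``$X/q$'' term of a Type I bound, from the length of the $m$-sum, and from the modulus $q$ itself — and in checking that no dyadic block produces a larger contribution. Since this lemma is attributed to Mikawa, I would expect the cleanest route is to cite and adapt his argument directly rather than rederive it from scratch; the proof in the paper almost certainly does exactly that.
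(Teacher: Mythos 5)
The paper does not rederive this bound at all: its ``proof'' is the single line ``See \cite{M2000}'', i.e.\ the estimate is quoted directly from Mikawa's paper, exactly as you anticipate in your closing sentence. So on the question of what the paper does, you are right.

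Judged as a proof attempt, however, your sketch has genuine gaps. First, the disposal of the ``diagonal'' is incorrect as written: you claim the contribution $\ll (X/M)(\log X)^{3}$ is absorbed because the second term is $\gg X^{3/4}M^{-1}\cdot X^{1/8}=X^{7/8}M^{-1}\gg XM^{-1}$, but $X^{7/8}M^{-1}$ does \emph{not} dominate $XM^{-1}$; and indeed for ranges such as $M=J=X^{1/4}$, $q\asymp X^{1/2}$ the trivial bound $\sum\tau_3(j)X/(m^2j)\asymp (X/M)(\log X)^{2}=X^{3/4}(\log X)^2$ exceeds the claimed bound (which is $\ll X^{11/16+o(1)}$ there). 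So the pairs on which the minimum equals $X/(m^2j)$ cannot be handled trivially — bounding their number genuinely requires the rational approximation to $\alpha$, i.e.\ the same spacing input as the off-diagonal part. Second, the heart of the argument is left unproved: the counting step is described but not executed, the source of the exponents $X^{3/4}(\,Xq^{-1}+XM^{-1}+q\,)^{1/4}$ is only conjectured (``strongly suggest a fourth-moment / Weyl-differencing step''), and the interleaving of the weight $\tau_3(j)$ (via $\sum_{j\le Y}\tau_3(j)^2\ll Y(\log Y)^{8}$ or a Type-I decomposition) with the spacing estimate — which you yourself identify as the main obstacle — is never resolved. As it stands the proposal is a plausible roadmap plus a correct guess that one should cite Mikawa, but not a proof of the stated inequality.
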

\begin{proof}
See \cite{M2000}.
\end{proof}
\subsection{The M\"{o}bius function in arithmetic progressions}
We collect below a result on the M\"{o}bius function in arithmetic progressions, due to Davenport, which is an application of the Siegel-Walfisz theorem with the standard zero-free region for Dirichlet $L$-functions.
\begin{lemma}\label{Davenport}
Let $X\geqslant 2$ and $B>0$. Let $r, q\geqslant 1$ be two integers with $q\leqslant(\log X)^B$. Then there exists a constant $c_B>0$ such that
\begin{align*}
    \sum_{\substack{n \leqslant X \\ n \equiv r \modu q}} \mu(n) \ll_B X\exp(-c_B\sqrt{\log X}),
\end{align*}
where $c_B>0$ is a constant depending only on $B$.
\end{lemma}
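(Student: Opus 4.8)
The plan follows the classical route: detect the congruence with Dirichlet characters and shift a contour integral of $1/L(s,\chi)$ into the de la Vall\'ee--Poussin zero-free region, using Siegel's theorem to control the possible exceptional zero. A short elementary argument first reduces matters to the case $\gcd(r,q)=1$. Put $d=\gcd(r,q)$: if $d$ is not squarefree, every $n\equiv r\imod{q}$ is divisible by a prime square, so $\mu(n)=0$ and the sum is empty. If $d$ is squarefree, then $d\mid n$ for every such $n$; writing $n=d\nu$, using $\mu(d\nu)=\mu(d)\mu(\nu)\ID_{\gcd(\nu,d)=1}$ and $n\equiv r\imod{q}\iff\nu\equiv r/d\imod{q/d}$ with $\gcd(r/d,q/d)=1$, one checks that the residual condition $\gcd(\nu,d)=1$ is automatic except at the primes $p\mid d$ with $p\,\|\,q$; hence, writing $d^{*}=\prod_{p\mid d,\,p\|q}p$ (which is coprime to $q/d$), the set of admissible $\nu$ is, by the Chinese remainder theorem, a union of $\phi(d^{*})\leqslant q$ residue classes $c$ modulo $M:=(q/d)d^{*}\mid q$, each with $\gcd(c,M)=1$. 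Since $X/d\geqslant X^{1/2}$ and $M\leqslant q\leqslant(\log X)^{B}$, it therefore suffices to prove, for $\gcd(c,M)=1$, $M\leqslant(\log X)^{B}$ and $Y\geqslant X^{1/2}$, the bound $\sum_{\nu\leqslant Y,\ \nu\equiv c\imod{M}}\mu(\nu)\ll_{B}Y\exp(-c_{B}\sqrt{\log Y})$; the extra factor $q$ coming from the reduction is absorbed by shrinking $c_{B}$. By orthogonality of characters this sum equals $\frac{1}{\phi(M)}\sum_{\chi\bmod M}\bar\chi(c)M(Y,\chi)$ with $M(Y,\chi):=\sum_{n\leqslant Y}\mu(n)\chi(n)$, so one is left with bounding each $M(Y,\chi)$ by $\ll_{B}Y\exp(-c_{B}\sqrt{\log Y})$, uniformly for $M\leqslant(\log X)^{B}$.

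For $M(Y,\chi)$ I would apply Perron's formula to $1/L(s,\chi)=\sum_{n}\mu(n)\chi(n)n^{-s}$ and shift the contour from $\real s=1+1/\log Y$ to the vertical line $\real s=1-c_{1}/\log(MT)$, truncated at height $T:=\exp(\sqrt{\log Y})$. On this line $1/L(s,\chi)$ is holomorphic and $\ll\log(M(|t|+2))$, by the classical de la Vall\'ee--Poussin zero-free region and the attendant bound for $L(s,\chi)$ there, \emph{except} for the possible Siegel zero $\beta_{1}$ of a single real character $\chi_{1}\bmod M$, whose residue contributes a term $\ll Y^{\beta_{1}}/(\beta_{1}|L'(\beta_{1},\chi_{1})|)$. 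Since $\log M\leqslant B\log\log X\ll\sqrt{\log Y}$ (using $Y\geqslant X^{1/2}$), we have $\log(MT)\asymp\sqrt{\log Y}$, and so the shifted vertical line together with the two horizontal segments contribute $\ll Y\exp(-c\sqrt{\log Y})$ by the usual estimates; the term $Y/T$ from truncation is of the same size.

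The main obstacle is the Siegel zero. By Siegel's theorem, for every $\eta>0$ there is a constant $C(\eta)>0$ with $\beta_{1}<1-C(\eta)M^{-\eta}$, so $Y^{\beta_{1}-1}\leqslant\exp\!\big(-C(\eta)M^{-\eta}\log Y\big)$; taking $\eta=1/(2B)$, so that $M^{-\eta}\geqslant(\log X)^{-1/2}$, and using $\log Y\geqslant\tfrac12\log X$, we get $Y^{\beta_{1}-1}\leqslant\exp(-\tfrac12 C(\eta)\sqrt{\log X})\ll_{B}\exp(-c_{B}\sqrt{\log Y})$, exactly the bound required --- and this is precisely the point at which $c_{B}$ (and the implied constant) becomes ineffective and acquires its dependence on $B$. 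Assembling the two contributions yields the estimate for $M(Y,\chi)$, and the reduction above then gives the lemma. An alternative that avoids re-deriving the zero-free region estimate is to cite the Siegel--Walfisz theorem for $\Lambda$ as a black box: starting from $\mu(n)\log n=-(\mu*\Lambda)(n)$, apply Siegel--Walfisz in progressions to a Dirichlet-hyperbola decomposition of $\sum_{n\leqslant x,\ n\equiv r\imod{q}}(\mu*\Lambda)(n)$ --- the range in which the M\"obius variable is small being handled by $\sum_{m\leqslant z}\mu(m)/m\ll\exp(-c\sqrt{\log z})$, the case $q=1$ --- and recover the original sum by partial summation; I would mention this but carry out the character argument as the main line.
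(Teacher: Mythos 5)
The paper itself does not prove this statement: it is quoted directly from Davenport \cite[Lemma 6]{D1937}, i.e.\ it is the Siegel--Walfisz theorem for the M\"obius function, and the paper's ``proof'' is that citation. So your write-up is a reconstruction of the cited black box rather than a different route, and most of it is sound: the elementary reduction from a general residue $r$ to reduced classes modulo a divisor $M$ of $q$ is correct (and genuinely needed, since the paper later applies the lemma to residues that are not coprime to the modulus), the character decomposition is standard, and the Siegel-theorem computation $Y^{\beta_1-1}\leqslant\exp(-c_B\sqrt{\log Y})$ for $M\leqslant(\log X)^B$ is exactly the classical mechanism.

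There is, however, one genuine gap in your main line: the exceptional zero is not actually disposed of. You correctly note that crossing $\beta_1$ produces a residue of size $Y^{\beta_1}/(\beta_1|L'(\beta_1,\chi_1)|)$, but you then bound only $Y^{\beta_1-1}$ and never control the factor $1/|L'(\beta_1,\chi_1)|$, for which there is no trivial lower bound; relatedly, the asserted estimate $1/L(s,\chi)\ll\log(M(|t|+2))$ on the shifted line breaks down near the real axis when $\beta_1$ lies close to (or to the right of) the contour, which is precisely the exceptional situation. These points are classical and repairable: one can keep the contour to the right of $\beta_1$, using Siegel's bound $1-\beta_1\gg_{\epsilon}M^{-\epsilon}$ to choose the abscissa and still extract the saving $\exp(-c\sqrt{\log Y})$; or one can lower-bound $L'(\beta_1,\chi_1)\geqslant L(1,\chi_1)/(1-\beta_1)-O\big((1-\beta_1)\log^{3}M\big)$ from $L(1,\chi_1)=\int_{\beta_1}^{1}L'(t)\,dt$ together with $L''\ll\log^{3}M$, which suffices in the regime where the zero is actually crossed; or, most simply, run the alternative you mention only in passing --- deduce the M\"obius estimate from Siegel--Walfisz for $\Lambda$ via $\mu\log=-(\mu*\Lambda)$ --- which avoids $1/L'$ altogether and is the standard modern presentation. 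As written, though, the exceptional-zero residue is not actually bounded, so the proof of the key estimate for $M(Y,\chi)$ is incomplete; alternatively, simply citing \cite{D1937} as the paper does closes the matter.
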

\begin{proof}
See \cite[Lemma 6]{D1937}.
\end{proof}
\subsection{A divisor sum estimate} Let $k \in \mathbb{N}$ and define
\[d_k(n) := (1 * 1 *\cdots * 1)(n),\]
where the Dirichlet convolution of the identity function $1$ is performed $k$ times. The Dirichlet series for $d_k(n)$ is given by $\zeta(s)^k$. Norton established the following result.
\begin{lemma}\label{Divisor Sum Powers}
For $X \geqslant 1$, and $k, r \geqslant 2$, we have
\begin{align*}
    \sum_{n \leqslant X } (d_k(n))^r \ll X(\log X)^{k^r-1}.
\end{align*}
\begin{proof}
See \cite[Eq. 1.10]{N1990}.
\end{proof}
\end{lemma}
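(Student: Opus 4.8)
The statement to be proved is Norton's divisor sum estimate, Lemma \ref{Divisor Sum Powers}: for $X \geqslant 1$ and integers $k, r \geqslant 2$, one has $\sum_{n \leqslant X}(d_k(n))^r \ll X(\log X)^{k^r - 1}$. Since the paper cites \cite[Eq.~1.10]{N1990} for this, I will sketch a self-contained proof along the standard lines used to bound moments of the divisor function. The plan is to work with the Dirichlet series $D(s) := \sum_{n=1}^\infty (d_k(n))^r n^{-s}$, identify its behavior near $s = 1$, and then extract the main-order asymptotic for the partial sum by a Tauberian or contour-shift argument.

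First I would observe that $n \mapsto (d_k(n))^r$ is multiplicative, so $D(s)$ has an Euler product
\begin{align*}
D(s) = \prod_p \left( \sum_{j=0}^\infty \frac{(d_k(p^j))^r}{p^{js}} \right) = \prod_p \left( 1 + \frac{k^r}{p^s} + O\!\left(\frac{1}{p^{2s}}\right) \right),
\end{align*}
using $d_k(p) = k$ and the fact that $d_k(p^j) = \binom{j+k-1}{k-1} \ll_{j,k} 1$ contributes only to the tail. Comparing with $\zeta(s)^{k^r} = \prod_p (1 - p^{-s})^{-k^r} = \prod_p (1 + k^r p^{-s} + O(p^{-2s}))$, I would factor $D(s) = \zeta(s)^{k^r} G(s)$, where $G(s) = \prod_p (1 + O(p^{-2s}))$ converges absolutely and is bounded and bounded away from $0$ in a half-plane $\operatorname{Re}(s) > 1/2 + \delta$. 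Thus $D(s)$ extends to a function that is analytic in $\operatorname{Re}(s) > 1/2$ except for a pole of order exactly $k^r$ at $s = 1$.

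Next I would pass from the Dirichlet series to the partial sum. The cleanest route that avoids zero-free regions for $\zeta$ (which are not needed here, since we only want a bound with the correct power of $\log$, not an asymptotic) is to use a Perron-type formula or, more elementarily, the classical hyperbola/induction method: one writes $(d_k)^r$ as a suitable Dirichlet convolution and applies induction on $k^r$, or invokes the standard lemma that if an arithmetic function $h \geqslant 0$ satisfies $\sum_{n \leqslant X} h(n) \ll X(\log X)^{A}$ then its Dirichlet convolution against a bounded-moment multiplicative piece raises the power of $\log X$ in a controlled way. Concretely, the Shiu / Wirsing-type bound for nonnegative multiplicative functions gives $\sum_{n \leqslant X} f(n) \ll \frac{X}{\log X} \exp\left( \sum_{p \leqslant X} \frac{f(p)}{p} \right)$ under mild hypotheses; here $\sum_{p \leqslant X} f(p)/p = k^r \sum_{p \leqslant X} 1/p = k^r \log\log X + O(1)$, which yields exactly $\sum_{n \leqslant X}(d_k(n))^r \ll X (\log X)^{k^r - 1}$. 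I would verify the Shiu hypotheses: $(d_k(n))^r \leqslant d_k(n)^r \ll_\varepsilon n^\varepsilon$ (polynomial-size growth) and $(d_k(p^j))^r \leqslant C_1 C_2^j$ for suitable constants, both of which are immediate from $d_k(p^j) = \binom{j+k-1}{k-1}$.

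The main obstacle is not any single deep ingredient but rather bookkeeping: making sure the power of the logarithm comes out as exactly $k^r - 1$ and not merely $O((\log X)^{k^r-1+\varepsilon})$, which requires the sharp form of the multiplicative-function bound (Shiu's theorem, or an explicit induction on $r$ and $k$ keeping track of the convolution structure $d_{k}^r$ vs.\ $d_{k}^{r-1} * (\text{something})$). An alternative, fully elementary induction would proceed by writing $(d_k(n))^r = \sum_{d \mid n} c(d) (d_k(n/d))^{r-1}$ for a nonnegative coefficient function $c$ supported essentially on powerful numbers with $\sum_d c(d)/d < \infty$, and then $\sum_{n \leqslant X}(d_k(n))^r \leqslant \sum_d c(d) \sum_{m \leqslant X/d}(d_k(m))^{r-1} \ll \sum_d c(d) \frac{X}{d}(\log X)^{k^{r-1}-1}$; the subtlety is that the exponent $k^{r-1}$ is wrong, so one must instead set up the induction on the single integer parameter $N = k^r$ via a genuine convolution identity $(d_k)^r = d_N * E$ with $E$ small, which again reduces to the Euler-product factorization above. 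I expect the write-up to be short once the factorization $D(s) = \zeta(s)^{k^r} G(s)$ is in place and the Selberg–Delange / Shiu machinery (or the reference to \cite{N1990}) is invoked.
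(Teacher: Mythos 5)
Your proposal is correct in substance, but it necessarily diverges from the paper, whose entire ``proof'' is the citation to Norton \cite[Eq.~1.10]{N1990}; Norton establishes such bounds by elementary means with constants uniform and explicit in the parameters, which is more than the paper needs (it only invokes the lemma with $r=2$ and $k=\ell$ fixed inside the induction). Your self-contained route is the standard one and it works: since $(d_k(n))^r$ is nonnegative and multiplicative with value $k^r$ at primes and with $(d_k(p^j))^r=\binom{j+k-1}{k-1}^r\leqslant C_1C_2^{\,j}$, either the Hall--Tenenbaum/Shiu bound $\sum_{n\leqslant X}f(n)\ll \frac{X}{\log X}\exp\big(\sum_{p\leqslant X}f(p)/p\big)$ gives exactly $X(\log X)^{k^r-1}$, or one uses the factorization $D(s)=\zeta(s)^{k^r}G(s)$ to write $(d_k)^r=d_{k^r}*E$ with $E(p)=0$ and $\sum_e|E(e)|/e<\infty$, and then $\sum_{n\leqslant X}(d_k(n))^r\leqslant\sum_e|E(e)|\sum_{m\leqslant X/e}d_{k^r}(m)\ll X(\log X)^{k^r-1}$ by the classical bound for $\sum_{m\leqslant Y}d_N(m)$. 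You correctly diagnose and repair the pitfall in the naive induction (which would produce the wrong exponent $k^{r-1}$ rather than $k^r$): the induction must be run on $N=k^r$ through the convolution identity, not on $r$ directly. Two cosmetic points: the claim that $G$ is ``bounded away from $0$'' is unnecessary for an upper bound (and is the only place where your sketch asserts more than it needs), and Shiu's theorem in its short-interval/progression form is overkill — the long-sum version for nonnegative multiplicative functions suffices. What your route buys is self-containedness with an implied constant depending on $k,r$, which is all the paper uses; what the paper's citation buys is brevity and Norton's sharper, uniform statement.
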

\subsection{Bounding $1/\zeta(s)$ in the critical strip.}
In order to prove the explicit formula in Section \ref{sec:principalarc}, we will require a contour integration involving the term $1/\zeta^2(s)$. To prove our results unconditionally, we shall use the following result due to works of Ramachandra and Sankaranarayanan, and Inoue, which provides unconditional upper bounds for $1/\zeta(s)$ inside the critical strip along a specific vertical line segment.
\begin{lemma} \label{lem:RamachandraSankaranarayanan}
Let $T>0$ be a sufficiently large positive number and $H=T^{1/3}$. Then one has
\begin{align*}
        \min_{T \leqslant t \leqslant T+H} \max_{\frac{1}{2} \leqslant \sigma \leqslant 2} |\zeta(\sigma+it)|^{-1} \leqslant \exp( C (\log \log T)^2)
\end{align*}
with an absolute constant $C>0$. In particular, there exists a real $T_* \in [T,T+T^{1/3}]$ such that
\begin{align*}
        \frac{1}{\zeta(\sigma+iT_*)} \leqslant T_*^\varepsilon \quad \textnormal{where} \quad \frac{1}{2} \leqslant \sigma \leqslant 2,
\end{align*}
for any $\varepsilon>0$.
\end{lemma}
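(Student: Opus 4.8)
The statement to be proved is Lemma~\ref{lem:RamachandraSankaranarayanan}, a uniform unconditional lower bound for $|\zeta(\sigma+it)|$ on a short vertical segment $[T,T+T^{1/3}]$, valid uniformly for $1/2 \le \sigma \le 2$. Since this result is explicitly attributed in the excerpt to Ramachandra--Sankaranarayanan and Inoue, the honest plan is to extract it from the literature rather than reprove it from scratch; nonetheless, here is the shape of the argument I would give.

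First, I would reduce to a statement about the integral of $\log|\zeta|^{-1}$. The idea is that if $|\zeta(\sigma+it)|^{-1}$ were large \emph{throughout} every subinterval of length $H = T^{1/3}$ inside $[T, T+H]$ — more precisely if $\max_{1/2 \le \sigma \le 2}|\zeta(\sigma+it)|^{-1} > \exp(C(\log\log T)^2)$ for \emph{all} $t$ in such a segment — then one would get a lower bound for $\int_{T}^{T+H} \log \frac{1}{|\zeta(\sigma_0(t)+it)|}\,dt$ that contradicts a known upper bound for such a mean value. So the core input is a bound of the form
\begin{align*}
\int_{T}^{2T} \max_{1/2 \le \sigma \le 2} \log^+\frac{1}{|\zeta(\sigma+it)|}\,dt \ll T (\log\log T)^2,
\end{align*}
or a localized version of it on intervals of length $T^{1/3}$. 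This is precisely the type of estimate established by Ramachandra--Sankaranarayanan on the frequency of small values of $\zeta$ on vertical lines; it rests on the approximate functional equation, convexity (Borel--Carathéodory / Hadamard three-circles applied to $\log \zeta$), and a moment estimate for $1/\zeta(s)$ near the line $\Re(s)=1$ combined with the classical zero-free region.

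The main steps in order: (i) invoke the known mean-value / large-values estimate controlling how often $|\zeta(\sigma+it)|$ can be small on the segment $\sigma \in [1/2,2]$, phrased so that the exceptional set of $t \in [T, 2T]$ on which $\min_\sigma |\zeta(\sigma+it)|$ is smaller than $\exp(-C(\log\log T)^2)$ has measure $o(T^{1/3})$ — actually one needs it to have measure less than $H = T^{1/3}$ on \emph{every} interval of that length, which is where a genuinely localized version is needed; (ii) conclude that each interval $[T,T+T^{1/3}]$ contains at least one admissible $t = T_*$; (iii) at such a $T_*$, use the resulting lower bound $|\zeta(\sigma+iT_*)| \ge \exp(-C(\log\log T)^2)$ and absorb $\exp(C(\log\log T)^2) \ll T_*^\varepsilon$ for any fixed $\varepsilon > 0$ to get the clean form stated. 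Steps (ii) and (iii) are routine; the substance is entirely in (i).

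The main obstacle is obtaining the \emph{localized} large-values estimate: a bound on $\int_T^{2T}$ immediately gives only that \emph{most} intervals of length $T^{1/3}$ contain a good point, not every one. Upgrading to "every interval of length $T^{1/3}$" requires the sharper analysis of Ramachandra--Sankaranarayanan (and its refinement by Inoue), which exploits the near-periodicity coming from the Dirichlet polynomial approximation to $1/\zeta$ and an application of a suitable continuity / Carathéodory estimate on discs of radius $\asymp T^{1/3}$ to pass from a mean bound to a pointwise-on-a-segment bound. I would therefore state the needed input as a black box citing \cite{Titchmarsh} for the underlying convexity machinery and the Ramachandra--Sankaranarayanan and Inoue papers for the localization, and then carry out only the short deduction of the two displayed inequalities in the lemma from that input.
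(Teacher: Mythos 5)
Your proposal is correct and matches the paper's treatment: the paper gives no independent proof of this lemma, citing \cite[Theorem 2]{RamachandraSankaranarayanan} and \cite[Lemma 1]{Inoue} exactly as you propose to do, and the only genuine deduction needed --- passing from the $\exp(C(\log\log T)^2)$ bound to $T_*^{\varepsilon}$ for $T_* \in [T, T+T^{1/3}]$ --- is the routine step you carry out in (iii). Your sketch of the internal mechanism of the Ramachandra--Sankaranarayanan argument is explicitly black-boxed, so nothing in it affects the correctness of your route.
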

\begin{proof}
See \cite[Theorem 2]{RamachandraSankaranarayanan} and \cite[Lemma 1]{Inoue}.
\end{proof}
\section{Proofs of Theorems \ref{theorem 1}, \ref{theorem 2} and \ref{theorem 3}} \label{sec:proofsexpsums}
In this section we present the proofs of Theorems \ref{theorem 1}, \ref{theorem 2} and \ref{theorem 3}.

\subsection{Proof of Theorem \ref{theorem 1}} We can assume $q \leqslant X$, otherwise, the conclusion follows trivially. Let $U, V \geqslant 2$ be parameters to be chosen later on. Applications of the Möbius inversion formula yield
\begin{align}\label{Mobius : Step 2}
 \mu(n) &= \mu_{\leqslant U}(n) + \mu_{\leqslant V}(n)+  (\mu_{>U}*\mu_{>V}*1)(n) -(\mu_{\leqslant U}*\mu_{\leqslant V}*1)(n) \notag \\
 & = \mu^{(1)} + \mu^{(2)}+\mu^{(3)}-\mu^{(4)},
\end{align}
say. Consider $\mu^{(1)}$ and $\mu^{(2)}$ first. Since our choice of parameters $U$ and $V$ will be small compared to $X$, bounding trivially, we arrive at
\begin{align}%\label{mu-1 sum}
\sum_{n\leqslant X }\mu^{(1)}(n)\e(n\alpha)&\ll U \label{mu-1 sum} \\
     \sum_{n\leqslant X }\mu^{(2)}(n)\e(n\alpha)& \ll V \label{mu-2 sum}.
\end{align}
For $\mu^{(3)}(n)$, we will use Lemma \ref{lem2}. However, applying the lemma directly doesn't suffice our purpose. We consider breaking our range into dyadic intervals. By definition, we have
$$
\mu^{(3)}(n)=\sum_{\substack{k \ell=n \\ k>U, \ell >V}} \mu_{>U}(k) (\mu_{>V} * 1)(\ell) .
$$
To obtain better control of the support of the variables $k$ and $\ell$, we break the interval $(U, X/ V]$ into dyadic intervals $\left(2^{j-1}, 2^j\right]$, where $2^j \in(U, 2 X / V]$. If $k \in\left(2^{j-1}, 2^j\right]$, then $\ell=n / k \leqslant X / 2^{j-1}$. With this more accurate decomposition, we have
$$
\mu^{(3)}(n)=\sum_{U<2^j \leqslant 2 X / V}\left(f_j * g_j\right)(n), \quad \text { for } n \leqslant X ,
$$
where $f_j(k)=\mu_{>U}(k) \cdot 1_{2^{j-1}<k \leqslant 2^j}$ and $g_j(\ell)=(\mu_{>V} * 1)(\ell) \cdot 1_{V<\ell \leqslant X/ 2^{j-1}}$. Then, applying Lemma \ref{lem2}, we obtain 
\begin{align}\label{mu-3 sum}
    \sum_{n\leqslant X }\mu^{(3)}(n)\e(n\alpha) &= \sum_{U<2^j \leqslant 2 X / V} \sum_{n\leqslant X }\left(f_j * g_j\right)(n)\e(n\alpha) \notag \\
    &\ll \sum_{U<2^j \leqslant 2 X / V} \bigg(q^{1/2} + \dfrac{X^{1/2}}{U^{1/2}} +\dfrac{X^{1/2}}{V^{1/2}} + \dfrac{X^{1/2}}{q^{1/2}}\bigg)\sqrt{\log 2q}\cdot \|f_j\|_2\|g_j\|_2 \notag \\
    &\ll \bigg(X^{1/2}q^{1/2} + \dfrac{X}{U^{1/2}} +\dfrac{X}{V^{1/2}}+ \dfrac{X}{q^{1/2}}\bigg)(\log X)^3.
\end{align}

\noindent \textbf{Note.} In the rest of this section, we will always apply the above more accurate decomposition technique before applying Lemma \ref{lem2}. \\

Finally, we focus on $\mu^{(4)}(n)$. We apply Lemma \ref{lem1} with $y=UV$, $v=0$ and $f = \mu_{\leqslant U}*\mu_{\leqslant V}$. Since $\|f\|_{\infty} \ll_{\varepsilon} (UV)^{\varepsilon} $ for any fixed $\varepsilon>0$, it follows that
\begin{align}\label{mu-4 sum}
\sum_{n\leqslant X }\mu^{(4)}(n)\e(n\alpha) \ll_{\varepsilon} (UV)^{\varepsilon}\bigg( UV+ \dfrac{X}{q} + q\bigg)(\log X).
\end{align}
We choose $U=V=\min\{ X^{2/5}, q, X/q \}$ and combine the estimates \eqref{mu-1 sum}, \eqref{mu-2 sum}, \eqref{mu-3 sum} and \eqref{mu-4 sum} to obtain
\begin{align*}
    \sum_{n\leqslant X }\mu(n)\e(n\alpha) \ll_{\varepsilon} X^{4/5+\varepsilon}+ \dfrac{X(\log X)^3}{q^{1/2}} + X^{1/2}q^{1/2}(\log X)^3.
\end{align*} 
This ends the proof. \qed
\subsection{Proof of Theorem \ref{theorem 2}}
The proof follows along the lines of the proof of Theorem \ref{theorem 1}. Again,  we can assume $q \leqslant X$. Let $U, V \geqslant 2$ be parameters to be chosen later on. Employing the Möbius inversion formula, we obtain
\begin{align*}
    \hat{\mu}(n)=\hat{\mu}_{ \leqslant U}(n) + (\hat{\mu}_{ > U}*\mu_{>V}*1)(n) + (f*\mu_{\leqslant V})(n) - (\hat{\mu}_{ \leqslant U}*\mu_{\leqslant V}*1)(n).
\end{align*}
Trivially, we have
\begin{align}\label{1st Estimate}
    \sum_{n\leqslant X }\hat{\mu}_{ \leqslant U}(n)\e(n\alpha) \ll  U \log U.
\end{align}
Next, by Lemma \ref{lem2}, similar to how we obtained \eqref{mu-3 sum}, we have
\begin{align}\label{2nd Estimate}
\sum_{n\leqslant X } (\hat{\mu}_{ > U}*\mu_{>V}*1)(n)\e(n\alpha) &\ll  X^{1/2}\bigg(\dfrac{X}{U}+\dfrac{X}{V}+q + \dfrac{X}{q} \bigg)^{1/2} (\log X)^{5/2}\notag \\
&\ll \bigg(X^{1/2}q^{1/2}+\dfrac{X}{U^{1/2}}+\dfrac{X}{V^{1/2}}  + \dfrac{X}{q^{1/2}}\bigg)(\log X)^{5/2}.
\end{align}
Observing that $\|f \cdot 1_{n\leqslant X} \|_2 \ll X^{1/4}$, again, an application of Lemma \ref{lem2} shows that
\begin{align}\label{3rd Estimate}
\sum_{n\leqslant X }(f*\mu_{\leqslant V})(n)\e(n\alpha) &\ll \bigg(q + X + V + \dfrac{X}{q}\bigg)^{1/2}X^{1/4}V^{1/4}(\log X)^{3/2}\notag \\
&\ll \bigg(X^{1/4}V^{1/4}q^{1/2}+X^{3/4}V^{1/4}\bigg)(\log X)^{3/2}.
\end{align}
Arguing as in the proof of \eqref{mu-4 sum} for the case of the M\"{o}bius function, we have
\begin{align}\label{4th Estimate}
\sum_{n\leqslant X }(\hat{\mu}_{ \leqslant U}*\mu_{\leqslant V}*1)(n)\e(n\alpha) \ll_{\varepsilon} (UV)^{\varepsilon}\bigg( UV+ \dfrac{X}{q} + q\bigg)(\log X),
\end{align}
for any fixed $\varepsilon>0$. Finally, we choose $U=V=\min\{ X^{1/3}, q, X/q \}$ and combine \eqref{1st Estimate}, \eqref{2nd Estimate}, \eqref{3rd Estimate} and \eqref{4th Estimate} to obtain
\begin{align*}
\sum_{n\leqslant X }\hat{\mu}(n)\e(n\alpha) \ll \bigg(X^{5/6} + q^{1/2}X^{1/2} + \dfrac{X}{q^{1/2}}\bigg)(\log X)^{5/2},
\end{align*}
which is the desired result. \qed

\subsection{Proof of Theorem \ref{theorem 3}}
Similar to the previous proofs, we can assume $q \leqslant X$. Let $U, V \geqslant 2$ be parameters to be chosen later on. We start with the following identity
\begin{align*}
\tilde{\mu}_2(n) &= \tilde{\mu}_2(n)\cdot 1_{n\leqslant U} + (\tilde{\mu}_2\cdot1_{n>U}*\mu_{>V}*1)(n) \notag \\
&\quad + (\mu^2*\mu_{\leqslant V})(n)-(\tilde{\mu}_2 \cdot 1_{n \leqslant U}*\mu_{\leqslant V}*1)(n).
\end{align*}
Arguing as in the previous proofs, we can show that
\begin{align}
\sum_{n\leqslant X }\tilde{\mu}_2(n)\cdot 1_{n \leqslant U} \e(n\alpha) &\ll U \log U, \label{Thm 3 : Step 2A}\\
\sum_{n\leqslant X }(\tilde{\mu}_2\cdot1_{n>U}*\mu_{ >V}*1)(n)\e(n\alpha) &\ll \bigg(X^{1/2}q^{1/2} + \dfrac{X}{U^{1/2}}+\dfrac{X}{V^{1/2}} +\dfrac{X}{q^{1/2}}\bigg) (\log X)^3,\label{Thm 3 : Step 2B} \\
\textrm{and} \quad \sum_{n\leqslant X }(\mu_{ \leqslant V}*\tilde{\mu}_2 \cdot 1_{n \leqslant U}*1)(n) e(n\alpha) &\ll_{\varepsilon} (UV)^{\varepsilon}\bigg( UV+ \dfrac{X}{q} + q\bigg)(\log X), \label{Thm 3 : Step 2C}
\end{align}
for any fixed $\epsilon>0$. It remains to estimate the sum
\begin{align*}
    \sum_{n\leqslant X }(\mu^2*\mu_{\leqslant V})(n)\e(n\alpha).
\end{align*}
A treatment like $\eqref{3rd Estimate}$ is not enough in this case because $\|\mu^2 \cdot 1_{n\leqslant X} \|_2 \ll X^{1/2}$. Therefore, we proceed in a different way. Using the relation $\mu^2(m)=\sum_{ab^2=n}\mu(b)$, we can write
\begin{align}
     \sum_{n\leqslant X }(\mu^2*\mu_{\leqslant V})(n)\e(n\alpha) &=\sum_{\substack{mb^2\leqslant X \\ m\leqslant V}}\mu(b)\mu(m)\sum_{a\leqslant X /(mb^2)}\e(amb^2\alpha) \notag \\
     &\ll \sum_{b\leqslant X ^{1/2}}\sum_{m\leqslant V}\min\bigg\{\dfrac{X}{b^2m}, \dfrac{1}{\|b^2m\alpha\|}\bigg\} \label{Thm 3 : Step 3}.
\end{align}
We need to break the range of $b$ in \eqref{Thm 3 : Step 3} into two parts: for large $b$'s, we will use the bound from Lemma \ref{lem: Mik}, and for small $b$'s, we will use the Cauchy-Schwarz inequality along with Lemma \ref{lem1}. Indeed, we write
\begin{align*}
     \sum_{b\leqslant X ^{1/2}}\sum_{m\leqslant V}\min\bigg\{\dfrac{X}{b^2m}, \dfrac{1}{\|b^2m\alpha\|}\bigg\} =  \sum_{b\leqslant B}\sum_{m\leqslant V}\min\bigg\{\dfrac{X}{b^2m}, \dfrac{1}{\|b^2m\alpha\|}\bigg\} +  \sum_{B<b\leqslant X ^{1/2}}\sum_{m\leqslant V}\min\bigg\{\dfrac{X}{b^2m}, \dfrac{1}{\|b^2m\alpha\|}\bigg\}.
\end{align*}
Then, by applying the Cauchy-Schwarz inequality,
\begin{align}
    \sum_{b\leqslant B}\sum_{m\leqslant V}\min\bigg\{\dfrac{X}{b^2n}, \dfrac{1}{\|b^2n\alpha\|}\bigg\} &\leqslant\sum_{m\leqslant B^2V}\tau_3(m)\min\bigg\{\dfrac{X}{m}, \dfrac{1}{\|m\alpha\|}\bigg\} \notag \\
    &\ll X^{1/2}\bigg(B^2V + q + \dfrac{X}{q}\bigg)^{1/2} (\log X)^6 \notag\\
    &\ll \bigg(X^{1/2}BV^{1/2} + X^{1/2}q^{1/2} + \dfrac{X}{q^{1/2}}\bigg)(\log X)^6. \label{Thm 3 : Step 4}
\end{align}
Next, by Lemma \ref{lem: Mik} dyadically, we have
\begin{align}
  \sum_{B<b\leqslant X ^{1/2}}\sum_{m\leqslant V}\min\bigg\{\dfrac{X}{b^2m}, \dfrac{1}{\|b^2m\alpha\|}\bigg\} \ll \bigg(X^{1/2}V + \dfrac{X}{q^{1/4}B} + \dfrac{X}{B^{5/4}} + \dfrac{X^{3/4}q^{1/4}}{B}\bigg)(\log X)^{10}. \label{Thm 3 : Step 5} 
\end{align}
Choose $B=(X/V)^{2/9}$ and combining \eqref{Thm 3 : Step 4} and \eqref{Thm 3 : Step 5}, we obtain
\begin{align}
   \sum_{b\leqslant X ^{1/2}}\sum_{m\leqslant V}\min\bigg\{\dfrac{X}{b^2m}, \dfrac{1}{\|b^2m\alpha\|}\bigg\}\ll \bigg(X^{13/18}V^{5/18} + X^{1/2}V + \dfrac{X}{q^{1/4}} + X^{3/4}q^{1/4}\bigg)(\log X)^{10}. \label{Thm 3 : Step 6}  
\end{align}
Finally, we choose $U=V=\min\{ X^{5/14}, q, X/q \}$ and combine \eqref{Thm 3 : Step 2A}, \eqref{Thm 3 : Step 2B}, \eqref{Thm 3 : Step 2C} and \eqref{Thm 3 : Step 6} to obtain
\begin{align*}
    \sum_{n\leqslant X }\tilde{\mu}_2(n)\e(n\alpha)\ll \bigg(X^{23/28} + \dfrac{X}{q^{1/4}} + X^{3/4}q^{1/4}\bigg)(\log X)^{10},
\end{align*}
as it was to be shown.
\qed

\section{Proof of Theorem \ref{Mobius-k convolutions}} \label{sec:secondexpsumproof}
In order to prove Theorem \ref{Mobius-k convolutions}, we will require the following lemma. 

\begin{lemma}\label{Gamma-Mobius}
Let $\alpha\in \mathbb{R}$ and consider a reduced fraction $a/q$ such that $|\alpha-a/q|\leqslant\gamma/q^2$ for some $\gamma \geqslant 1$. Then for $X \geqslant 2$ and for any fixed $\varepsilon>0$, we have
\begin{align}\label{Mu-Gamma}
S_{\mu}(X, \alpha) \ll_{\varepsilon} \gamma \bigg(X^{4/5+\varepsilon}+\dfrac{X(\log X)^3}{q^{1/2}} + X^{1/2}q^{1/2} (\log X)^3\bigg ),
\end{align}
where the implied constant in \eqref{Mu-Gamma} depends at most on $\varepsilon$.
\end{lemma}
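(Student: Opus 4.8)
The plan is to imitate the proof of Theorem \ref{theorem 1} verbatim, but replace every invocation of Lemma \ref{lem1} and Lemma \ref{lem2} by their $\gamma$-twisted counterparts, Lemma \ref{lem5} and Lemma \ref{lem6}. As before, we may assume $q \leqslant X$. First I would write down the same M\"obius inversion decomposition
\[
\mu(n) = \mu^{(1)}(n) + \mu^{(2)}(n) + \mu^{(3)}(n) - \mu^{(4)}(n),
\]
with $\mu^{(1)} = \mu_{\leqslant U}$, $\mu^{(2)} = \mu_{\leqslant V}$, $\mu^{(3)} = \mu_{>U}*\mu_{>V}*1$ and $\mu^{(4)} = \mu_{\leqslant U}*\mu_{\leqslant V}*1$, for parameters $U,V \geqslant 2$ to be chosen at the end.

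The two truncated pieces $\mu^{(1)}$ and $\mu^{(2)}$ are handled by the same trivial bounds as in \eqref{mu-1 sum}--\eqref{mu-2 sum}, giving contributions $\ll U$ and $\ll V$ respectively, independent of $\gamma$. For $\mu^{(3)}$, I would run the same dyadic decomposition $\mu^{(3)}(n) = \sum_{U < 2^j \leqslant 2X/V} (f_j * g_j)(n)$ with $f_j$ supported on $(2^{j-1}, 2^j]$ and $g_j$ supported on $(V, X/2^{j-1}]$, and apply Lemma \ref{lem6} in place of Lemma \ref{lem2}. This introduces an extra factor of $\gamma$ and otherwise reproduces \eqref{mu-3 sum}, yielding
\[
\sum_{n \leqslant X} \mu^{(3)}(n)\e(n\alpha) \ll \gamma \left( X^{1/2}q^{1/2} + \frac{X}{U^{1/2}} + \frac{X}{V^{1/2}} + \frac{X}{q^{1/2}} \right) (\log X)^3.
\]
For $\mu^{(4)}$, I would apply Lemma \ref{lem5} with $y = UV$, $v = 0$, $f = \mu_{\leqslant U}*\mu_{\leqslant V}$, using $\|f\|_\infty \ll_\varepsilon (UV)^\varepsilon$, to get $\ll_\varepsilon \gamma (UV)^\varepsilon (UV + X/q + q)(\log X)$. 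Finally, choosing $U = V = \min\{X^{2/5}, q, X/q\}$ exactly as in the proof of Theorem \ref{theorem 1} and summing the four contributions gives \eqref{Mu-Gamma}, with every term now carrying the single factor $\gamma$.

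There is essentially no obstacle here: the content of the lemma is already packaged inside Lemmas \ref{lem5} and \ref{lem6}, whose proofs absorb the loss of a factor $\gamma$ relative to the $\gamma = 1$ versions. The only point requiring a line of justification is that the $\log$-powers and the parameter optimization are unchanged — the $\gamma$ factors out cleanly and never interacts with the choice of $U$ and $V$ — so the final exponents $4/5$, $1/2$ and the $(\log X)^3$ savings are identical to those in Theorem \ref{theorem 1}. I would therefore present this proof as a short paragraph noting that it is "the proof of Theorem \ref{theorem 1} with Lemmas \ref{lem5} and \ref{lem6} in place of Lemmas \ref{lem1} and \ref{lem2}," and then displaying the final combined bound.
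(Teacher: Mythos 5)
Your proposal is correct and is exactly the paper's argument: the paper proves this lemma by noting it follows the proof of Theorem \ref{theorem 1} verbatim with Lemmas \ref{lem5} and \ref{lem6} substituted for Lemmas \ref{lem1} and \ref{lem2}, which is precisely what you do (you simply spell out the details). No differences worth noting.
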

\begin{proof}
    The proof follows exactly along the lines of the proof of Theorem \ref{theorem 1}, except that we apply Lemmas \ref{lem5} and \ref{lem6} instead of Lemmas \ref{lem1} and \ref{lem2} respectively.
\end{proof}
We explicitly show the proof of Theorem \ref{Mobius-k convolutions} in the case when $k=2$. The argument then follows similarly for higher values of $k$ using induction. We can assume $q \leqslant X$, otherwise, the conclusion is trivial. We break our proof into several steps.

\subsection{Initial step} Let 
\begin{align*}
    S_{\mu_2}(X, \alpha):=\sum_{n\leqslant X }(\mu*\mu)(n)\e(n\alpha).
\end{align*}
Then, for some parameters $2\leqslant M , N\leqslant X $ to be chosen later, we can write
\begin{align} 
S_{\mu_2}(X, \alpha)=&\: \sum_{\substack{mn\leqslant X \\ m> M, n>N}}\mu(m)\mu(n)\e(mn\alpha) + \sum_{m\leqslant M }\mu(m)\sum_{n\leqslant X /m}\mu(n)\e(mn\alpha)\notag \\
\notag &+\sum_{n\leqslant N}\mu(n)\sum_{m\leqslant X /n}\mu(m)\e(mn\alpha)-\sum_{\substack{m\leqslant M\\ n\leqslant N }}\mu(m)\mu(n)\e(mn\alpha)\\
=&\:  S_{\mu_2}^{(1)}(X, \alpha)+S_{\mu_2}^{(2)}(X, \alpha) + S_{\mu_2}^{(3)}(X, \alpha)-S_{\mu_2}^{(4)}(X, \alpha) \label{Dividing Sums} ,
\end{align}
say. We now turn our attention to each of these terms.
\subsection{Estimation of $S_{\mu_2}^{(1)}(X, \alpha)$ and $S_{\mu_2}^{(4)}(X, \alpha)$.} Applying Lemma \ref{lem2} after the dyadic decomposition following the proof of Theorem \ref{theorem 1}, we deduce that
\begin{align}\notag 
S_{\mu_2}^{(1)}(X, \alpha)&\ll X^{1/2}\bigg(q+\dfrac{X}{M} + \dfrac{X}{N} + \dfrac{X}{q}\bigg)^{1/2}(\log X)^2\\
&\ll \bigg(X^{1/2}q^{1/2}+\dfrac{X}{M^{1/2}} + \dfrac{X}{N^{1/2}} + \dfrac{X}{q^{1/2}}\bigg)(\log X)^2 \label{S_1 bound}.
\end{align}
Trivially, we have
\begin{align}
S_{\mu_2}^{(4)}(X, \alpha) \ll  MN. \label{S_4 bound}
\end{align}
\subsection{Estimation of $S_{\mu_2}^{(2)}(X, \alpha)$ and $S_{\mu_2}^{(3)}(X, \alpha)$.}  Fix $\varepsilon>0$. Writing $\alpha = a/q + \beta$, one has
\[m\alpha = \dfrac{ma}{q} + m\beta = \dfrac{ma/(m, q)}{q/(m, q)} + m\beta \quad \text{and}\quad |m\beta|\leqslant\dfrac{m/(m, q)^2}{q^2/(m, q)^2} \leqslant\dfrac{m/(m, q)}{q^2/(m, q)^2}.\]
Therefore, using Lemma \eqref{Gamma-Mobius}, we obtain
\begin{align}
    \notag  S_{\mu_2}^{(2)}(X, \alpha) & \ll_{\varepsilon} \sum_{m\leqslant M}\dfrac{m}{(m, q)}\bigg\{\bigg(\dfrac{X}{m}\bigg)^{4/5+\varepsilon} + \dfrac{X(m, q)^{1/2}(\log X)^3}{mq^{1/2}} + \bigg(\dfrac{X}{m}\bigg)^{1/2}\dfrac{q^{1/2}(\log X)^3}{(q, m)^{1/2}}\bigg \} \\
    &\ll_{\varepsilon} X^{4/5+\varepsilon}M^{6/5-\varepsilon} + \dfrac{XM(\log X)^3}{q^{1/2}} + X^{1/2}q^{1/2}M^{3/2}(\log X)^3. \label{S_2 Bound}
\end{align}
Similarly, we can show that
\begin{align}
    S_{\mu_2}^{(3)}(X, \alpha)\ll_{\varepsilon}  X^{4/5+\varepsilon}N^{6/5-\varepsilon} + \dfrac{XN(\log X)^3}{q^{1/2}} + X^{1/2}q^{1/2}N^{3/2}(\log X)^3. \label{S_3 bound}
\end{align}
\subsection{Parameter optimization}
Substituting the estimates \eqref{S_1 bound}, \eqref{S_2 Bound}, \eqref{S_3 bound} and \eqref{S_4 bound} in \eqref{Dividing Sums} and choosing $M=N$, we deduce that
\begin{equation}
    S_{\mu_2}(X, \alpha) \ll_{\varepsilon}  X^{4/5+\varepsilon}M^{6/5-\varepsilon}+\bigg(\dfrac{X}{M^{1/2}} + X^{1/2}q^{1/2} + \dfrac{XM}{q^{1/2}} + X^{1/2}q^{1/2}M^{3/2}\bigg)(\log X)^3. \notag
\end{equation}
We are now left with the crucial step,  to choose our parameter $M$ optimally. Let $M=y^5$, $A= \sqrt{q}$, $B = X^{1/5}$ and $C  = \sqrt{{X}/{q}}$.  Then it suffices to find an upper bound for the expression
\begin{align}\label{Optimization Step}
    H(y):= \max \bigg \{ \dfrac{1}{y^{5/2}}, \dfrac{y^5}{A}, \dfrac{y^6}{B}, \dfrac{y^{15/2}}{C}  \bigg \}. 
\end{align} 
Note that as a function of $y$, only $1/y^{5/2}$ is decreasing. Hence, the maximum in \eqref{Optimization Step} is attained when $1/y^3$ is equal to one of the other three functions. Consider the case when 
\begin{align} \label{Case 1 Condition}
   \dfrac{y^5}{A} \geqslant \max \bigg \{ \dfrac{y^6}{B}, \dfrac{y^{15/2}}{C}  \bigg \},
\end{align} 
which occurs when $1 \leqslant q \leqslant X^{6/17}$. Then the maximum in \eqref{Optimization Step} is attained when $y = A^{2/15}$ 
and therefore we obtain
\begin{align} \label{Case 1 Bound}
    S_{\mu_2}(X, \alpha) \ll  \dfrac{X(\log X)^3}{q^{1/6}}.
\end{align}
The treatment for the other cases is similar. In particular, when $y^6/B$ is the maximum, one requires 
$X^{6/17}\leqslant q \leqslant X^{9/17}$ and in this range, we have
\begin{align} \label{Case 2 Bound}
    S_{\mu_2}(X, \alpha) \ll_{\varepsilon}  X^{16/17+\varepsilon}.
\end{align}
Finally, when $X^{9/17}\leqslant q \leqslant X$, then $y^{15/2}/C$ is the maximum and we arrive at 
\begin{align} \label{Case 3 Bound}
    S_{\mu_2}(X, \alpha) \ll  X^{7/8}q^{1/8}(\log X)^3.
\end{align}
Combining the estimates \eqref{Case 1 Bound}, \eqref{Case 2 Bound} and \eqref{Case 3 Bound}, we conclude that
\[
\sum_{n\leqslant X }(\mu*\mu)(n)\e(n\alpha) \ll_{\varepsilon} X^{16/17+\varepsilon}+\bigg(\dfrac{X}{q^{1/6}} + X^{7/8}q^{1/8}\bigg)(\log X)^{3}. 
\]
This completes the proof for the case when $k=2$.

\subsection{Induction step} Fix $\varepsilon>0$. Assume that for $X \geqslant 2$,
\begin{align*}
   \sum_{n\leqslant X }\mu_k(n)\e(n\alpha) \ll_{\varepsilon} X^{a_k+\varepsilon}+ \dfrac{X(\log X)^{k^2}}{q^{b_k}} + X^{c_k}q^{1-c_k}(\log X)^{k^2}, 
\end{align*}
for some $k=\ell \geqslant 2$ and we want to prove our desired result when $k=\ell+1$. Similar to Lemma \ref{Gamma-Mobius}, one can show that if $\alpha\in \mathbb{R}$ such that $|\alpha-a/q|\leqslant \gamma/q^2$ for some $(a, q)=1$ and $\gamma \geqslant 1$, then
\begin{align}\label{With gamma}
   \sum_{n\leqslant X }\mu_\ell(n)\e(n\alpha) \ll_{\varepsilon} \gamma \bigg(X^{a_{\ell}+\varepsilon}+ \dfrac{X(\log X)^{\ell^2}}{q^{b_{\ell}}} + X^{c_{\ell}}q^{1-c_{\ell}}(\log X)^{\ell^2} \bigg), 
\end{align}
where the implied constant in \eqref{With gamma} depends at most on $\varepsilon$. Equipped with \eqref{With gamma}, for some parameters $2\leqslant M, N\leqslant X$ to be chosen later, we can write
\begin{align} 
S_{\mu_{\ell+1}}(X, \alpha)=&\: \sum_{\substack{mn\leqslant X \\ m> M, n>N}}\mu(m)\mu_\ell (n)\e(mn\alpha) + \sum_{m\leqslant M}\mu(m)\sum_{n\leqslant X/m}\mu_\ell (n)\e(mn\alpha)\notag \\
\notag &+\sum_{n\leqslant N}\mu_\ell (n)\sum_{m\leqslant X/n}\mu(m)\e(mn\alpha)-\sum_{\substack{m\leqslant M\\ n\leqslant N}}\mu(m)\mu _\ell (n)\e(mn\alpha)\\
=&\:  S_{\mu _{\ell+1} }^{(1)}(X, \alpha)+S_{\mu _{\ell+1} }^{(2)}(X, \alpha) + S_{\mu _{\ell+1} }^{(3)}(X, \alpha)-S_{\mu _{\ell+1} }^{(4)}(X, \alpha) \label{Dividing Sums I}.
\end{align}
Again, applying Lemma \ref{lem2}, we obtain
\begin{align} 
S_{\mu _{\ell+1} }^{(1)}(X, \alpha) \ll \bigg(X^{1/2}q^{1/2}+\dfrac{X}{M^{1/2}} + \dfrac{X}{N^{1/2}} + \dfrac{X}{q^{1/2}}\bigg)(\log X)^{(\ell+1)^2} \label{Bound 1}.
\end{align}
Trivially, we have
\begin{align}
S_{\mu _{\ell+1}}^{(4)}(X, \alpha) \ll  MN (\log N)^{\ell-1}. \label{Bound 4}
\end{align}
Choosing $\gamma = m/(m,q)$ and applying \eqref{With gamma}, we arrive at
\begin{align}
    S_{\mu _{\ell+1} }^{(2)}(X, \alpha) 
    \ll_{\varepsilon} X^{a_\ell+\varepsilon}M^{2-a_\ell-\varepsilon}+ \dfrac{XM(\log X)^{(\ell+1)^2}}{q^{b_\ell}} + X^{c _\ell}q^{1-c _\ell}M^{2-c_\ell}(\log X)^{(\ell+1)^2}. \label{Bound 2}
\end{align}
Note that Lemma \ref{Divisor Sum Powers} shows that
\[ \sum_{n \leqslant N } d _\ell^2(n) \ll N(\log N)^{\ell^2-1}.\]
Hence applying Lemma \eqref{Gamma-Mobius} and Cauchy-Schwartz inequality in conjunction, we can write
\begin{align}\label{Bound 3}
S_{\mu _{\ell+1} }^{(3)}(X, \alpha) &\ll_{\varepsilon} \sum_{n \leqslant N } d_\ell(n) \cdot \dfrac{n}{(n, q)}\bigg\{\bigg(\dfrac{X}{n}\bigg)^{4/5+\varepsilon} + \dfrac{X(n, q)^{1/2}(\log X)^3}{nq^{1/2}} + \bigg(\dfrac{X}{n}\bigg)^{1/2}\dfrac{q^{1/2}(\log X)^3}{(n, q)^{1/2}}\bigg\} \notag \\
&\ll_{\varepsilon}X^{4/5+\varepsilon}N^{6/5-\varepsilon}+ \dfrac{XN(\log X)^{(\ell+1)^2}}{q^{1/2}} + X^{1/2}q^{1/2}N^{3/2}(\log X)^{(\ell+1)^2}.
\end{align}
Substituting the estimates \eqref{Bound 1}, \eqref{Bound 4}, \eqref{Bound 2} and \eqref{Bound 3} in \eqref{Dividing Sums I} and choosing $M=N$, we have
\begin{equation*}
S_{\mu _{\ell+1} }(X, \alpha) \ll_{\varepsilon} X^{a _\ell+\varepsilon}M^{2-a _\ell-\varepsilon}+ \bigg(\dfrac{X}{M^{1/2}} +\dfrac{XM}{q^{b _\ell}} + X^{c _\ell}q^{1-c _\ell}M^{2-c _\ell} + X^{1/2}q^{1/2}M^{3/2}\bigg)(\log X)^{(\ell+1)^2}.
\end{equation*}
An optimization procedure similar to that for the case $k=2$ then shows that
\begin{align*}
S_{\mu _{\ell+1} }(X, \alpha) \ll_{\varepsilon}  X^{a _{\ell+1}+\varepsilon}+\dfrac{X(\log X)^{(\ell+1)^2}}{q^{b _{\ell+1}}} + X^{c _{\ell+1}}q^{1-c _{\ell+1}}(\log X)^{(\ell+1)^2},
\end{align*}
thereby completing the proof.
\qed
\section{Proof of Theorem \ref{Partition}} \label{sec:partitions}
We now set up the structure of the proof of Theorem \ref{Partition}. Recall from \eqref{2 Convolution Mobius} and \eqref{eq 1.10} that
\begin{align*}
    \Psi_{\mu*\mu}(z) &= \sum_{n=1}^\infty p_{\mu*\mu}(n) z^n = \prod_{n=1}^\infty (1-z^n)^{-(\mu*\mu)(n)}, \quad \lvert z \rvert <1, \\
    \Phi_{\mu*\mu}(z) &= \sum_{j=1}^\infty \sum_{n=1}^\infty \frac{(\mu*\mu)(n)}{j} z^{jn}, \quad \lvert z \rvert <1,
\end{align*}
and $\Psi_{\mu*\mu}(z) = \exp(\Phi_{\mu*\mu}(z))$. The strategy to prove Theorem \ref{Partition} is to employ the Hardy-Littlewood circle method. An application of Cauchy's theorem yields
\begin{align}\label{Cauchy}
    p_{\mu*\mu}(n) = \rho^{-n} \int_0^1 \exp(\Phi_{\mu*\mu}(\rho \e(\theta))-2\pi i n \theta) d\theta,
\end{align}
where $0< \rho < 1$. We write $\rho = \exp(-1/X)$, for some parameter $X>0$. As we shall see, for each value of $n$, we will choose $X = X(n)$ and therefore, $\rho=\rho(n)$ in \eqref{Cauchy} to be such that as $n \to \infty$, $X \to \infty$ and $\rho \to 1$. Therefore, in what follows, we can assume $X$ to be sufficiently large. Invoking the periodicity of the integrand in \eqref{Cauchy}, we can replace the unit interval by the interval $\mathcal{U} = [-X^{-1} (\log X)^A ,1-X^{-1} (\log X)^A)$, where $A>0$ is a sufficiently large constant. This choice $\mathcal{U}$ keeps the region around the origin from being split into two regions.\\

Now we are ready to set up the major and minor arcs. Let $a \in \mathbb{N}\cup \{0\}$ and $q \in \mathbb{N}$ such that $0 \leqslant a < q$ and $(a,q)=1$. Choose
\begin{align} \label{eq:def_delta_q}
    \delta_q = q^{-1}X^{-1}(\log X)^A \quad \textnormal{and} \quad Q = (\log X)^A,
\end{align} and define
\begin{align}
    \mathfrak{M}(q,a) = \bigg\{\theta \in \mathcal{U}: \bigg|\theta-\frac{a}{q}\bigg| \leqslant \delta_q \bigg\}.
\end{align}
The major arcs $\mathfrak{M}$ and the minor arcs $\mathfrak{m}$ are
\begin{align}\label{Major and Minor Arcs}
    \mathfrak{M} = \bigcup_{\substack{1 \leqslant a \leqslant q \leqslant Q \\ (a,q)=1}} \mathfrak{M}(q,a) \quad \textnormal{and} \quad \mathfrak{m} =  \mathcal{U} \backslash \mathfrak{M}.
\end{align}
We will show that if $\theta \in \mathfrak{M}$, then
\begin{align}\label{eq 1}
        \Phi_{\mu * \mu}(\rho \e (\theta)) \ll_A \frac{X}{(\log X)^{A}}.
\end{align}
On the other hand, if $\theta \in \mathfrak{m}$, then one has 
\begin{align}\label{eq 2}
        \Phi_{\mu * \mu}(\rho \e (\theta)) \ll_A \frac{X}{(\log X)^{A/9}}.
\end{align}  
With \eqref{eq 1} and \eqref{eq 2} in mind, let us first prove Theorem \ref{Partition}.

\begin{proof}[Proof of Theorem \textnormal{\ref{Partition}}] We rewrite \eqref{Cauchy} as
\begin{align}
    p_{\mu*\mu}(n) = \rho^{-n}\bigg(\int_{\mathfrak{M}}  + \int_{\mathfrak{m}}\bigg) \exp(\Phi_{\mu*\mu}(\rho \e (\theta))-2\pi i n \theta) d\theta.
\end{align}
First, we consider the integral over the minor arcs . Let $X = \sqrt{n} (\log n)^{A/18}$. Since $\rho =\exp(-1/X)$, using \eqref{eq 2}, we can write
\begin{align}
\rho^{-n} \int_{\mathfrak{m}} \exp(\Phi_{\mu*\mu}(\rho \e (\theta))-2\pi i n \theta) d\theta \ll \exp \bigg(\frac{n}{X}+\frac{X}{(\log X)^{A/9}} \bigg) \ll \exp \bigg(\frac{\sqrt{n}}{(\log n)^{A/20}} \bigg).
\end{align}
A concomitant argument holds for the major arcs. Therefore, choosing $A$ sufficiently large, we obtain
\[
\log p_{\mu*\mu}(n) \ll_B \frac{\sqrt{n}}{(\log n)^{B}},
\]
for any fixed $B>0$.
\end{proof}

Therefore, it suffices to establish \eqref{eq 1} and \eqref{eq 2}, which we do in Sections \ref{sec:nonprincipal} and \ref{sec:minorarcs} respectively.

\section{Major arcs} \label{sec:nonprincipal}
We begin our discussion on the major arcs by estimating M\"{o}bius convolutions in arithmetic progressions. The key ingredient required for the following result is Lemma \ref{Davenport}.
\begin{lemma}\label{Davenport Lemma for Convolutions}
Let $X \geqslant 2$. Suppose $B>0$ is fixed. Let $r, q\geqslant 1$ be positive integers with $q\leqslant(\log X)^B$. Then there exists a constant $c_B>0$ depending only on $B$ such that
\begin{align*}
    S(X) := \sum_{\substack{n \leqslant X \\ n \equiv r \modu q}} (\mu * \mu)(n) \ll_B X\exp(-c_B\sqrt{\log X}).
\end{align*}
\end{lemma}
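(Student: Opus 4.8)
The plan is to reduce the sum over $(\mu*\mu)(n)$ in an arithmetic progression to sums over $\mu$ in arithmetic progressions, to which Lemma \ref{Davenport} applies directly. Writing $(\mu*\mu)(n) = \sum_{de = n} \mu(d)\mu(e)$, the condition $n \equiv r \imod{q}$ becomes $de \equiv r \imod q$, so
\[
S(X) = \sum_{\substack{de \leqslant X \\ de \equiv r \imod q}} \mu(d)\mu(e) = \sum_{d \leqslant X} \mu(d) \sum_{\substack{e \leqslant X/d \\ de \equiv r \imod q}} \mu(e).
\]
The obstacle is that the inner congruence $de \equiv r \imod q$ is only solvable in $e$ when $(d,q) \mid r$, and then $e$ is pinned down modulo $q/(d,q)$ rather than modulo $q$; moreover $\mu(e)$ is supported on squarefree $e$, but Lemma \ref{Davenport} is stated for $\sum_{n \equiv r \imod q}\mu(n)$ with no coprimality restriction between the modulus and the residue, so this is fine as long as the modulus stays $\leqslant (\log X)^B$. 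Since $q/(d,q) \leqslant q \leqslant (\log X)^B$, Lemma \ref{Davenport} gives, for each fixed $d$ with $(d,q)\mid r$,
\[
\sum_{\substack{e \leqslant X/d \\ de \equiv r \imod q}} \mu(e) \ll_B \frac{X}{d}\exp\!\bigl(-c_B\sqrt{\log(X/d)}\,\bigr),
\]
provided $X/d$ is not too small.

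First I would split the range of $d$ at $d \leqslant \sqrt{X}$ versus $d > \sqrt{X}$ (equivalently, exploit the symmetry of $\mu * \mu$ in $d$ and $e$ so that after a standard hyperbola argument one always has the ``long'' variable $\geqslant \sqrt X$). For $d \leqslant \sqrt{X}$ we have $\log(X/d) \geqslant \tfrac12\log X$, so $\exp(-c_B\sqrt{\log(X/d)}) \leqslant \exp(-c_B'\sqrt{\log X})$ with $c_B' = c_B/\sqrt2$; summing the bound above over $d \leqslant \sqrt X$ costs only a factor $\sum_{d\leqslant\sqrt X} 1/d \ll \log X$, which is absorbed into the exponential by slightly shrinking the constant. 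This handles the ``Dirichlet hyperbola'' contribution
\[
\sum_{d \leqslant \sqrt X}\mu(d)\sum_{\substack{e \leqslant X/d \\ de \equiv r \imod q}}\mu(e) \ll_B X\exp\!\bigl(-c_B''\sqrt{\log X}\,\bigr),
\]
and by symmetry the same bound holds for the sum with $e \leqslant \sqrt X$; adding these and subtracting the overlap $d,e \leqslant \sqrt X$ (which is trivially $\ll X$, hence also absorbed — or better, handled by the same estimate) gives the full $S(X)$.

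The one point requiring a little care is the dependence of the modulus on $d$: when we apply Lemma \ref{Davenport} to the inner sum we are using modulus $q_d := q/(d,q)$ and some residue $r_d$, and the implied constant and the constant $c_B$ in Lemma \ref{Davenport} depend only on $B$ (through the bound $q_d \leqslant q \leqslant (\log X)^B$), uniformly in $d$. So the constants do not degrade as $d$ varies. I also need $(d,q) \mid r$ for the inner congruence to have solutions — when it fails the inner sum is empty and contributes nothing, which only helps. Finally one checks that for $d$ in a bounded range near $\sqrt X$ where $\log(X/d)$ could in principle be comparable to but not a fixed fraction of $\log X$, the cutoff at $\sqrt X$ already guarantees $\log(X/d) \geqslant \tfrac12 \log X$, so no separate treatment is needed. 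Collecting everything yields $S(X) \ll_B X\exp(-c_B\sqrt{\log X})$ with a possibly smaller $c_B>0$, as claimed. The main obstacle, such as it is, is purely bookkeeping: tracking that the extra $\log X$ from the divisor sum and the halving of $\log(X/d)$ are both swallowed by the exponential save, which is standard.
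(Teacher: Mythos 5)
Your proposal is correct and follows essentially the same route as the paper: a Dirichlet hyperbola decomposition of $(\mu*\mu)$ followed by Lemma \ref{Davenport} applied to the inner sums, with the long variable bounded below by $\sqrt{X}$ so that $\log(X/d)\geqslant\tfrac12\log X$ and the divisor-sum factor $\log X$ is absorbed into the exponential; the paper phrases the congruence condition via the solution set $\mathcal{B}\subset\mathbb{Z}/q\mathbb{Z}$ of $dx\equiv r\imod q$ rather than reducing the modulus to $q/(d,q)$, a cosmetic difference. One caution: your parenthetical claim that the overlap term is ``trivially $\ll X$, hence also absorbed'' would not suffice (the target bound is smaller than $X$), but your alternative of applying the same Davenport-based estimate to the inner sum of the overlap is exactly what the paper does for $S_3(X)$, so the argument stands.
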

\begin{proof}
By Dirichlet's hyperbola method, we break $S(X)$ into three sub-sums, i.e., $S(X) = S_1(X)+S_2(X) - S_3(X)$, where
\begin{align}
\notag S_1(X) &= \sum_{n_1 \leqslant \sqrt{X}} \mu(n_1) \sum_{\substack{n_2 \leqslant X/n_1 \\ n_1 n_2 \equiv r \modu q}} \mu(n_2),\\
\notag  S_2(X) &= \sum_{n_2 \leqslant \sqrt{X}} \mu(n_2) \sum_{\substack{n_1 \leqslant X/n_2 \\ n_1 n_2 \equiv r \modu q}} \mu(n_2),\\
\notag S_3(X) &= \sum_{n_1 \leqslant \sqrt{X}} \mu(n_1) \notag \sum_{\substack{n_2 \leqslant \sqrt{X} \\ n_1 n_2 \equiv r \modu q}} \mu(n_2).
\end{align}
We treat $S_3(X)$ first. Consider the inner sum over $n_2$.  Note that if $n_1 n_2 \equiv r \modu q$, then $n_1(n_2 + q) \equiv r \modu q$. With this in mind, we write 
\begin{align}\label{Def : B}
\mathcal{B}:=\big\{x\modu q\colon n_1x \equiv r \modu q\big\}.
\end{align}
Then using Lemma \ref{Davenport}, it follows that
\begin{align}\label{A3 Estimate}
\sum_{\substack{n_2 \leqslant \sqrt{X} \\ n_1 n_2 \equiv r \modu q}} \mu(n_2) = \sum_{b \in \mathcal{B}} \sum_{\substack{n_2 \equiv b \modu q \\ n_2 \leqslant \sqrt{X}}} \mu(n_2) \ll_B \operatorname{card}(\mathcal{B}) \sqrt{X} \exp(-c_1 \sqrt{\log X}),
\end{align}
for some constant $c_1=c_1(B)>0$. The bound in \eqref{A3 Estimate} holds for all $q \leqslant (\log X)^B$. Therefore, by trivially bounding the sum over $n_1$, we obtain
\begin{align}\label{A3 Estimate Final}
S_3(X) \ll_B \bigg(\sum_{n_1 \leqslant \sqrt{X}} |\mu(n_1)|\bigg) \operatorname{card}(\mathcal{B}) \sqrt{X} \exp(-c_1 \sqrt{\log X}) \ll_B X \exp(-c_2 \sqrt{\log X}),
\end{align}
for some constant $c_2=c_2(B)>0$.

Note that $S_1(X)=S_2(X)$ by symmetry. Therefore, it is enough to estimate the sum $S_1(X)$. We analyze the inner sum over $n_2$ first. Again applying Lemma \ref{Davenport}, we obtain
\begin{align*}
\sum_{\substack{n_2 \leqslant X/n_1 \\ n_1 n_2 \equiv r \modu q}} \mu(n_2) = \sum_{b \in \mathcal{B}} \sum_{\substack{n_2 \equiv b \modu q \\ n_2 \leqslant X/n_1}} \mu(n_2) \ll_B \operatorname{card}(\mathcal{B}) \frac{X}{n_1} \exp \bigg(-c_3 \sqrt{\log\frac{X}{n_1}}\bigg),
\end{align*}
for some constant $c_3=c_3(B)>0$, where $\mathcal{B}$ is as in \eqref{Def : B}. This leaves us with
\begin{align}
S_1(X) &\ll_B \sum_{n_1 \leqslant \sqrt{X}} \operatorname{card}(\mathcal{B}) \frac{X}{n_1} \exp \bigg(-c_3 \sqrt{\log\frac{X}{n_1}}\bigg) \ll_B X\exp\big(-c_4\sqrt{\log X}\big),\label{A1 Estimate}
\end{align}
for some constant $c_4=c_4(B)>0$. Combining \eqref{A3 Estimate Final} and \eqref{A1 Estimate}, we arrive at the conclusion.
\end{proof}
Applying Lemma \ref{Davenport Lemma for Convolutions}, one can quickly conclude the following.
\begin{lemma}\label{Partial Summation Lemma}
Let $X \geqslant 2$ and suppose $B>0$ is fixed. Let $\theta \in \mathbb{R}$ such that $\theta = a/q+\beta$ where $a,q \in \mathbb{Z}$ with $0 \leqslant a < q \leqslant(\log X)^B$, $(a,q)=1$ and $\lvert \beta \rvert \leqslant X ^{-1}(\log X)^B$. Then there exists a constant $c_B>0$ depending only on $B$ such that
\begin{align*}
    \sum_{n\leqslant X }(\mu*\mu)(n)\e(n\theta) \ll_B X\exp(-c_B\sqrt{\log X}).
\end{align*}
\end{lemma}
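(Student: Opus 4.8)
The plan is to derive Lemma \ref{Partial Summation Lemma} from Lemma \ref{Davenport Lemma for Convolutions} by a routine partial summation, so the only real content is bookkeeping the oscillating factor $\e(n\beta)$ and splitting off the contribution of $a/q$.

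\textbf{Step 1: reduce to the case $\beta=0$ via the arithmetic progression decomposition.} Write $\e(n\theta) = \e(na/q)\e(n\beta)$. Group the terms $n\le X$ according to their residue class $r$ modulo $q$; since $\e(na/q)$ depends only on $r \pmod q$, we get
\begin{align*}
\sum_{n\le X}(\mu*\mu)(n)\e(n\theta) = \sum_{r=0}^{q-1}\e\!\Big(\frac{ra}{q}\Big)\sum_{\substack{n\le X\\ n\equiv r\ (q)}}(\mu*\mu)(n)\e(n\beta).
\end{align*}
It therefore suffices to bound the inner sum $\sum_{n\le X,\ n\equiv r\,(q)}(\mu*\mu)(n)\e(n\beta)$ by $\ll_B X\exp(-c_B\sqrt{\log X})$ uniformly in $r$, and then sum over the $q\le(\log X)^B$ residues (which costs only a factor $(\log X)^B$, harmless after adjusting the constant $c_B$).

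\textbf{Step 2: partial summation against $\e(n\beta)$.} Let $T(t) := \sum_{n\le t,\ n\equiv r\,(q)}(\mu*\mu)(n)$, so that by Lemma \ref{Davenport Lemma for Convolutions} we have $T(t)\ll_B t\exp(-c_B\sqrt{\log t})$ for $t\ge 2$ (and $T(t)\ll 1$ for $t<2$). Abel summation gives
\begin{align*}
\sum_{\substack{n\le X\\ n\equiv r\,(q)}}(\mu*\mu)(n)\e(n\beta) = T(X)\e(X\beta) - 2\pi i\beta\int_{1}^{X} T(t)\e(t\beta)\,dt.
\end{align*}
The boundary term is $\ll_B X\exp(-c_B\sqrt{\log X})$ immediately. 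For the integral, bound $|\e(t\beta)|=1$ and use $|\beta|\le X^{-1}(\log X)^B$ together with $T(t)\ll_B t\exp(-c_B\sqrt{\log t})$: the integral is $\ll_B X\cdot X\exp(-c_B'\sqrt{\log X})$ (splitting at $t=\sqrt X$ to handle the slowly-varying exponential cleanly, since $\exp(-c_B\sqrt{\log t})$ is essentially constant on $[\sqrt X, X]$), and multiplying by $|\beta|\le X^{-1}(\log X)^B$ yields $\ll_B X(\log X)^B\exp(-c_B'\sqrt{\log X})\ll_B X\exp(-c_B''\sqrt{\log X})$. Absorbing all powers of $\log X$ into a slightly smaller constant in the exponent completes the bound for the inner sum.

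\textbf{Step 3: assemble.} Combining Steps 1 and 2, $\sum_{n\le X}(\mu*\mu)(n)\e(n\theta)\ll_B q\cdot X\exp(-c_B''\sqrt{\log X})\ll_B X\exp(-c_B'''\sqrt{\log X})$, using $q\le(\log X)^B$ once more. I do not anticipate a genuine obstacle here — the argument is entirely standard. The only point requiring minor care is ensuring uniformity in the residue class $r$ when invoking Lemma \ref{Davenport Lemma for Convolutions} (the implied constant there is independent of $r$, which is exactly what that lemma provides) and checking that the loss of $(\log X)^B$ factors from the $\beta$-factor and the sum over $r$ is swallowed by the $\exp(-c_B\sqrt{\log X})$ saving, which it is since $\sqrt{\log X}$ dominates $\log\log X$.
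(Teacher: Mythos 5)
Your proposal is correct and uses the same ingredients as the paper's proof — partial (Abel) summation against the $\e(n\beta)$ factor, decomposition into residue classes modulo $q$, and Lemma \ref{Davenport Lemma for Convolutions} — merely performed in the opposite order (the paper does partial summation first to reduce to $\theta=a/q$, then splits into progressions). The bookkeeping of the $(\log X)^B$ losses from $|\beta|X$ and the sum over residues matches the paper's, so this is essentially the same argument.
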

\begin{proof} By partial summation, we can write
\begin{align}\label{Partial Summation}
     \sum_{n\leqslant X }(\mu*\mu)(n)\e(n\theta)\ll_B (1 + |\beta|X)\max_{1\leqslant y\leqslant X }\bigg|\sum_{n\leqslant y}(\mu*\mu)(n)\e(na/q)\bigg|.
\end{align}
Consequently, it is enough to estimate the sum
\begin{align}\label{Breaking into arithmetic progressions}
    \sum_{n\leqslant y}(\mu*\mu)(n)\e(na/q)=\sum_{b=1}^q \e\bigg(\dfrac{ab}{q}\bigg)\sum_{\substack{n\leqslant y\\ n\equiv b\modu q}}(\mu*\mu)(n).
\end{align}
An immediate application of Lemma \ref{Davenport Lemma for Convolutions} shows that for some constant $c_1=c_1(B)>0$ depending only on $B$, one has
\begin{align}\label{AP Bound}
  \sum_{\substack{n\leqslant y\\ n \equiv b \modu q}}(\mu*\mu)(n)\ll_B y \exp(-c_1\sqrt{\log y}).
\end{align}
Combining \eqref{Partial Summation}, \eqref{Breaking into arithmetic progressions} and \eqref{AP Bound}, we conclude that
\begin{align*}
     \sum_{n\leqslant X }(\mu*\mu)(n)\e(n\theta)\ll_B (1+|\beta|X)qX\exp(-c_1\sqrt{\log X}) \ll_B X\exp(-c_B\sqrt{\log X}),
\end{align*}
for some constant $c_B>0$ depending only on $B$.
\end{proof}
Recall from \eqref{Major and Minor Arcs} that if $\theta \in \mathfrak{M}$, we can write $\theta=a/q+\beta$ with $q\leqslant(\log X)^A$ and $|\beta|\leqslant\delta_q \leqslant X ^{-1}(\log X)^A$, where $A>0$ is sufficiently large. Using Lemmas \ref{Davenport Lemma for Convolutions} and \ref{Partial Summation Lemma}, we can now prove \eqref{eq 1} for $\theta \in \mathfrak{M}$.

\begin{lemma}\label{Major Arcs Theorem}
Let $\theta \in \mathfrak{M}$ as defined in \eqref{Major and Minor Arcs}. Then for any fixed $A>0$,
\begin{align*}
        \Phi_{\mu * \mu}(\rho \e (\theta)) \ll_A \frac{X}{(\log X)^{A}}.
\end{align*}
\end{lemma}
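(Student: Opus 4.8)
The plan is to expand $\Phi_{\mu*\mu}(\rho\e(\theta))$ into a sum over the exponents appearing in the product $\prod_{n}(1-z^{n})^{-(\mu*\mu)(n)}$ and to estimate the resulting twisted sums over $(\mu*\mu)$ using Lemma \ref{Partial Summation Lemma}. Since $\Psi_{\mu*\mu}=\exp(\Phi_{\mu*\mu})$ and $\Phi_{\mu*\mu}(z)=\sum_{j\geq1}\frac1j\sum_{n\geq1}(\mu*\mu)(n)z^{jn}$, putting $z=\rho\e(\theta)$ with $\rho=\exp(-1/X)$ I would write
\[
\Phi_{\mu*\mu}(\rho\e(\theta))=\sum_{j=1}^{\infty}\frac1j\,T_j,\qquad T_j:=\sum_{n=1}^{\infty}(\mu*\mu)(n)\exp(-jn/X)\,\e(jn\theta),
\]
which converges absolutely because $|(\mu*\mu)(n)|\leq d(n)$. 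I would then split the $j$-sum at $J:=(\log X)^{A+1}$, treating small and large $j$ separately.

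For $1\leq j\leq J$ I would use cancellation. If $\theta=a/q+\beta\in\mathfrak M$ then $(a,q)=1$, $q\leq(\log X)^{A}$ and $|\beta|\leq\delta_q=q^{-1}X^{-1}(\log X)^{A}$; reducing $j\theta$ modulo $1$ gives $j\theta\equiv a_j/q_j+j\beta$ with $a_j/q_j$ equal to $ja/q$ in lowest terms, so that $q_j\mid q$ (hence $q_j\leq(\log X)^{A}$) while $|j\beta|\leq J\delta_q\leq X^{-1}(\log X)^{2A+1}$. Thus $T_j$ is a smoothly weighted exponential sum over $(\mu*\mu)$ twisted by a frequency that is still of major-arc type at the scale $Y:=X/j$, where $\log Y\asymp\log X$ because $J$ is only a power of $\log X$. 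After a partial summation removing the weight $\exp(-n/Y)$, the estimation of $T_j$ reduces to bounding the partial sums $\sum_{n\leq t}(\mu*\mu)(n)\e(n(a_j/q_j+j\beta))$; for $t$ in a range of the form $\exp(\sqrt{\log X})\leq t\leq X(\log X)^{C}$ with $C=C(A)$, the hypotheses of Lemma \ref{Partial Summation Lemma} hold (provided its parameter is taken large enough in terms of $A$), so these partial sums are $\ll_A t\exp(-c_A\sqrt{\log X})$; the part of the $t$-integral beyond $X(\log X)^{C}$ is annihilated by the exponential weight, and the part below $\exp(\sqrt{\log X})$ is negligible since the interval is so short. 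This should give $T_j\ll_A X\exp(-c_A\sqrt{\log X})$ uniformly for $1\leq j\leq J$, and therefore
\[
\sum_{1\leq j\leq J}\frac1j\,|T_j|\ll_A X\exp(-c_A\sqrt{\log X})\log\log X\ll\frac{X}{(\log X)^{A}}.
\]

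For $j>J$ I would bound trivially: $|T_j|\leq\sum_n d(n)\exp(-jn/X)$, which by partial summation from $\sum_{n\leq t}d(n)\ll t\log(2t)$ is $\ll (X/j)\log(2X)$ when $j\leq X$ and $\ll\exp(-j/X)$ when $j>X$. Hence
\[
\sum_{j>J}\frac1j\,|T_j|\ll X\log(2X)\sum_{J<j\leq X}\frac1{j^{2}}+\sum_{j>X}\frac{\exp(-j/X)}{j}\ll\frac{X\log(2X)}{J}+O(1)\ll\frac{X}{(\log X)^{A}},
\]
the choice $J=(\log X)^{A+1}$ being made precisely so that this holds. Adding the two ranges completes the proof. (Note that the arc around the origin is included in $\mathfrak M$ via $q=1$, and it is handled by exactly the same argument, so the explicit formula of Theorem \ref{lem:principalmajor} is not needed here.)

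The only delicate point — and the one I expect to require the most care — is the small-$j$ analysis. One must check that multiplying $\beta$ by $j$, for $j$ up to a power of $\log X$, keeps $j\theta$ within the major-arc regime so that Lemma \ref{Partial Summation Lemma} still applies; this works exactly because $\delta_q$ was chosen as small as $q^{-1}X^{-1}(\log X)^{A}$. One must also note that, for small $j$, the smoothing factor $\exp(-jn/X)$ does not by itself make the terms with $n>X$ negligible, so the Siegel--Walfisz-type cancellation of Lemma \ref{Partial Summation Lemma} has to be available for partial sums somewhat longer than $X$; both issues are resolved by taking the exponent in Lemma \ref{Partial Summation Lemma} sufficiently large in terms of $A$.
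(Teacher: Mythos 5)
Your proposal is correct and follows essentially the same route as the paper: expand $\Phi_{\mu*\mu}$ as $\sum_j \frac1j T_j$, cut the $j$-sum at a power of $\log X$, bound the tail trivially via $|(\mu*\mu)(n)|\le d(n)$, and for small $j$ observe that $j\theta$ stays of major-arc type (denominator dividing $q$, perturbation $\ll X^{-1}(\log X)^{O_A(1)}$) so that the Siegel--Walfisz-type Lemma \ref{Partial Summation Lemma} applies to the partial sums after removing the smooth weight. The only differences are cosmetic (choice of $J$, where the $t$-range is truncated, and $\exp(-c\sqrt{\log t})$ versus $\exp(-c\sqrt{\log X})$ near the lower cutoff, which does not affect the final bound).
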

\begin{proof} Since $\rho = \exp(-1/X)$, it follows from \eqref{eq 1.10} that
\begin{align*}
    \Phi_{\mu * \mu}(\rho \e(\theta)) = \sum_{j=1}^\infty\sum_{n=1}^\infty \frac{1}{j} (\mu * \mu)(n) e^{-nj / X} \e(j n \theta).
\end{align*}
We replace the exponentially damped term above by the integral
\begin{align*}
    e^{-nj / X} = \int_n^\infty jX^{-1}e^{-tj/X}\, dt,
\end{align*}
leaving us with
\begin{align*}
    \Phi_{\mu * \mu}(\rho \e(\theta)) = \sum_{j=1}^\infty \frac{1}{j} \int_1^\infty jX^{-1}e^{-tj/X} \sum_{n \leqslant t}  (\mu * \mu)(n) \e(j n \theta) \, dt.
\end{align*}
Suppose $J=(\log X)^{2A}$. Trivially, we obtain
\begin{align}\label{Integral Truncating}
\sum_{j>J}\frac{1}{j} \int_1^\infty jX^{-1}e^{-tj/X} &\sum_{n \leqslant t}  (\mu * \mu)(n) \e(j n \theta) \, dt \notag \\
   & \ll \sum_{j>J}^\infty \frac{1}{j} \int_1^\infty jX^{-1}e^{-tj/X} t \log t \, dt  \notag \\
   &\ll \sum_{j>J}^\infty \frac{X \log X}{j^2} \int_{j/X}^\infty ue^{-u} \log u  \, du \ll_A \frac{X}{(\log X)^{A}}.
\end{align}
Hence, in what follows, it suffices to work with the sum
\begin{align}\label{Truncated J}
\sum_{j=1}^J \frac{1}{j} \int_1^\infty jX^{-1}e^{-tj/X} \sum_{n \leqslant t}  (\mu * \mu)(n) \e(j n \theta) \, dt.
\end{align}
Writing $\theta=a/q+\beta$ with $q\leqslant(\log X)^A$ and $|\beta|\leqslant\delta_q \leqslant X ^{-1}(\log X)^A$, one has
\[ \theta _j := j\theta = \frac{aj/(j,q)}{q/(j,q)}+ j\beta = \frac{a _j}{q _j}+\beta _j,
\]
where $(a _j,q _j)=1$, $1 \leqslant q _j \leqslant q \leqslant(\log X)^{A} $ and $\beta _j \leqslant X ^{-1}(\log X)^{3A}$. We can now break the inner integral in \eqref{Truncated J} as
\begin{align}
  I &= \int_1^\infty jX^{-1}e^{-tj/X} \sum_{n \leqslant t}  (\mu * \mu)(n) \e(j n \theta) \, dt \notag \\
  &= \bigg(\int_1^{X \log X} + \int_{X\log X}^\infty\bigg) jX^{-1}e^{-tj/X} \sum_{n \leqslant t}  (\mu * \mu)(n) \e(j n \theta) \, dt = I_1+I_2 \notag, 
\end{align}
say. For $I_2$, one has
\begin{align}
    I_2 & \ll   \int_{X\log X}^\infty jX^{-1}e^{-tj/X} t \log t \, dt  \ll \frac{X \log X}{j} \int_{\log X}^\infty ue^{-u} \log u  \, du \ll_A \frac{X}{(\log X)^{3A}}. \label{I_2 Estimate}
\end{align}
For $I_1$, an application of Lemma \ref{Partial Summation Lemma} shows that
\begin{align}\label{I_1 Estimate}
    I_1 & \ll_A X \log X \exp(-C_A \sqrt{\log X})  \int_1^{X\log X} jX^{-1}e^{-tj/X} \, dt \notag \\
   & \ll_A X \log X \exp(-C_A \sqrt{\log X}) \int_{1}^{X\log X} e^{-u}   \, du \ll_A \frac{X}{(\log X)^{3A}},
\end{align}
where $C_A>0$ is a constant depending only on $A$. Putting together \eqref{I_2 Estimate} and \eqref{I_1 Estimate}, we obtain
\[ \sum_{j=1}^J \frac{1}{j} \int_1^\infty jX^{-1}e^{-tj/X} \sum_{n \leqslant t}  (\mu * \mu)(n) \e(j n \theta) \, dt \ll_A \frac{X}{(\log X)^{A}}.
\]
Combining this with \eqref{Integral Truncating}, we arrive at our desired conclusion.
\end{proof}
\begin{remark}
The above arguments and the statements of Lemmas \ref{Davenport Lemma for Convolutions}, \ref{Partial Summation Lemma} and \ref{Major Arcs Theorem} hold for a general $\mu_k$ with appropriate changes.
\end{remark}

\section{Minor Arcs} \label{sec:minorarcs}
In general, being able to show that the contribution of the minor arcs is of lower order than the principal arcs is the most challenging aspect of successfully deploying the circle method. In situations such as \cite{bmz, gafnipowers} which involved partitioning with respect to powers or powers in arithmetic progressions, one requires Weyl's inequality \cite[Lemma 2.4]{V1997} in conjunction with Hua's inequality \cite[Theorem 4.1]{V1997}. Adapting Weyl's inequality was also a key aspect for partitioning with respect to polynomials in \cite{dunnrobles}. Restricted partitions with respect to primes \cite{vaughanprimes} necessitated Vinogradov's inequality \cite[Chapter 3]{V1997}. However, Vinogradov's inequality was not sufficient for functions of primes. In \cite{gafniprimepowers}, Gafni employed a bound by Kawada and Wooley \cite{kawadawooley}, and in \cite{drzz}, a new bound altogether for exponential sums dealing with semiprimes was introduced and proved. This was also the case in \cite[$\mathsection$6]{brzz} where a bound for exponential sums twisted by generalized divisor functions had to be supplied. In our current situation, the exponential sums discussed in Section \ref{sec:introduction} and proved in Sections \ref{sec:proofsexpsums} and \ref{sec:secondexpsumproof} will serve as the main technology to bound the contributions from the minor arcs. It is worth remarking that interesting exponential sums for minor arcs can also be found in \cite{kumchevalmostprimes}. \\

We begin by recording the immediate corollary from Theorem \ref{Mobius-k convolutions} for the case when $k=2$.
\begin{corollary} \label{theorem 4}
Let $\alpha\in \mathbb{R}$ and consider a reduced fraction $a/q$ such that $|\alpha-a/q|\leqslant1/q^2$. Then for $X \geqslant 2$ and any fixed $\varepsilon>0$, we have
\begin{align*}
\sum_{n\leqslant X }(\mu*\mu)(n)\e(n\alpha) \ll_{\varepsilon} X^{16/17+\varepsilon}+\bigg(\dfrac{X}{q^{1/6}} + X^{7/8}q^{1/8}\bigg)(\log X)^{3}. 
\end{align*}
\end{corollary}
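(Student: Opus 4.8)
The plan is to observe that Corollary \ref{theorem 4} is precisely the $k=2$ instance of Theorem \ref{Mobius-k convolutions}, for which the exponents specialize as $a_2 = \frac{4-4/5}{5-8/5} = \frac{16}{17}$, $b_2 = \frac{1}{6}$ and $c_2 = \frac{4-1/2}{5-1} = \frac{7}{8}$, and the power of the logarithm $k^2 = 4$ can in fact be sharpened to $(\log X)^3$ in this case since no appeal to Lemma \ref{Divisor Sum Powers} is needed (the divisor weight $d_1 \equiv 1$). Thus nothing new must be proved: one simply cites the case $k=2$ carried out in Section \ref{sec:secondexpsumproof}. For completeness I would recall the skeleton of that argument.

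First, I would fix parameters $2 \leqslant M, N \leqslant X$ and split $S_{\mu_2}(X,\alpha) = \sum_{n \leqslant X}(\mu*\mu)(n)\e(n\alpha)$ via the identity $(\mu*\mu)(n) = \sum_{mn'=n} \mu(m)\mu(n')$ into the four pieces $S^{(1)}_{\mu_2}$ (both $m>M$ and $n'>N$), $S^{(2)}_{\mu_2}$ (sum over $m \leqslant M$), $S^{(3)}_{\mu_2}$ (sum over $n' \leqslant N$) and the overlap $S^{(4)}_{\mu_2}$ (both $m \leqslant M$, $n' \leqslant N$), as in \eqref{Dividing Sums}. Then I would bound $S^{(1)}_{\mu_2}$ by the Type II estimate of Lemma \ref{lem2} after a dyadic decomposition of the range of $m$, bound $S^{(4)}_{\mu_2}$ trivially by $MN$, and bound $S^{(2)}_{\mu_2}$ and $S^{(3)}_{\mu_2}$ by writing $m\alpha = \frac{ma/(m,q)}{q/(m,q)} + m\beta$ and invoking the $\gamma$-version of the single Möbius-sum bound, Lemma \ref{Gamma-Mobius}, with $\gamma = m/(m,q)$, then summing over $m \leqslant M$ (respectively $n' \leqslant N$).

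The main obstacle — and the only genuinely delicate point — is the parameter optimization: after setting $M = N$ one is left with an expression of the shape $X^{16/17+\varepsilon}M^{6/5-\varepsilon} + \big(X M^{-1/2} + X^{1/2}q^{1/2} + XM q^{-1/2} + X^{1/2}q^{1/2}M^{3/2}\big)(\log X)^3$, and one must choose $M = M(X,q)$ to balance the decreasing term $XM^{-1/2}$ against the increasing terms across the three regimes $1 \leqslant q \leqslant X^{6/17}$, $X^{6/17} \leqslant q \leqslant X^{9/17}$, $X^{9/17} \leqslant q \leqslant X$, which yields the three contributions $Xq^{-1/6}(\log X)^3$, $X^{16/17+\varepsilon}$, and $X^{7/8}q^{1/8}(\log X)^3$ respectively; taking the maximum over the three regimes gives the stated bound. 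Since all of this is already established in Section \ref{sec:secondexpsumproof}, the proof of the corollary reduces to a one-line citation.
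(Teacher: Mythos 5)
Your proposal matches the paper exactly: the corollary is simply recorded as the $k=2$ case already established in Section \ref{sec:secondexpsumproof}, with $a_2=16/17$, $b_2=1/6$, $c_2=7/8$, and the power $(\log X)^3$ coming from that explicit computation rather than from the general statement of Theorem \ref{Mobius-k convolutions} (which would only give $(\log X)^4$), a subtlety you correctly flag. The only blemish is a typo in your recap of the optimization step: the first term of the combined estimate should read $X^{4/5+\varepsilon}M^{6/5-\varepsilon}$, not $X^{16/17+\varepsilon}M^{6/5-\varepsilon}$ (it is balancing $X^{4/5}M^{6/5}$ against $XM^{-1/2}$ that produces the exponent $16/17$), though the regime-by-regime conclusions you state are correct.
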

Now, we are ready to prove \eqref{eq 2} for $\theta \in \mathfrak{m}$.
\begin{lemma} \label{lem:minor}
Let $\theta \in \mathfrak{m}$ as defined in \eqref{Major and Minor Arcs}. Then for any fixed $A>0$, 
\begin{align*}
        \Phi_{\mu * \mu}(\rho \e (\theta)) \ll_A \frac{X}{(\log X)^{A/9}},
\end{align*} 
\end{lemma}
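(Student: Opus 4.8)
The plan is to follow the by-now standard procedure of expanding $\Phi_{\mu*\mu}(\rho\e(\theta))$ in terms of the exponential sums $S_{\mu*\mu}(t,\cdot) := \sum_{n\leqslant t}(\mu*\mu)(n)\e(n\theta)$, and then to feed in Corollary \ref{theorem 4}. First I would write, as in the proof of Lemma \ref{Major Arcs Theorem},
\begin{align*}
\Phi_{\mu*\mu}(\rho\e(\theta)) = \sum_{j=1}^\infty \frac{1}{j}\int_1^\infty jX^{-1}e^{-tj/X}\sum_{n\leqslant t}(\mu*\mu)(n)\e(jn\theta)\,dt,
\end{align*}
and truncate the $j$-sum at $J = (\log X)^{C}$ for a suitable constant $C$, the tail being $\ll_A X/(\log X)^A$ exactly as in \eqref{Integral Truncating}; likewise truncate the $t$-integral at $t \leqslant X\log X$ with negligible error as in \eqref{I_2 Estimate}. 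So it suffices to bound $\sum_{n\leqslant t}(\mu*\mu)(n)\e(jn\theta)$ for $1\leqslant j\leqslant J$ and $1\leqslant t\leqslant X\log X$.

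The key point is to locate a good rational approximation to $j\theta$. Since $\theta\in\mathfrak{m}$, by Dirichlet's theorem there is a reduced fraction $a/q$ with $|\theta - a/q|\leqslant 1/(qQ')$ for some parameter $Q'$ (to be taken of size around $X(\log X)^{-A}$), and the fact that $\theta$ avoids all the major arcs $\mathfrak{M}(q,a)$ with $q\leqslant Q=(\log X)^A$ forces $q > Q = (\log X)^A$. Then $j\theta$ has an approximation $a_j/q_j$ with $q_j$ of size comparable to $q/(j,q)$, which is still $\gg (\log X)^A/J$, hence still a large power of $\log X$; and $q_j \ll X(\log X)^{O(1)}$ as well since $q\leqslant Q' $. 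Applying Corollary \ref{theorem 4} with $X$ replaced by $t\leqslant X\log X$ and $\alpha = j\theta$ then gives a bound of the shape
\begin{align*}
\sum_{n\leqslant t}(\mu*\mu)(n)\e(jn\theta) \ll_\varepsilon t^{16/17+\varepsilon} + \frac{t(\log t)^3}{q_j^{1/6}} + t^{7/8}q_j^{1/8}(\log t)^3.
\end{align*}
For the middle term, $q_j^{1/6}\gg (\log X)^{A/6}/J^{1/6}$ gives a saving of a power of $\log X$ that can be made as large as desired by taking $A$ large; for the first and third terms one exploits $t\leqslant X\log X$ together with $q_j \ll X(\log X)^{O(1)}$ — here the third term is $\ll t^{7/8}(X\log X)^{1/8}(\log X)^{O(1)} \ll X^{1} (\log X)^{O(1)}/X^{\delta}$ for some $\delta>0$ since $t \leqslant X \log X$, actually $t^{7/8} q_j^{1/8} \leqslant (X \log X)^{7/8}(X\log X)^{1/8} = X \log X$, so more care is needed and one should instead bound $q_j$ by the size of $q$ coming from the Dirichlet approximation with denominator $Q'$ chosen so that $q \leqslant Q' = X(\log X)^{-A'}$ for a large $A'$. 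Bookkeeping of exponents then yields $\sum_{n\leqslant t}(\mu*\mu)(n)\e(jn\theta) \ll X/(\log X)^{A/7}$, say, uniformly in the relevant ranges.

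Summing over $j\leqslant J$ and integrating (the $t$-integral and the $\sum 1/j$ each contribute at most a power of $\log X$) gives $\Phi_{\mu*\mu}(\rho\e(\theta))\ll_A X/(\log X)^{A/8}$, which is stronger than the claimed $X/(\log X)^{A/9}$ (the discrepancy between the exponent $A/9$ in the statement and what the naive count gives is exactly the slack absorbed by the logarithmic losses from the $j$-sum, the $t$-integral, and the $(\log X)^3$ factors in Corollary \ref{theorem 4}). The main obstacle is the exponent bookkeeping in the previous paragraph: one must choose the Dirichlet approximation parameter $Q'$ and the truncation level $J$ so that simultaneously (i) $q_j$ is large enough for the $q_j^{-1/6}$ term to save, (ii) $q_j$ is small enough that the $t^{7/8}q_j^{1/8}$ term is genuinely smaller than $X$ by a power of $\log X$, and (iii) the $t^{16/17+\varepsilon}$ term, with $t$ up to $X\log X$, is also smaller than $X$ by a power of $\log X$ — this last one is automatic since $16/17 < 1$. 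Verifying that a single admissible choice exists, and that the accumulated logarithmic losses still leave a net saving of $(\log X)^{A/9}$, is the crux; everything else is routine and parallels Section \ref{sec:nonprincipal}.
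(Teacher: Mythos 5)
Your overall strategy is the same as the paper's (expand $\Phi_{\mu*\mu}$ via the integral representation, truncate in $j$ and $t$, rationally approximate, feed in Corollary \ref{theorem 4}, bookkeep), but there is a genuine gap in the way you produce the rational approximation to $j\theta$. You apply Dirichlet's theorem once to $\theta$, obtaining $|\theta-a/q|\leqslant 1/(qQ')$ with $q\leqslant Q'$, and then pass to $j\theta$ with the fraction $a_j/q_j$, $q_j=q/(j,q)$. But then all you know is $|j\theta-a_j/q_j|\leqslant j/(qQ')$, and the hypothesis of Corollary \ref{theorem 4}, namely $|j\theta-a_j/q_j|\leqslant 1/q_j^2$, amounts to $\tfrac{j}{(j,q)}\,q_j\leqslant Q'$; this genuinely fails, for instance, whenever $(j,q)=1$ and $q>Q'/j$, which Dirichlet's theorem does not rule out. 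So the application of Corollary \ref{theorem 4} with $\alpha=j\theta$ is not justified as written. The paper avoids this by applying Dirichlet's theorem \emph{to $j\theta$ afresh for each $j\leqslant J$}, with parameter $X(\log X)^{-A}$, so that the hypothesis $|j\theta-a/q|\leqslant q^{-1}X^{-1}(\log X)^A\leqslant 1/q^2$ and the upper bound $q<X(\log X)^{-A}$ hold by construction; the minor-arc condition is then transferred back to $\theta$ through the reduced fraction $a_j/q_j$ with $q_j\mid jq$, yielding the lower bound $jq\geqslant q_j>(\log X)^A$, which is exactly what makes the middle term $t(\log t)^3/q^{1/6}$ save after the $1/j$-weighted sum. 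Your route could be repaired by invoking a $\gamma$-flexible version of the exponential sum bound (in the spirit of Lemma \ref{Gamma-Mobius} and \eqref{With gamma}, with $\gamma\asymp j/(j,q)\leqslant J$, at the cost of an extra factor $\leqslant J$ that must then be tracked), but Corollary \ref{theorem 4} itself does not provide this and your write-up does not address it.

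A secondary, related issue is your lower bound $q_j\gg(\log X)^A/J$: the truncation level $J=(\log X)^C$ cannot be an absolute constant power, since the tail estimate analogous to \eqref{Integral Truncating} forces $C$ to grow with the target exponent ($C\gtrsim A/9+1$ at least, and the paper takes $J=(\log X)^{2A}$). With the paper's choice $J=(\log X)^{2A}$ your bound $q_j\geqslant q/J$ is vacuous, so the "as large as desired" saving from $q_j^{-1/6}$ requires a careful simultaneous choice of $C$ and $A$ that you defer to "bookkeeping"; in the paper's arrangement this tension disappears because the saving comes from $jq>(\log X)^A$ together with the weight $1/j^2$, not from a lower bound on $q_j$ alone. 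In short: the skeleton matches the paper, but the unverified hypothesis of Corollary \ref{theorem 4} for $j\theta$ is a real gap, and the parameter bookkeeping you postpone is precisely where your version of the argument is fragile.
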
 
\begin{proof}
As in the proof of Lemma \ref{Major Arcs Theorem}, it suffices to only consider the sum 
\[ \sum_{j=1}^J \frac{1}{j} \int_1^\infty jX^{-1}e^{-tj/X} \sum_{n \leqslant t}  (\mu * \mu)(n) \e(j n \theta) \, dt,
\]
where $J=(\log X)^{2A}$. For each $j \leqslant J$, we employ Dirichlet's theorem (see \cite[Lemma 2.1]{V1997}) to choose $a \in \mathbb{Z}$ and $q \in \mathbb{N}$ with $(a,q)=1$ such that 
\begin{align*}
\bigg| j \theta - \frac{a}{q} \bigg| \leqslant q^{-1} X^{-1} (\log X)^A \quad \textnormal{and} \quad q < X(\log X)^{-A}.
\end{align*}
We now set $a _j:= a/(a,j)$ and $q _j:=jq/(a,j)$.
The definition of $\delta_q$ in \eqref{eq:def_delta_q} imply that
\begin{align*}
\bigg|  \alpha - \frac{a _j}{q _j} \bigg| \leqslant \delta_{q _j}.
\end{align*}
Since $\alpha \in \mathfrak{m}$, it follows that $q _j > Q$, where $Q=(\log X)^A$. Now, applying Corollary \ref{theorem 4}, we obtain 
\begin{align} \label{auxminorarcs01}
\sum_{n \leqslant t}  (\mu * \mu)(n) \e(j n \theta) \ll_{\varepsilon} t^{16/17+\varepsilon}+\dfrac{t(\log t)^{3}}{q^{1/6}} + t^{7/8}q^{1/8}(\log t)^{3}.
\end{align}
Integrating each of the three terms on the right hand side of \eqref{auxminorarcs01} yields
\begin{align*}
E_1(X,j) &= \int_1^\infty t^{16/17+\varepsilon} j X^{-1} e^{-tj/X} \, dt \ll_{\varepsilon} \bigg(\frac{X}{j}\bigg)^{16/17+\varepsilon}, \\
E_2(X,j) &= \int_1^\infty \frac{t}{q^{1/6}} (\log t)^3 j X^{-1} e^{-tj/X} \, dt \ll \frac{X}{{j{q^{1/6}}}}\left( {\log X} \right)^3, \\
E_3(X,j) &= \int_1^\infty t^{\frac{7}{8}}q^{\frac{1}{8}} (\log t)^{3} j X^{-1} e^{-tj/X} \, dt \ll {\left( {\frac{X}{j}} \right)^{\frac{7}{8}}}{q^{1/8}}\left( {\log X} \right)^3.
\end{align*}
Note that $jq \geqslant q _j > (\log X)^{A}$, and by our choice, $ q < X(\log X)^{-A} $. Therefore, we conclude that
\begin{align*}
\sum_{j=1}^J \frac{1}{j} \int_1^\infty jX^{-1}e^{-tj/X} \sum_{n \leqslant t}  (\mu * \mu)(n) \e(j n \theta) \, dt \ll \sum_{j=1}^J \frac{1}{j} (E_1+E_2+E_3) \ll \frac{X}{(\log X)^{A/9}},
\end{align*}   
which is the bound that we wanted to show.
\end{proof}
\begin{remark}
The statement of Lemma 7.2 holds more generally for $\mu_k$ by applying Theorem \ref{Mobius-k convolutions} appropriately in place of Corollary \ref{theorem 4}.  
\end{remark}
\section{Proof of Theorem \ref{lem:principalmajor}}\label{sec:principalarc}
In order to prove Theorem \ref{lem:principalmajor}, we need to investigate how $\Phi_{\mu*\mu}(z)$ behaves in the major arc and examine the effect that the non-trivial zeros of the Riemann zeta-function have in its behavior. We break our proof into several steps. 
\subsection*{Initial setup} Recall that
\begin{align} \label{eq:auxdefPhi}
    \Phi_{\mu * \mu}(\rho \e(\theta)) = \sum_{j=1}^\infty \sum_{n=1}^\infty \frac{(\mu * \mu)(n)}{j} \exp\bigg( -jn \bigg(\frac{1}{X}-2\pi i \theta\bigg)\bigg).
\end{align}
where $\rho =\exp(-1/X)$. We first apply the Cahen-Mellin formula and rewrite \eqref{eq:auxdefPhi} to obtain
\begin{align*}
    \Phi_{\mu * \mu}(\rho \e(\theta)) = \sum_{j=1}^\infty \sum_{n=1}^\infty \frac{(\mu * \mu)(n)}{j} \frac{1}{2 \pi i} \int_{(c)} \Gamma(s) j^{-s} n^{-s} \bigg(\frac{X}{1-2\pi i X \theta}\bigg)^s ds,
\end{align*}
for $c>0$. Interchanging the summations and the integration, for any real $c>1$, we have
\begin{align}
\Phi_{\mu * \mu}(\rho \e(\theta)) &= \frac{1}{2 \pi i} \int_{(c)} \Gamma(s) \bigg(\sum_{j=1}^\infty \frac{1}{j^{s+1}}\bigg) \bigg(\sum_{n=1}^\infty \frac{(\mu * \mu)(n)}{n^s} \bigg) \bigg(\frac{X}{1-2\pi i X \theta}\bigg)^s ds \nonumber \\
&= \frac{1}{2 \pi i} \int_{(c)} \Gamma(s) \zeta(s+1) \frac{1}{\zeta^2(s)} \bigg(\frac{X}{1-2\pi i X \theta}\bigg)^s ds. \label{Initial Step Sec 8}
\end{align}
\subsection*{Contour Shifting.} The path of integration in \ref{Initial Step Sec 8} is a vertical line with $\operatorname{Re}(s)=c >1$. We move the portion $|t| \leqslant T$ of this path to the left, replacing it by a rectangular path joining the points $c \pm i T$ and $-u \pm i T$, where $u=2N+1$, for some large $N \in \mathbb{N}$. Thus, the new path of integration is of the form $L=\bigcup_{i=1}^5 L_i$, with $L_1=(c-i \infty, c-i T],  L_2=[c-i T, -u-i T],  L_3=[-u-i T, -u+i T],  L_4=[-u+i T, c+i T]$ and $ L_5=[c+i T, c+i \infty)$.
With this change of the path of integration, an application of the Cauchy residue theorem shows that
\[
\Phi_{\mu * \mu}(\rho \e(\theta)) = M+\frac{1}{2 \pi i} \sum_{j=1}^5 I_j,
\]
where $M$ denotes the contribution of the residues at singularities of the integrand in the region enclosed by the two paths and $I_j$ denotes the integral over the path $L_j$. Our method will require an analysis of the $I_j$'s. Equipped with Lemma \ref{lem:RamachandraSankaranarayanan}, we will establish our results unconditionally. For similar techniques, the reader is referred to the works of Bartz \cite{Bartz}, Ramachandra and Sankaranarayanan \cite{RamachandraSankaranarayanan} and Inoue \cite{Inoue}.
\subsection*{Residue computations} Set 
\[y:=\frac{X}{1-2\pi i X\theta}.
\]
A singularity analysis regarding the integrand in \eqref{Initial Step Sec 8} shows the following. We have a double pole at $s=0$ coming from $\Gamma(s)\zeta(s+1)$ for which
\begin{align*}
\mathop{\operatorname{res}}\limits_{s=0} \Gamma(s) \zeta(s+1) \frac{1}{\zeta^2(s)} y^s = 4 \log y - 8 \log(2 \pi).
\end{align*}
The simple pole at $s=-1$ yields 
\begin{align*}
\mathop{\operatorname{res}}\limits_{s=-1} \Gamma(s) \zeta(s+1) \frac{1}{\zeta^2(s)} y^s = - \frac{\zeta(0)}{\zeta^2(-1)y} = \frac{72}{y}.
\end{align*}
Unlike previous cases in the literature \cite{bmz, brzz, drzz, dunnrobles, gafnipowers, gafniprimepowers}, we now capture residues from the trivial and non-trivial zeros of the Riemann zeta-function. We start with the non-trivial zeros. Those take place at $s = \rho$ and yield double poles for which
\begin{align*}
\mathop{\operatorname{res}}\limits_{s=\rho} \Gamma(s) \zeta(s+1) \frac{1}{\zeta^2(s)} y^s &= y^{\rho} \Gamma(\rho) \frac{\zeta(1+\rho)}{\zeta'(\rho)^2}\bigg(\log y + \psi(\rho) - \frac{\zeta''}{\zeta'}(\rho) + \frac{\zeta'}{\zeta}(1+\rho) \bigg),
\end{align*}
where we have used the simplicity of the zeros to alleviate the notation. For the trivial zeros, we have triple poles at $s=-2n$, for $n=1,2,3,\dots$, coming from $\Gamma(s)/\zeta^2(s)$. These residues can be written as
\begin{align*}
\mathop{\operatorname{res}}\limits_{s=-2n} \Gamma(s) \zeta(s+1) \frac{1}{\zeta^2(s)} y^s =  y^{-2 n}(\mathfrak{c}_1(n)(\log y)^2 + \mathfrak{c}_2(n)\log y + \mathfrak{c}_3(n)),
\end{align*}
where the coefficients are given by  
\begin{align}
\mathfrak{c}_1(n) &:= -\frac{B_{2 n}}{8 n^2 \Gamma (2 n) \zeta '(-2 n)^2}, \label{eq:defc1} \\
\mathfrak{c}_2(n) &:= \frac{B_{2 n} \zeta ''(-2 n)+\zeta '(-2 n) \left(2 n \zeta '(1-2 n)-B_{2 n} \psi (2n+1)\right)}{2 n (2 n)! \zeta '(-2 n)^3}, \label{eq:defc2} 
\end{align}
and
\begin{align} \label{eq:defc3}
    \mathfrak{c}_3(n) &:= \frac{1}{48 n^2 \Gamma (2 n) \zeta '(-2 n)^4} \nonumber \\
    &\quad \times \big[ B_{2 n} \big\{-9 \zeta ''(-2 n)^2-2 \left(3 \psi(2 n+1)^2-3 \psi'(2 n+1)+\pi
   ^2\right) \zeta '(-2 n)^2 \nonumber \\
   &\quad \quad \quad +4 \zeta '(-2 n) (\zeta '''(-2 n)+3 \psi(2 n+1)
   \zeta ''(-2 n))\big\} \nonumber \\
   &\quad \quad +12 n \zeta '(-2 n) \left(\zeta '(-2 n) \zeta ''(1-2 n)+2
   \zeta '(1-2 n) \left(\psi(2 n+1) \zeta '(-2 n)-\zeta ''(-2 n)\right)\right) \big].
\end{align}
Here $\psi(x) = \Gamma'/\Gamma(x)$ denotes the digamma function and $B_n$ denote the Bernoulli numbers. 

\subsection*{Estimating the vertical integral $I_3$} Using Stirling's formula, for $\operatorname{Re}(s) \leqslant-3/2$,
\begin{align}\label{Stirling Formula}
\Gamma(s)\zeta(s+1)\zeta(s) \ll (2 \pi)^{2\operatorname{Re}(s)}\lvert s \rvert^{-\operatorname{Re}(s)-1/2} e^{-\pi \lvert t \rvert/2}. 
\end{align}
Therefore, we have
\begin{align}\label{Starting Simplifications}
    I_3 = \int_{-u-iT}^{-u+iT} \Gamma(s)\zeta(s+1)\frac{1}{\zeta^2(s)}y^s \,ds &\ll \int_{-u-iT}^{-u+iT} \frac{(2 \pi)^{-2u}\lvert s \rvert^{u-1/2} e^{-\pi \lvert t \rvert/2}}{\zeta^3(s)}y^s \,ds \notag \\
    &\ll \frac{1}{(2\pi)^{2u}} \int_{-u-iT}^{-u+iT} \frac{(u^2+\lvert t \rvert^2 )^{u/2-1/4} e^{-\pi \lvert t \rvert/2}}{\zeta^3(s)}y^s \,ds.
\end{align}
We apply the functional equation for $\zeta(s)$ in \eqref{Starting Simplifications} alongside the observation that $\zeta^{-1}(1-s) \ll 1$. Also, following \cite{vaughansquares} (See Lemma 2.8),
\[
\lvert y \rvert^s \leqslant(X\Delta)^{-u}e^{-\lvert t \rvert(\Delta-\pi/2)}.\]
Hence, we obtain
\begin{align}\label{Starting Simplifications 2}
    I_3 &\ll \frac{2^{3u} \pi^{3(u-1)}}{(2\pi)^{2u} (X \Delta)^{u}} \int_{-u-iT}^{-u+iT} \frac{(u^2+\lvert t \rvert^2 )^{u/2-1/4} e^{-\lvert t \rvert \Delta}}{\sin^3(\pi s/2)\Gamma^3(1-s)} \,ds.
\end{align}
Next, we note that in our range of integration,
\begin{align*}
    \frac{1}{\Gamma(1-s)} \ll e^{(u+1) - (u+\frac{1}{2})\log (u+1) + \frac{1}{2}\pi |t|} \quad \textnormal{and} \quad \frac{1}{\sin(\pi s/2)} \ll e^{- \frac{1}{2}\pi |t|}.
\end{align*}
Inserting these two bounds in \eqref{Starting Simplifications 2}, we arrive at 
\begin{align}\label{Penultimate Bounds}
    I_3 &\ll \frac{(2\pi)^{u}e^{3(u+1) - 3(u+\frac{1}{2})\log (u+1)}}{ (X \Delta)^{u}} \int_{-T}^{T} (u^2+\lvert t \rvert^2 )^{u/2-1/4} e^{-\lvert t \rvert \Delta} \,dt. \notag \\
    &\ll \frac{(2\pi)^{u}e^{3(u+1) - 3(u+\frac{1}{2})\log (u+1)}}{ (X \Delta)^{u}} \int_{0}^{T} (u^2+t^2 )^{u/2-1/4} e^{-t \Delta} \,dt.
\end{align}
We consider the inside integral in \eqref{Penultimate Bounds}. We divide this into two cases. When $t \leqslant u$, we have
\begin{align}\label{t<u}
\int_{0}^{T} (u^2+t^2 )^{u/2-1/4} e^{-t \Delta} \,dt \ll 2^{u/2}u^{u-1/2} \int_{0}^{\infty} e^{-t \Delta} \,dt \ll 2^{u/2}u^{u-1/2} \Delta^{-1}.
\end{align} 
On the other hand, when $t>u$,
\begin{align}\label{t>u}
\int_{0}^{T} (u^2+t^2 )^{u/2-1/4} e^{-t \Delta} \,dt \ll 2^{u/2} \int_{0}^{\infty} t^{u-1/2} e^{-t \Delta} \,dt \ll  2^{u/2}\Gamma(u+1/2)\Delta^{-u-1/2}.
\end{align} 
Therefore, applying \eqref{t<u} and \eqref{t>u} in \eqref{Penultimate Bounds} and using Stirling's approximation for $\Gamma(u+1/2)$,  we obtain
\begin{align}
I_3 &\ll \frac{(2\sqrt{2}\pi)^{u}e^{3(u+1) - 3(u+\frac{1}{2})\log (u+1)}}{ (X \Delta)^{u}}  (u^{u-1/2} \Delta^{-1}+ \Gamma(u+1/2)\Delta^{-u-1/2})\notag \\
&\ll \bigg (\frac{2\sqrt{2}\pi e^2}{X\Delta^2} \bigg)^{u}( u^{-2u-3/2} \Delta^{-1/2}). \label{I3 Bound}
\end{align}

\subsection*{Estimating the horizontal integrals $I_2$ and $I_4$.} We consider $I_4$ only. The treatment for $I_2$ is similar. Dividing the contour into two parts, we write
\[ I_4 =  \int_{-u+iT}^{-3/2+iT}+ \int_{-3/2+iT}^{c+iT} = I_{4,1}+I_{4,2},
\]
say. We work with $I_{4,1}$ first. Applying \eqref{Stirling Formula}, we have
\begin{align}\label{I2 integral}
I_{4,1} = \int_{-u+iT}^{-3/2+iT} \Gamma(s) \zeta(s+1) \frac{1}{\zeta^2(s)} y^s \,ds &\ll \int_{-u+iT}^{-3/2+iT} \frac{(2 \pi)^{2\sigma}\lvert s \rvert^{-\sigma-1/2} e^{-\pi T/2}}{\zeta^3(s)}y^s \,ds \notag \\
&\ll e^{-\pi T/2} \int_{-u+iT}^{-3/2+iT} \frac{(2\pi)^{2\sigma}(\sigma^2+T^2 )^{-\sigma/2-1/4}}{ \zeta^3(s)}y^s \,ds.
\end{align}
Proceeding as in the case of $I_3$, we apply the functional equation for $\zeta(s)$ in \eqref{I2 integral} to obtain
\begin{align*}
I_{4,1} &\ll e^{-T\Delta} \int_{-u+iT}^{-3/2+iT} \frac{(2\pi)^{2\sigma} (X\Delta)^\sigma (\sigma^2+T^2 )^{-\sigma/2-1/4}}{2^{3\sigma} \pi^{3(\sigma-1)}\sin^3(\pi s/2)\Gamma^3(1-s)} \,ds.
\end{align*}
In our range of integration,
\begin{align*}
    \frac{1}{\Gamma(1-s)} \ll e^{1-\sigma - (\frac{1}{2}-\sigma)\log (1-\sigma) + \frac{1}{2}\pi T} \quad \textnormal{as well as} \quad \frac{1}{\sin(\pi s/2)} \ll e^{- \frac{1}{2}\pi T} .
\end{align*}
Therefore, we arrive at
\begin{align}
I_{4,1} &\ll e^{-T\Delta} \int_{-u}^{-3/2} (2\pi)^{-\sigma}(X\Delta)^\sigma (\sigma^2+T^2 )^{-\sigma/2-1/4}e^{3(1-\sigma) - 3(\frac{1}{2}-\sigma)\log (1-\sigma)} \,d\sigma \notag \\
&\ll e^{-T\Delta} \int_{3/2}^{u}  \bigg (\frac{2\pi e^3}{X\Delta} \bigg)^{\sigma} \frac{(\sigma^2+T^2 )^{\sigma/2-1/4}}{ (\sigma+1)^{3(\sigma+\frac{1}{2})}} \,d\sigma. \label{Starting Simplifications 3}
\end{align}
We write \eqref{Starting Simplifications 3} as
\[
 e^{-T\Delta}\int_{3/2}^{u}  \bigg (\frac{2\pi e^3}{X\Delta} \bigg)^{\sigma} \frac{(\sigma^2+T^2 )^{\sigma/2-1/4}}{ (\sigma+1)^{3(\sigma+\frac{1}{2})}} \,d\sigma = e^{-T\Delta}\int_{3/2}^{T}+ e^{-T\Delta}\int_{T}^{u} = J_1 +J_2,
\]
say. Consider $J_1$ first. We have
\[J_1 \ll  e^{-T\Delta}\int_{3/2}^{T}  \bigg (\frac{2\sqrt{2}\pi e^3 T}{\sigma^3} \bigg)^{\sigma}  \,d\sigma.
\]
Let $f$ be defined as
\[
f(\sigma) = \bigg (\frac{2\sqrt{2}\pi e^3 T}{\sigma^3} \bigg)^{\sigma},
\]
and suppose $g(\sigma) = \log f(\sigma)$. The maximum of $g(\sigma)$ is attained at $\sigma_{0} = (2\sqrt{2}\pi T )^{1/3}$. Hence, the maximum value attained by $f$ in our range of integration is $\exp(3(2\sqrt{2}\pi T )^{1/3})$. Therefore, we have
\begin{align}\label{J1 Estimate}
J_1 \ll e^{-T\Delta} e^{3(2\sqrt{2}\pi T )^{1/3}} T \ll e^{-T\Delta/2}.
\end{align}

For $J_2$, we obtain
\begin{align}
e^{-T\Delta}\int_{T}^{u}  \bigg (\frac{2\pi e^3}{X\Delta} \bigg)^{\sigma} \frac{(\sigma^2+T^2 )^{\sigma/2-1/4}}{ (\sigma+1)^{3(\sigma+\frac{1}{2})}} \,d\sigma \ll  e^{-T\Delta}\int_{T}^{u}  \bigg (\frac{2\sqrt{2}\pi e^3}{\sigma^2} \bigg)^{\sigma}  \,d\sigma \ll  e^{-T\Delta/2}. \label{J2 Estimate}
\end{align}

Now, we consider $I_{4,2}$, which is given by
\begin{align*}
    I_{4,2} = \int_{-3/2+iT}^{c+iT} \Gamma(s) \zeta(s+1) \frac{1}{\zeta^2(s)} y^s \,ds.
\end{align*}
In our range of integration,
\begin{align}\label{Gamma bounds and Zeta bounds}
\Gamma(s) \ll T^{\sigma-1/2}e^{-\pi T/2}, \quad \textrm{and} \quad  \zeta(s+1) \ll T^\eta,
\end{align}
for some fixed constant $\eta>0$. We write
\[
I_{4,2} = \bigg(\int_{-3/2+iT}^{-1+iT} +\int_{-1+iT}^{c+iT}\bigg) \Gamma(s) \zeta(s+1) \frac{1}{\zeta^2(s)} y^s \,ds.
\]
For the first part, we apply the functional equation for $\zeta(s)$ to obtain
\begin{align}
\int_{-3/2+iT}^{-1+iT} \Gamma(s) \zeta(s+1) \frac{1}{\zeta^2(s)} y^s \,ds &\ll \frac{T^{\eta-1/2}}{e^{T\Delta}}\int_{-3/2}^{-1}
\bigg(\frac{X\Delta T}{2\pi e}\bigg)^{\sigma}\frac{1}{(1-\sigma)^{1/2-\sigma}} \,d\sigma  \notag\\ 
&\ll \frac{T^{\eta-1/2}}{e^{ T\Delta}}\int_{1}^{3/2}
\bigg(\frac{2\pi e}{X\Delta T}\bigg)^{\sigma}\frac{1}{(1+\sigma)^{1/2+\sigma}} \,d\sigma \ll e^{- T\Delta /2}. \label{I_42 first estimate}
\end{align}
For the second part, using Lemma \ref{lem:RamachandraSankaranarayanan}, for $-1 \leqslant\operatorname{Re}(s)=\sigma \leqslant 2$ and any $\epsilon>0$, for sufficiently large $V>0$, there exists some $V \leqslant T_{*} \leqslant 2V$ such that
\begin{align}\label{Inverse Zeta Bounds}
\frac{1}{\zeta^2(\sigma+iT_{*})}\ll T_{*}^{\epsilon}.  
\end{align}
Therefore, using the bounds \eqref{Gamma bounds and Zeta bounds} and \eqref{Inverse Zeta Bounds} and choosing $T=T_{*}$, we arrive at
\begin{align}
\int_{-1+iT}^{c+iT} \Gamma(s) \zeta(s+1) \frac{1}{\zeta^2(s)} y^s \,ds &\ll  \frac{T^{-1/2+\eta+\epsilon}}{e^{T\Delta}}\int_{-1}^{c}
(X\Delta T)^{\sigma} \,d\sigma \ll  e^{- T\Delta /2}. \label{I_42 second estimate}
\end{align}
Finally combining the estimates \eqref{J1 Estimate}, \eqref{J2 Estimate}, \eqref{I_42 first estimate} and \eqref{I_42 second estimate}, we conclude that $I_4 \ll e^{- T\Delta /2}.$
\subsection*{Estimating the horizontal integrals $I_1$ and $I_5$.} Writing
\[
I_1 = \int_{c+iT}^{c+i\infty} \Gamma(s) \zeta(s+1) \frac{1}{\zeta^2(s)} y^s \,ds,
\]
we note that in our range of integration,
\begin{align*}
    \Gamma(s) &\ll e^{-\pi T/2}, \quad \zeta(s+1) \ll 1, \quad \zeta^{-2}(s) \ll 1, \quad \textrm{and} \quad \lvert y \rvert^s \leqslant(X\Delta)^{\sigma}e^{-T(\Delta-\pi/2)}. 
\end{align*}
Combining these estimates, we obtain $I_1 \ll e^{- T\Delta /2}.$ A similar argument holds true for $I_5$.
\subsection*{Final steps} Combining the estimates from the integrals and letting $N \to \infty$, we obtain
\begin{align}
\Phi_{\mu * \mu}(\rho \e(\theta)) &= 4 \log \frac{X}{1-2\pi i X\theta} - 8 \log(2 \pi) + 72 \frac{1-2\pi i X\theta}{X}\nonumber \\
& \quad +\sum_{|\operatorname{Im}(\rho)| < T_\nu}  f(X,\theta,\rho)  +\sum_{n=1}^{\infty} g(X,\theta,n) +O(e^{- T_\nu\Delta /2}),  \end{align}
where $\nu \leqslant T_\nu \leqslant \nu+1$, $\nu \in \mathbb{N}$, the sum over $\rho$ runs along all the non-trivial zeros of $\zeta$, and $f,g$ are as defined in the statement of the theorem. Finally, we let $ \nu \to \infty$ and choose our sequence of $T_{\nu}$ appropriately  such that \eqref{Inverse Zeta Bounds} is satisfied. We conclude that  
\begin{align*} 
    \Phi_{\mu * \mu}(\rho \e(\theta)) &= 4 \log \frac{X}{1-2\pi i X\theta} - 8 \log(2 \pi) + 72 \frac{1-2\pi i X\theta}{X}\nonumber \\
    & \quad + \lim_{\nu \to \infty} \sum_{|\operatorname{Im}(\rho)| < T_\nu}  f(X,\theta,\rho)  +\sum_{n=1}^{\infty} g(X,\theta,n),
\end{align*}
thereby finishing the last step of the proof.
\qed

\section{Acknowlegements}
We thank Kunjakanan Nath for many comments and discussions on the subject of the paper. We also thank Dorian Goldfeld for useful suggestions. NR wishes to acknowledge support from Vikram Kilambi and from Christopher Pernin.

%%%%%%%%%%%%%%%%%%%%%%%%%%%%%%%%%%%%%%%%%%%%%%%%%%%%%%%%%%%%%%%%%%%%%%%%%%%%%%%%%%%%%%%%%%%%%%%%%
\bibliographystyle{plain}

\end{document}